 \newenvironment{dedication}
 {\vspace{6ex}\begin{quotation}\begin{center}\begin{em}}
        {\par\end{em}\end{center}\end{quotation}}
\DeclarePairedDelimiter{\ceil}{\lceil}{\rceil}
\setlist[enumerate]{nosep}
\definecolor{labelkey}{rgb}{0,0.08,0.45}
\definecolor{refkey}{rgb}{0,0.6,0.0}
\definecolor{Brown}{rgb}{0.45,0.0,0.05}
\definecolor{lime}{rgb}{0.00,0.8,0.0}
\definecolor{lblue}{rgb}{0.5,0.5,0.99}
\newcommand{\mychoose}[2]{\bigl({{#1}\atop#2}\bigr)}
\newcommand{\sepp}{\setlength{\itemsep}{-2pt}}
\newcommand{\aref}[1]{\hyperref[#1]{Appendix~\ref{#1}}}
\newcommand*{\tran}{^{\mkern-1.5mu\mathsf{T}}}
\newcommand{\nnn}{\ensuremath{{n\in{\mathbb N}}}}
\newcommand{\menge}[2]{\big\{{#1}~\big |~{#2}\big\}}
\newcommand{\Menge}[2]{\left\{{#1}~\Big|~{#2}\right\}}
\newcommand{\fenv}[1]%
{\ensuremath{\,\overrightarrow{\operatorname{env}}_{#1}}}
\newcommand{\benv}[1]%
{\ensuremath{\,\overleftarrow{\operatorname{env}}_{#1}}}
\newcommand{\infconv}{\ensuremath{\mbox{\small$\,\square \,$}}}
\newcommand{\exi}{\ensuremath{\exists\,}}
\newcommand{\RR}{\ensuremath{\mathbb R}}
\newcommand{\RP}{\ensuremath{\mathbb{R}_+}}
\newcommand{\NN}{\ensuremath{\mathbb N}}
\providecommand{\BB}[2]{\operatorname{ball}(#1;#2)}
\newcommand{\dom}{\ensuremath{\operatorname{dom}}}
\newcommand{\argmin}{\ensuremath{\operatorname{Argmin}}}
\newcommand{\sri}{\ensuremath{\operatorname{sri}}}
\newcommand{\ran}{\ensuremath{\operatorname{ran}}}
\newcommand{\zer}{\ensuremath{\operatorname{zer}}}
\newcommand{\Id}{\ensuremath{\operatorname{Id}}}
\newcommand{\Mbeta}[1]{M_{[#1]}}
\newcommand{\veet}{\ensuremath{{\scriptscriptstyle\vee}}} % vee tiny
\crefname{equation}{}{equations}
\crefname{chapter}{Appendix}{chapters}
\crefname{item}{}{items}
\newtheorem{theorem}{Theorem}[section]
\newtheorem{lem}[theorem]{Lemma}
\newtheorem{cor}[theorem]{Corollary}
\newtheorem{proposition}[theorem]{Proposition}
\newtheorem{prop}[theorem]{Proposition}
\newtheorem{defn}[theorem]{Definition}
\newtheorem{thm}[theorem]{Theorem}%[section]
\newtheorem{example}[theorem]{Example}
\newtheorem{ex}[theorem]{Example}
\newtheorem{fact}[theorem]{Fact}
\newtheorem{remark}[theorem]{Remark}
\newtheorem{rem}[theorem]{Remark}
\def\endproof{\ensuremath{\hfill \quad \blacksquare}}
\providecommand{\siff}{\Leftrightarrow}
\providecommand{\ds}{\displaystyle}
\providecommand{\abs}[1]{\lvert#1\rvert}
\providecommand{\norm}[1]{\lVert#1\rVert}
\providecommand{\Norm}[1]{\Big\lVert#1\Big\rVert}
\providecommand{\normsq}[1]{\lVert#1\rVert^2}
\providecommand{\bk}[1]{\left(#1\right)}
\providecommand{\stb}[1]{\left\{#1\right\}}
\providecommand{\innp}[1]{\langle#1\rangle}
\providecommand{\LA}{\Leftarrow}
\providecommand{\RA}{\Rightarrow}
\providecommand{\grad}{\nabla}
\providecommand{\lam}{\lambda}
\providecommand{\RR}{\mathbb{R}}
\providecommand{\ran}{\operatorname{ran}}
\providecommand{\dom}{\operatorname{dom}}
\newcommand{\fix}{\ensuremath{\operatorname{Fix}}}
\providecommand{\bdry}{\operatorname{bdry}}
\providecommand{\parl}{\operatorname{par}}
\providecommand{\gra}{\operatorname{gra}}
\providecommand{\Id}{\operatorname{{ Id}}}
\providecommand{\fady}{\varnothing}
\providecommand{\argmin}{\mathrm{arg}\!\min}
\providecommand{\rras}{\rightrightarrows}
\providecommand{\NN}{\mathbb{N}}
\providecommand{\BB}[2]{\operatorname{ball}(#1;#2)}
\providecommand{\fix}{\operatorname{Fix}}
\providecommand{\ran}{\operatorname{ran}}
\providecommand{\Id}{\operatorname{Id}}
\providecommand{\pt}{{\partial}}
\providecommand{\zer}{\operatorname{zer}}
\providecommand{\DR}{\operatorname{DR}}
\providecommand{\infconv}{\ensuremath{\mbox{\footnotesize$\,\infconv \,$}}}
\providecommand{\vee}{\ensuremath{{\scriptscriptstyle\vee}}} % vee tiny
\providecommand{\TDR}[1][w,]{T_{{#1}{\DR}}}
\providecommand{\fady}{\varnothing}
\providecommand{\RR}{\mathbb{R}}
\providecommand{\NN}{\mathbb{N}}
\providecommand{\linop}{L}
\providecommand{\DR}{\operatorname{DR}}
\def\namedlabel#1#2{\begingroup
   \def\@currentlabel{#2}%
   \label{#1}\endgroup
}
\definecolor{myblue}{rgb}{.8, .8, 1}
  \newcommand*\mybluebox[1]{%
    \colorbox{myblue}{\hspace{1em}#1\hspace{1em}}}
\begin{document}
%-------------------------------------------------------------------------

\title{\textsc
Affine nonexpansive operators,
Attouch-Th\'{e}ra duality
and the Douglas-Rachford algorithm}

\author{
Heinz H.\ Bauschke\thanks{
Mathematics, University
of British Columbia,
Kelowna, B.C.\ V1V~1V7, Canada. E-mail:
\texttt{heinz.bauschke@ubc.ca}.},
~~Brett Lukens\thanks{3990 Lansdowne Road, Armstrong, B.C.\ V0E~1B3, 
Canada. E-mail: 
\texttt{brjl94@gmail.com.}}
~~and Walaa M.\ Moursi\thanks{
Mathematics, University of British Columbia, Kelowna, B.C.\ V1V~1V7, Canada,
and 
Mansoura University, Faculty of Science, Mathematics Department, 
Mansoura 35516, Egypt. 
E-mail: \texttt{walaa.moursi@ubc.ca}.}}

\date{March 30, 2016}

\maketitle

\begin{dedication}
\vspace{-2.5 cm}
{In tribute to Michel Th\'era on his 70th birthday}
\end{dedication}

\begin{abstract}
\noindent
The Douglas-Rachford splitting algorithm was originally proposed
in 1956 to solve a system of linear equations arising from the
discretization of a partial differential equation.
In 1979, Lions and Mercier brought forward a very powerful
extension of this method suitable to solve optimization problems. 

In this paper, we revisit the original affine setting.
We provide a powerful convergence result for finding a zero of
the sum of two maximally monotone affine relations. As a by
product of our analysis, we obtain results concerning the
convergence of iterates of affine nonexpansive mappings as well
as Attouch-Th\'era duality. 
Numerous examples are presented. 

\end{abstract}
{\small
\noindent
{\bfseries 2010 Mathematics Subject Classification:}
{Primary 
47H05, %Monotone operators and generalizations
47H09, %Contraction-type mappings, nonexpansive mappings, $A$-proper mappings,
49M27; %Decomposition Methods
Secondary 
49M29, %Methods involving duality etc.
49N15, %Duality theory
90C25. 
}

\noindent {\bfseries Keywords:}
affine mapping,
Attouch-Th\'era duality, 
Douglas-Rachford algorithm,
linear convergence, 
maximally monotone operator,
nonexpansive mapping,
paramonotone operator,
strong convergence,
Toeplitz matrix, 
tridiagonal matrix.
}

\section{Introduction}

Throughout this paper
\begin{empheq}[box=\mybluebox]{equation*}
\label{T:assmp}
X \text{~~is a real Hilbert space},
\end{empheq} 
with inner product $\innp{\cdot,\cdot}$ and
induced norm $\norm{\cdot}$. 
A central problem in optimization is to 
\begin{equation}
\label{e:sumprob}
\text{find $x\in X$ such that $0\in (A+B)x$,}
\end{equation}
where $A$ and $B$ are maximally monotone operators on $X$;
see, e.g., 
\cite{BC2011}, 
\cite{Borwein50}, 
\cite{Brezis}, 
\cite{BurIus},
\cite{Comb96},
\cite{Simons1},
\cite{Simons2},
\cite{Rock98},
\cite{Zeidler2a},
\cite{Zeidler2b}, and the references therein.
As Lions and Mercier observed in the their landmark paper
\cite{L-M79}, one may iteratively solve
the \emph{sum problem} \eqref{e:sumprob} by the
celebrated \emph{Douglas-Rachford splitting algorithm} 
(see also \cite{EckBer}).
This algorithm proceeds by iterating the operator
$T = \Id-J_A + J_BR_A$; the sequence $(J_AT^nx)_\nnn$ converges
to a solution of \eqref{e:sumprob} (see \cref{s:DRA} for details). 
The Douglas-Rachford algorithm was originally proposed in 1956 by
Douglas and Rachford \cite{DR56}. 
It can be viewed as a method for solving a system of linear
equations where the underlying coefficient matrix is positive
definite. 
The far-reaching extension to optimization 
provided by Lions and Mercier \cite{L-M79} is not at all obvious (for
the sake of completeness, we sketch this 
connection in the Appendix). 

In this paper, we concentrate on the affine setting.
In the original setting considered by Douglas and Rachford,
the operators $A$ and $B$ correspond to positive definite
matrices.
We extend this result in various directions. Indeed,
we obtain \emph{strong convergence} in possibly
\emph{infinite-dimensional} Hilbert space; the operators $A$ and
$B$ may be \emph{affine maximally monotone relations}, and we also
identify the \emph{limit}. 
The remainder of this paper is organized as follows.
In \cref{nexp:f:nexp}, we provide several results which will be
useful in the derivation of the main results. 
A new characterization of strongly convergent iterations of
affine nonexpansive operators 
(\cref{Thm:asym:Lin:Af}) is presented in \cref{itaff}. 
We also discuss when the convergence is linear.
In \cref{s:AT}, 
we obtain new results, which are formulated using the Douglas-Rachford
operator, on the relative geometry of 
the primal and dual (in the sense of Attouch-Th\'era duality) 
solutions to \eqref{e:sumprob}. 
The main algorithmic result (\cref{cor:DR}) is derived in 
\cref{s:DRA}. It provides precise
information on the behaviour of the Douglas-Rachford 
algorithm in the affine case. 
Numerous examples are presented in \cref{s:ex} where we also pay
attention to the
tridiagonal Toeplitz matrices and Kronecker products. 
In the Appendix, we sketch the connection between the historical
Douglas-Rachford algorithm and the powerful extension provided by
Lions and Mercier. 

Finally, the notation we employ is quite standard and follows
largely \cite{BC2011}. 
Let $C$ be a nonempty closed convex subset of 
$X$. We use $N_C$ and $P_C$ to denote 
the \emph{normal cone operator} and the \emph{projector}
associated with $C$, respectively.
Let $Y $ be a Banach space. We shall use 
$\mathcal{B}(Y)$ to denote the set of \emph{bounded
linear operators} on $Y$. Let $L\in \mathcal{B}(Y)$.
The \emph{operator norm} of $L$ is
 $\norm{L}=\sup_{\norm{y}\le1 }\norm{Ly}$. 
 Further notation is developed as necessary during the course of
 this paper. 

\section{Auxiliary results}
\label{nexp:f:nexp}

In this section, we collect various results that will be useful
in the sequel.

Suppose that $T:X\to X$. Then 
$T$ is \emph{nonexpansive} if 
\begin{equation}
(\forall x\in X)(\forall y\in X)\quad
\norm{Tx-Ty}\le \norm{x-y};
\end{equation}
$T$ is \emph{firmly nonexpansive} if
\begin{equation}
(\forall x\in X)(\forall y\in X)
\quad \normsq{Tx-Ty}+\normsq{(\Id-T)x-(\Id-T)y}
\le \normsq{x-y};
\end{equation}
$T$ is \emph{asymptotically regular} if 
\begin{equation}
(\forall x\in X)\quad T^n x -T^{n+1}x\to 0.
\end{equation}

\begin{fact}
\label{F:av:Asr}
Let $T:X\to X$. 
Then 
%hold:
\begin{equation}
\left.\begin{array}{c}
T~\text{firmly nonexpansive}\\
\fix T\neq \fady
\end{array}
\right\}
\quad 
\RA \quad T~\text{asymptotically regular}.
\end{equation}
\end{fact}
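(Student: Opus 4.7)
The plan is to exploit the defining inequality of firm nonexpansiveness with the second argument fixed at a fixed point of $T$, so that the term $\|(\Id-T)y\|^2$ vanishes and the inequality becomes a ``quasi-Fej\'er with a consecutive-difference term'' estimate.

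More precisely, I would fix any $x\in X$, pick some $p\in\fix T$ (which exists by hypothesis), and set $x_n := T^n x$. Substituting $y=p$ in the firm nonexpansiveness inequality gives, since $Tp=p$ and $(\Id-T)p = 0$,
\begin{equation*}
\normsq{x_{n+1}-p} + \normsq{x_{n+1}-x_n} \;\le\; \normsq{x_n - p}.
\end{equation*}
From this I would read off two things simultaneously: first, the sequence $(\normsq{x_n-p})_\nnn$ is nonincreasing and bounded below by $0$, hence convergent in $\RR$; second, rearranging yields
\begin{equation*}
\normsq{x_{n+1}-x_n} \;\le\; \normsq{x_n-p} - \normsq{x_{n+1}-p},
\end{equation*}
so $(\normsq{x_{n+1}-x_n})_\nnn$ is dominated by the successive differences of a convergent real sequence, which forces $\normsq{x_{n+1}-x_n} \to 0$. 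This is exactly the definition of asymptotic regularity.

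There is no real obstacle here. The only care needed is to ensure the firm nonexpansiveness inequality is applied to the pair $(x_n,p)$ (not $(x,x_n)$), so that the $(\Id-T)y$ term drops out cleanly; otherwise one would be stuck with an unwanted $\normsq{x-Tx}$ on the right-hand side. Note that the existence of at least one fixed point $p$ is essential: without it, the quasi-Fej\'er monotonicity mechanism has nothing to anchor to, and asymptotic regularity can indeed fail (e.g.\ a nontrivial translation is firmly nonexpansive but not asymptotically regular).
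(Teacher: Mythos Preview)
Your argument is correct: applying the firm nonexpansiveness inequality to the pair $(T^n x, p)$ with $p\in\fix T$ collapses the $(\Id-T)p$ term and yields the telescoping estimate $\normsq{x_{n+1}-x_n}\le\normsq{x_n-p}-\normsq{x_{n+1}-p}$, from which asymptotic regularity follows immediately.

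By contrast, the paper does not give a self-contained proof of this fact at all; it simply invokes \cite[Corollary~1.1]{Br-Reich77} or \cite[Corollary~5.16(ii)]{BC2011}. Your direct computation is precisely the standard argument underlying those references (specialized from averaged to firmly nonexpansive mappings), so in spirit the approaches coincide, but you have unpacked the black box. The gain is that your write-up is elementary and requires no external lookup; the cost is only a few lines. Your closing remark on the necessity of $\fix T\neq\fady$ via the translation example is also apt and not present in the paper.
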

\begin{proof}
See \cite[Corollary~1.1]{Br-Reich77} or \cite[Corollary~5.16(ii)]{BC2011}.
\end{proof}
\begin{fact}
\label{F:lin:ar}
Let $\linop\colon X\to X$ be linear and nonexpansive,
and let  $x\in X$. Then 
\begin{equation}
L^n x\to P_{\fix L} x \siff L^n x-L^{n+1} x\to 0.
\end{equation}
\end{fact}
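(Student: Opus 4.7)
The forward implication is immediate: since $L$ is a bounded linear operator (hence strongly continuous), if $L^n x \to p := P_{\fix L}x$, then $L^{n+1}x = L(L^n x) \to Lp = p$, so $L^n x - L^{n+1}x \to 0$.

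The reverse implication is the substantial one, and my plan is to combine an orthogonal decomposition with an approximation argument carried out on an invariant subspace. First I would record the decomposition $X = \fix L \oplus \overline{\ran(\Id - L)}$, which follows from the identity $\fix L = \fix L^{*}$. This identity itself comes from the equality case of Cauchy--Schwarz: $v = Lv$ forces $\|v\|^2 = \langle L^{*}v, v\rangle \le \|L^{*}v\|\,\|v\| \le \|v\|^2$, so $L^{*}v = v$ (and the converse is symmetric). Writing $x = p + q$ with $p = P_{\fix L} x \in \fix L$ and $q := x - p \in \overline{\ran(\Id - L)}$, the task reduces to showing $L^n q \to 0$ strongly.

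For the approximation I would introduce two closed linear subspaces: the $L$-invariant orbit $S := \overline{\operatorname{span}}\{L^k x : k \in \mathbb{N}\}$, and the ``good set'' $\mathcal{Z} := \{y \in X : L^n y - L^{n+1}y \to 0\}$. A routine triangle-inequality argument using nonexpansivity shows $\mathcal{Z}$ is closed, and it is manifestly linear and $L$-invariant (shifting $n \mapsto n+1$ preserves the limit). Since $x \in \mathcal{Z}$ by hypothesis and $S$ is the smallest closed linear $L$-invariant subspace containing $x$, we get $S \subseteq \mathcal{Z}$. The mean ergodic theorem for the linear nonexpansive $L$ gives $p = \lim_N N^{-1}\sum_{k=0}^{N-1} L^k x$, hence $p \in S$ and therefore also $q \in S$. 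Running the decomposition of the previous paragraph inside the Hilbert space $S$ (using that $L|_S$ is linear and nonexpansive) then delivers $q \in \overline{\ran(\Id - L|_S)}$, so one can write $q = \lim_k q_k$ with $q_k := (\Id - L) u_k$ and $u_k \in S$.

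To conclude, the crucial point is that $u_k \in S \subseteq \mathcal{Z}$, so the very definition of $\mathcal{Z}$ yields $L^n q_k = L^n u_k - L^{n+1}u_k \to 0$ as $n \to \infty$ for each fixed $k$. Nonexpansivity of $L^n$ then gives $\|L^n q\| \le \|L^n q_k\| + \|q - q_k\|$, whence $\limsup_n \|L^n q\| \le \|q - q_k\|$; letting $k \to \infty$ finishes the proof. I expect the main obstacle to be precisely this choice of pre-images $u_k$: a naive approximation $q = \lim (\Id - L)v_k$ with $v_k \in X$ would not give $L^n(\Id - L)v_k \to 0$, because an arbitrary $v_k$ need not inherit the asymptotic regularity assumption from $x$. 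The entire role of $S$ is to propagate this regularity from the single point $x$ to every element of the approximating sequence.
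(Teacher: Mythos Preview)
Your argument is correct. The paper does not supply its own proof of this statement: it is recorded as a \emph{Fact} and the proof consists solely of citations to Baillon, Baillon--Bruck--Reich, Bauschke--Deutsch--Hundal--Park, and Bauschke--Combettes. So there is no ``paper's proof'' to compare against line by line; what you have written is a complete, self-contained argument that those references would otherwise supply.

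A few remarks on the substance of your approach. The decomposition $X=\fix L\oplus\overline{\ran(\Id-L)}$ via $\fix L=\fix L^{*}$ is standard and correctly justified. The real content of your proof is the introduction of the two auxiliary subspaces: the closed orbit space $S=\overline{\operatorname{span}}\{L^k x:k\in\NN\}$ and the ``asymptotically regular'' set $\mathcal{Z}$. Your observation that $\mathcal{Z}$ is a closed linear $L$-invariant subspace, hence contains $S$, is exactly the mechanism needed to propagate the single-point hypothesis $L^n x - L^{n+1}x\to 0$ to the pre-images $u_k$ arising in the approximation $q\approx(\Id-L)u_k$. Without this device, as you correctly flag, choosing $u_k$ arbitrarily in $X$ would leave the argument stuck. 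The appeal to the mean ergodic theorem to place $p=P_{\fix L}x$ inside $S$ (and hence $q\in S$) is legitimate for linear contractions on Hilbert space, and the subsequent use of the decomposition \emph{within} $S$ to obtain $q\in\overline{\ran(\Id-L|_S)}$ is valid because $q\perp\fix L\supseteq\fix(L|_S)$.

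The classical proofs in the cited literature (notably Baillon's, which handles the broader class of odd nonexpansive maps) proceed somewhat differently, typically combining Fej\'er monotonicity with respect to $\fix L$, weak sequential compactness, and demiclosedness to first obtain weak convergence, and then upgrading to strong convergence via a separate norm argument. Your route is more algebraic and stays entirely within strong-topology reasoning, at the cost of invoking the mean ergodic theorem as a black box. Either way, the result follows.
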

\begin{proof}
See 
\cite[Proposition~4]{Baillon76},
 \cite[Theorem~1.1]{Ba-Br-Reich78}, 
 \cite[Theorem~2.2]{BDHP03}
or \cite[Proposition~5.27]{BC2011}.
(We mention in passing that in \cite[Proposition~4]{Baillon76} 
the author proved the result for general 
odd nonexpansive mappings in Hilbert spaces
and in
 \cite[Theorem~1.1]{Ba-Br-Reich78},
the authors generalize the result to Banach spaces.)
\end{proof}

 \begin{defn}
 Let $Y$ be a real Banach space,
let $(y_n)_\nnn$ be a sequence in $Y$ and let $y_\infty\in Y$.
Then $(y_n)_\nnn$ 
 \emph{converges} to $y_\infty$, denoted $y_n\to y_\infty$,
 if $\norm{y_n-y_\infty}\to 0$. $(y_n)_\nnn$;
 \emph{converges} $\mu$\emph{-linearly} to $y_\infty$
 if $\mu\in \left[0,1\right[$ and there exists
 $M\ge 0$ such that\footnote{
By \cite[Remark~3.7]{BLPW2013}, 
this is equivalent to $(\exists M>0)(\exists N\in \NN)(\forall
n\geq N)$
$ \norm{y_n-y_\infty}\le M\mu^n$.}
 \begin{equation}
 \label{eq:mu:lin}
 (\forall \nnn)\quad \norm{y_n- y_\infty}\le M\mu^n.
 \end{equation}
$(y_n)_\nnn$ 
 \emph{converges linearly} to $y_\infty$ if
  there exists $\mu\in \left[0,1\right[$
  and $M\ge 0$ such that \cref{eq:mu:lin} holds.
 \end{defn}
 
 \begin{example}[\bf{convergence vs. pointwise 
 convergence of bounded linear operators}]
 \label{ex:conv:pw}
Let $Y$ be a real Banach space,
let $(L_n)_\nnn$ be a sequence in $ \mathcal{B}(Y)$,
 and let ${L_{\infty}}\in \mathcal{B}(Y)$.
 Then one says:
 \begin{enumerate}
 \item  
  \label{ex:conv:conv}
 $(L_n)_\nnn$ \emph{converges
 or converges uniformly}
 to 
 ${L_{\infty}}$ in $\mathcal{B}(Y)$ if
  $L_n\to {L_{\infty}}$ $(\text{in}~ \mathcal{B}(Y))$.
 \item 
   \label{ex:pw:pw}
 $(L_n)_\nnn$ \emph{converges pointwise} 
 to  ${L_{\infty}}$
  if $(\forall y\in Y)$
   $L_ny\to{L_{\infty}} y$
$(\text{in}~ Y)$.
 \end{enumerate}
 \end{example}
 \begin{remark}
 It is easy to see that the convergence of a sequence
 of bounded linear operators implies pointwise convergence; 
however, the converse is not true 
(see, e.g., \cite[Example~4.9-2]{Krey89}). 
 \end{remark}
  
 \begin{lem}
 \label{lem:rate:UBP}
 Let $Y$ be a real Banach space,
let
 $(L_n)_\nnn$ be a sequence in $ \mathcal{B}(Y)$,
let ${L_{\infty}}\in \mathcal{B}(Y)$,
and let $\mu\in \left]0, 1\right[$. Then
\begin{equation}
(\forall y\in Y)~
\linop_n y\to {L_{\infty}} y ~~\mu\text{-linearly} ~~(\text{in}~Y)
\siff \linop_n \to{L_{\infty}}  ~~
\mu\text{-linearly}~~(\text{in}~\mathcal{B}(Y)).
\end{equation}
\end{lem}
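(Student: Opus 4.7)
The plan is to prove the $(\Leftarrow)$ direction by a direct estimate and the $(\Rightarrow)$ direction via the Banach--Steinhaus theorem applied to suitably rescaled operators.

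For the easy direction $(\Leftarrow)$, assume that $\linop_n\to{L_{\infty}}$ $\mu$-linearly in $\mathcal{B}(Y)$, so there exists $M\ge 0$ with $\norm{\linop_n-{L_{\infty}}}\le M\mu^n$ for every $n\in\NN$. Then for every $y\in Y$ and every $n\in\NN$,
\begin{equation*}
\norm{\linop_n y-{L_{\infty}} y} \le \norm{\linop_n-{L_{\infty}}}\,\norm{y} \le (M\norm{y})\mu^n,
\end{equation*}
which delivers $\mu$-linear convergence of $(\linop_n y)_\nnn$ with constant $M\norm{y}$.

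For the harder direction $(\Rightarrow)$, assume that $\linop_n y\to{L_{\infty}} y$ $\mu$-linearly for every $y\in Y$. The subtlety is that the definition provides, for each $y$, a constant $M_y\ge 0$ with $\norm{\linop_n y-{L_{\infty}} y}\le M_y\mu^n$, but a priori these constants need not be bounded in $y$. My plan is to absorb the exponential decay into the operators themselves: set
\begin{equation*}
(\forall \nnn)\quad T_n \coloneqq \mu^{-n}(\linop_n-{L_{\infty}})\in\mathcal{B}(Y).
\end{equation*}
Then for each fixed $y\in Y$, $\norm{T_n y}=\mu^{-n}\norm{\linop_n y-{L_{\infty}} y}\le M_y$ for all $\nnn$, so the family $(T_n)_\nnn$ is \emph{pointwise bounded} on $Y$.

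Since $Y$ is a Banach space, the Banach--Steinhaus (uniform boundedness) theorem applies to $(T_n)_\nnn\subseteq \mathcal{B}(Y)$ and yields $M\coloneqq \sup_{\nnn}\norm{T_n}<+\infty$. Unwinding the definition of $T_n$, this gives
\begin{equation*}
(\forall\nnn)\quad \norm{\linop_n-{L_{\infty}}}=\mu^n\norm{T_n}\le M\mu^n,
\end{equation*}
which is exactly $\mu$-linear convergence of $(\linop_n)_\nnn$ to ${L_{\infty}}$ in $\mathcal{B}(Y)$. The only real obstacle is recognizing that the pointwise $\mu$-linear rates can be uniformized via Banach--Steinhaus after rescaling by $\mu^{-n}$; completeness of $Y$ is essential here, consistent with the hypothesis that $Y$ is a Banach space.
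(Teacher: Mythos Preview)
Your proof is correct and follows essentially the same approach as the paper's. Both argue the $(\Leftarrow)$ direction by the direct estimate $\norm{L_n y - L_\infty y}\le \norm{L_n - L_\infty}\norm{y}$, and the $(\Rightarrow)$ direction by applying the Uniform Boundedness Principle (Banach--Steinhaus) to the rescaled family $\mu^{-n}(L_n - L_\infty)$ to upgrade the pointwise bounds $M_y$ to a uniform bound $M$.
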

\begin{proof}
Let $ y\in Y$. 
``$\RA$":
Because $\linop_n y\to {L_{\infty}} y ~~\mu\text{-linearly} $,
there exists $ M_y\ge 0$ 
such that $(\forall\nnn)$ 
$\norm{(\linop_n -{L_{\infty}})y}\le \mu^nM_y $;
equivalently,
\begin{equation}
\Norm{\bk{\frac{\linop_n -{L_{\infty}}}{\mu^n}}y}
=\frac{\norm{(\linop_n -{L_{\infty}})y}}{\mu^n}\le M_y. 
\end{equation}
It follows from the Uniform Boundedness Principle
(see, e.g., \cite[4.7-3]{Krey89}) 
applied to the sequence 
$((\linop_n -{L_{\infty}})/\mu^n)_\nnn$
that
$(\exists M\ge 0)(\forall n\in \NN)$ 
$\|(\linop_n -{L_{\infty}})/\mu^n\|\le M$;
equivalently,
$\norm{\linop_n -{L_{\infty}}}\le M\mu^n$, as required.
``$\LA$": 
Since $\linop_n\to L_\infty$ $\mu$-linearly, we have
$(\exi M\geq 0)(\forall\nnn)$ $\|\linop_n-L_\infty\| \leq M
\mu^n$. 
Therefore, $(\forall\nnn)$ 
$\norm{L_n y-{L_{\infty}}y}\le\norm{L_n -{L_{\infty}}}\norm{y}
\le M\norm{y} \mu^n$.
\end{proof}

\begin{lem}
\label{lem:fd:iff:c}
Suppose that $X$ is finite-dimensional, 
let $(\linop_n)_\nnn$ 
be a sequence of linear 
nonexpansive operators on $X$
and let $L_{\infty}:X\to X$.
Then the following are equivalent:
\begin{enumerate}
\item
\label{lem:fd:iff:c:i}
$(\forall x\in X)~L_n x\to L_{\infty}x$.
\item
\label{lem:fd:iff:c:ii}
$L_n \to L_{\infty}$ pointwise $~(\text{in}~X)$, and 
$L_{\infty}$ is linear and nonexpansive.
\item
\label{lem:fd:iff:c:iii}
$L_n\to{L_{\infty}}~(\text{in}~\mathcal{B}(X))$.
\end{enumerate}
\end{lem}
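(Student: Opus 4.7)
The plan is to prove the chain of implications (iii)$\Rightarrow$(i)$\Rightarrow$(ii)$\Rightarrow$(iii), exploiting finite-dimensionality only in the last step.

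The implication (iii)$\Rightarrow$(i) is immediate from the definition of the operator norm, since
$\norm{L_nx-L_\infty x}\le\norm{L_n-L_\infty}\norm{x}\to 0$
for every $x\in X$.

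For (i)$\Rightarrow$(ii), observe first that \ref{lem:fd:iff:c:i} is exactly the statement that $L_n\to L_\infty$ pointwise, so only the linearity and nonexpansiveness of $L_\infty$ need to be verified. For linearity, pass to the limit in the identities $L_n(\alpha x+\beta y)=\alpha L_nx+\beta L_ny$, which are valid for each $n\in\NN$; the continuity of addition and scalar multiplication yields $L_\infty(\alpha x+\beta y)=\alpha L_\infty x+\beta L_\infty y$. For nonexpansiveness, pass to the limit in $\norm{L_nx-L_ny}\le\norm{x-y}$ using the continuity of the norm.

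The only step that genuinely uses the assumption $\dim X<\infty$ is (ii)$\Rightarrow$(iii), and this is the main point to address. The plan is to fix an orthonormal basis $e_1,\dots,e_d$ of $X$ (where $d=\dim X$) and to estimate, for $x=\sum_{i=1}^d \scal{x}{e_i}e_i\in X$ with $\norm{x}\le 1$,
\begin{equation*}
\norm{(L_n-L_\infty)x}\le\sum_{i=1}^d|\scal{x}{e_i}|\,\norm{(L_n-L_\infty)e_i}\le\sum_{i=1}^d\norm{(L_n-L_\infty)e_i}.
\end{equation*}
Taking the supremum over such $x$ shows $\norm{L_n-L_\infty}\le\sum_{i=1}^d\norm{(L_n-L_\infty)e_i}$, and each term on the right tends to $0$ by pointwise convergence. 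Hence $L_n\to L_\infty$ in $\mathcal B(X)$, which completes the cycle. The only ``obstacle'' is remembering that finite-dimensionality is essential here (pointwise convergence of bounded linear operators is in general strictly weaker than uniform convergence, as noted in \cref{ex:conv:pw}); no further subtlety arises, and the nonexpansiveness of the $L_n$ is not needed for (ii)$\Rightarrow$(iii), although it ensures that the limit is automatically bounded in step (i)$\Rightarrow$(ii).
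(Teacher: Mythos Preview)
Your argument is correct. The cycle (iii)$\Rightarrow$(i)$\Rightarrow$(ii)$\Rightarrow$(iii) is clean, and your orthonormal-basis estimate for the key step (ii)$\Rightarrow$(iii) is valid: expanding $x=\sum_{i=1}^d\scal{x}{e_i}e_i$ and using the linearity of $L_n-L_\infty$ gives the uniform bound $\norm{L_n-L_\infty}\le\sum_{i=1}^d\norm{(L_n-L_\infty)e_i}\to 0$.

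The paper proves (ii)$\Rightarrow$(iii) differently, via a compactness argument rather than a basis expansion. It chooses unit vectors $x_n$ that nearly realize $\norm{L_n-L_\infty}$, extracts (by compactness of the unit sphere in finite dimensions) a convergent subsequence $x_n\to x_\infty$, and then estimates
\[
\norm{(L_n-L_\infty)x_n}\le\norm{L_n-L_\infty}\,\norm{x_n-x_\infty}+\norm{(L_n-L_\infty)x_\infty}\le 2\norm{x_n-x_\infty}+\norm{(L_n-L_\infty)x_\infty}\to 0.
\]
This route genuinely uses the nonexpansiveness of the $L_n$ and of $L_\infty$ (to get the bound $\norm{L_n-L_\infty}\le 2$), whereas your basis argument does not: for you, linearity of the limit suffices. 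In that sense your version is slightly more general and more elementary; the paper's version, on the other hand, is basis-free and would adapt to settings where the unit ball is compact but no convenient coordinate system is at hand.
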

\begin{proof}
The implications 
``\ref{lem:fd:iff:c:i}$\RA$\ref{lem:fd:iff:c:ii}''
and 
``\ref{lem:fd:iff:c:iii}$\RA$\ref{lem:fd:iff:c:i}'' 
are easy to verify. 
``\ref{lem:fd:iff:c:ii}$\RA$\ref{lem:fd:iff:c:iii}":
Suppose that $(x_n)_\nnn$ is a sequence in $X$
such that $(\forall \nnn)~\norm{x_n}=1$ and 
\begin{equation}
\label{eq:approx:limTn}
\norm{L_n -L_\infty}
-\norm{L_n x_n-L_\infty x_n}\to 0.
\end{equation} 
We can and do assume that
$x_n\to x_\infty$. 
Since ${L_{\infty}}$
and  $(L_n)_\nnn$ are linear and nonexpansive,
we have
$\norm{{L_{\infty}}}\le 1$ and 
$(\forall n\in \NN)$ $\norm{L_n}\le 1$.
Using the triangle inequality, we have 
$ 
\norm{L_n x_n-{L_{\infty}} x_n}
=\norm{(L_n-{L_{\infty}})(x_n-x_\infty)+(L_n-{L_{\infty}})x_\infty}
\le \norm{L_n-{L_{\infty}}}\norm{x_n-x_\infty}
+\norm{(L_n-{L_{\infty}})x_\infty}
\le 2\norm{x_n-x_\infty}+\norm{(L_n-{L_{\infty}})x_\infty}
\to 0+0=0$. Now combine with \cref{eq:approx:limTn}.
\end{proof}

 \begin{cor}
 \label{F:lin:rate:Rn}

Suppose that $X$ is finite-dimensional, let 
$L:X\to X$ be linear, and let ${L_{\infty}}:X\to X$ be such that 
$L^n\to {L_{\infty}}$ pointwise.
Then $L^n\to {L_{\infty}}$ linearly.
\end{cor}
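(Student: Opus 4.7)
The plan is to reduce the statement to a spectral-radius estimate on the operator $R := L - L_\infty$, since linear convergence of $L^n$ to $L_\infty$ is the same as $\|R^n\|=\|L^n-L_\infty\| \leq M\mu^n$ for some $\mu\in \left[0,1\right[$.

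First I would extract the algebraic structure of $L_\infty$. Since $X$ is finite-dimensional, pointwise convergence of $L^n$ forces $(\|L^n\|)_\nnn$ to be bounded (by the Uniform Boundedness Principle, as used in the proof of \cref{lem:rate:UBP}); in particular the pointwise limit $L_\infty$ is linear and bounded. Passing to the limit in $L\cdot L^n = L^{n+1}= L^n \cdot L$ yields $L\, L_\infty = L_\infty = L_\infty\, L$, and passing to the limit in $L^n\cdot L^n = L^{2n}$ gives $L_\infty^2 = L_\infty$. These identities imply, by an easy induction on $n\ge 1$, that
\begin{equation}
\label{eq:Rn:plan}
R^n \;=\; (L-L_\infty)^n \;=\; L^n - L_\infty.
\end{equation}

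Next I would analyze $R$ via its Jordan decomposition in $\mathcal B(X)$. From $R^n x \to 0$ for every $x\in X$, every eigenvalue $\mu$ of $R$ must satisfy $|\mu|<1$: if $v$ is a generalized eigenvector for an eigenvalue with $|\mu|\ge 1$, then on that generalized eigenspace $R=\mu\Id+N$ with $N$ nilpotent, and $(\mu\Id+N)^n v$ cannot tend to $0$ (the $\mu^n$ terms either have modulus $\ge 1$ or combine with polynomial factors coming from $N$ in a way that prevents convergence to $0$). Hence the spectral radius satisfies $\rho(R)<1$. Pick any $\mu\in \bigl]\rho(R),1\bigr[$. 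From the Jordan form $R = SJS^{-1}$ we obtain a polynomial-times-geometric bound $\|R^n\|\le C(1+n^k)\rho(R)^n$, where $k$ is one less than the largest Jordan block size, and hence $\|R^n\|\le \widetilde M \mu^n$ for some $\widetilde M \ge 0$ and all $\nnn$. Combining with \cref{eq:Rn:plan} yields
\begin{equation*}
\|L^n x - L_\infty x\| \le \|R^n\|\,\|x\| \le \widetilde M \|x\|\,\mu^n,
\end{equation*}
which is the desired linear rate.

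The main obstacle is the eigenvalue/Jordan-block analysis: one must rule out unit-modulus eigenvalues of $R$ and control Jordan blocks so that polynomial growth is dominated by geometric decay. This is standard in finite dimensions but has to be spelled out, and one could alternatively phrase the argument purely via Gelfand's formula $\rho(R)=\lim_n\|R^n\|^{1/n}$, using that pointwise convergence $R^n\to 0$ together with finite-dimensionality implies $\|R^n\|\to 0$ (via \cref{lem:fd:iff:c} applied to $R^n/\|R^n\|$ after a preliminary rescaling), which forces $\rho(R)<1$ and then linear convergence on $\mathcal B(X)$ by \cref{lem:rate:UBP}.
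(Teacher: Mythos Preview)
Your argument is correct. The identity $R^n = L^n - L_\infty$ for $n\ge 1$, derived from $L L_\infty = L_\infty L = L_\infty = L_\infty^2$, is the right reduction, and the Jordan-form bound $\|R^n\|\le C(1+n^k)\rho(R)^n$ with $\rho(R)<1$ finishes the job cleanly.

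The paper proceeds differently: it simply combines \cref{lem:fd:iff:c} (to pass from pointwise to norm convergence in finite dimensions) with \cite[Theorem~2.12(i)]{BBNFW15}, an external result stating that convergent matrix powers converge linearly. So the paper outsources the spectral analysis entirely, whereas you carry it out by hand. Your route is more self-contained and makes the mechanism explicit---in particular it exhibits the rate as any $\mu\in\bigl]\rho(L-L_\infty),1\bigr[$---at the price of having to discuss Jordan blocks and rule out unit-modulus eigenvalues. The paper's route is shorter but opaque without access to the cited theorem.

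One small wrinkle in your closing paragraph: invoking \cref{lem:fd:iff:c} for the alternative Gelfand-formula argument is awkward, since that lemma is stated for \emph{nonexpansive} sequences and $R$ need not be nonexpansive. You do not need it: in finite dimensions, pointwise convergence $R^n x\to 0$ for all $x$ already gives $\|R^n\|\to 0$ by evaluating on a basis, and then $\rho(R)<1$ follows either from Gelfand's formula or directly from the eigenvector argument you already gave. This does not affect your main line of proof.
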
 
\begin{proof}
Combine \cref{lem:fd:iff:c}
 and \cite[Theorem~2.12(i)]{BBNFW15}.
\end{proof}

\section{Iterating an affine nonexpansive operator}

\label{itaff}

We begin with a simple yet useful result.

\begin{thm}\label{P:aff:v0}
Let $\linop\colon X\to X$ be linear, 
let $b\in X$, set
$T\colon X\to X\colon x\mapsto \linop x + b$,
and suppose that $\fix T\neq\fady$. 
Let 
$x\in X$.
Then the following hold:
\begin{enumerate}
\item\label{P:aff:i:v0}
$ b\in \ran(\Id-\linop)$.

\item\label{P:aff:ii:v0}
$(\forall n\in \NN)$ 
$T^nx=\linop^nx+\sum_{k=0}^{n-1}\linop^kb$.

\end{enumerate}
\end{thm}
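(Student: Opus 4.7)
The plan is to dispatch both parts with elementary arguments: \ref{P:aff:i:v0} by unpacking the fixed point hypothesis, and \ref{P:aff:ii:v0} by a straightforward induction on $n$.

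For \ref{P:aff:i:v0}, I would start by picking any $y\in\fix T$, which exists by assumption. Writing out $Ty=y$ using the affine formula $Ty=Ly+b$ gives $Ly+b=y$, i.e., $b=y-Ly=(\Id-L)y$. This immediately places $b$ in $\ran(\Id-L)$.

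For \ref{P:aff:ii:v0}, I would proceed by induction on $n$. The base case $n=0$ reduces to $T^0x=x=L^0x+\sum_{k=0}^{-1}L^kb$, where the empty sum is $0$; this is trivial. For the inductive step, assuming $T^nx=L^nx+\sum_{k=0}^{n-1}L^kb$, I would apply $T$ to both sides and use linearity of $L$:
\begin{equation*}
T^{n+1}x=T(T^nx)=L\Big(L^nx+\sum_{k=0}^{n-1}L^kb\Big)+b
=L^{n+1}x+\sum_{k=0}^{n-1}L^{k+1}b+b.
\end{equation*}
After re-indexing the sum and absorbing the trailing $b=L^0b$, this becomes $L^{n+1}x+\sum_{k=0}^{n}L^kb$, closing the induction.

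There is no real obstacle here; the statement is essentially the affine analogue of the geometric series identity, and the fixed-point hypothesis is used only in \ref{P:aff:i:v0} (and is not needed for \ref{P:aff:ii:v0}, which holds for every affine $T$).
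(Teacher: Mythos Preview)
Your proof is correct and follows essentially the same approach as the paper: part \ref{P:aff:i:v0} unpacks the fixed-point condition to get $b=(\Id-L)y$, and part \ref{P:aff:ii:v0} is the same induction with the same computation. Your remark that \ref{P:aff:ii:v0} does not require $\fix T\neq\fady$ is a valid side observation not made explicitly in the paper.
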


\begin{proof}
\ref{P:aff:i:v0}:
$\fix T\neq \fady\siff$
$(\exists y\in X)$ $y=Ly+b$
$\siff b\in \ran (\Id-L)$.
\ref{P:aff:ii:v0}:
We prove this by induction (see also 
\cite[Theorem~3.2(ii)]{Siopt2016}). 
When $n=0$ or $n=1$ the conclusion is obviously true.
Now suppose that, for some $n\in \NN$, 
\begin{equation}
T^nx=\linop^n x+\sum_{k=0}^{n-1}\linop^kb.
\end{equation}  
Then 
$T^{n+1}x=T(T^nx)
=T(\linop^n x+\sum_{k=0}^{n-1}\linop^kb)= 
\linop(\linop^n x+\sum_{k=0}^{n-1}\linop^kb)+b
=\linop^{n+1} x+\sum_{k=0}^{n}\linop^kb$.
\end{proof}

Let $S$ be a nonempty closed convex subset of
$X$ and let $w\in X$. We recall the following useful translation formula
(see, e.g., \cite[Proposition~3.17]{BC2011}):
\begin{equation}
\label{eq:proj:trans}
(\forall x\in X)\quad P_{w+S}x=w+P_S(x-w).
\end{equation}
\begin{lem}
\label{lem:shortcut}
Let $\linop\colon X\to X$ be linear and nonexpansive, 
let $b\in X$, set
$T\colon X\to X\colon x\mapsto \linop x + b$,
and suppose that $\fix T\neq\fady$. 
Then there exists a point $a\in X$
such that $b=a-La$  and
\begin{equation}
\label{e:0329a}
(\forall x\in X)\quad Tx=L(x-a)+a.
\end{equation} 
Moreover, the following hold:
\begin{enumerate}
\item
\label{lem:shortcut:i}
$\fix T=a+\fix L$.
\item
\label{lem:shortcut:ii}
$(\forall x\in X)\quad
P_{\fix T}x=a+P_{\fix L}(x-a)
=P_{(\fix L)^\perp}a+P_{\fix L}x$.
\item
\label{lem:shortcut:iii}
$(\forall \nnn)(\forall x\in X) \quad T^n x=a+L^n(x-a).$
\end{enumerate}
\end{lem}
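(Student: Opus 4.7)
The existence of $a$ is immediate from \cref{P:aff:v0}\ref{P:aff:i:v0}: since $\fix T \neq \emp$ we have $b \in \ran(\Id - L)$, so pick any $a \in X$ with $b = a - La = (\Id-L)a$. Substituting gives $Tx = Lx + b = Lx + a - La = L(x-a) + a$, which is \cref{e:0329a}. For \ref{lem:shortcut:i}, note that $x \in \fix T \Leftrightarrow L(x-a) + a = x \Leftrightarrow L(x-a) = x - a \Leftrightarrow x - a \in \fix L$, which is exactly $\fix T = a + \fix L$.

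For \ref{lem:shortcut:iii}, I would proceed by induction. The case $n=0$ is trivial, and assuming $T^n x = a + L^n(x-a)$, applying \cref{e:0329a} yields $T^{n+1}x = L(T^n x - a) + a = L(L^n(x-a)) + a = L^{n+1}(x-a) + a$. Alternatively, one can use \cref{P:aff:v0}\ref{P:aff:ii:v0} with $b = a - La$ and telescope $\sum_{k=0}^{n-1} L^k(a - La) = a - L^n a$ to obtain the same identity.

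For \ref{lem:shortcut:ii}, the first equality follows by applying the translation formula \cref{eq:proj:trans} with $w = a$ and $S = \fix L$ to the set $\fix T = a + \fix L$ from \ref{lem:shortcut:i}. For the second equality, the key observation is that $\fix L = \ker(\Id - L)$ is a closed linear subspace of $X$ because $L$ is linear and continuous; consequently $P_{\fix L}$ is a linear operator, and the orthogonal decomposition $a = P_{\fix L}a + P_{(\fix L)^\perp} a$ holds. Using linearity,
\begin{equation*}
a + P_{\fix L}(x-a) = a + P_{\fix L}x - P_{\fix L}a = (a - P_{\fix L}a) + P_{\fix L}x = P_{(\fix L)^\perp}a + P_{\fix L}x,
\end{equation*}
which completes the identity.

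No step presents a real obstacle; the only subtle point is remembering that $\fix L$ is a closed subspace (as the kernel of a bounded linear operator), which makes $P_{\fix L}$ linear and enables the algebraic manipulation in \ref{lem:shortcut:ii}.
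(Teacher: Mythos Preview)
Your proposal is correct and follows essentially the same approach as the paper: the existence of $a$ via \cref{P:aff:v0}\ref{P:aff:i:v0}, the direct verification of \ref{lem:shortcut:i}, the translation formula \cref{eq:proj:trans} plus linearity of $P_{\fix L}$ for \ref{lem:shortcut:ii}, and (your second alternative) the telescoping argument via \cref{P:aff:v0}\ref{P:aff:ii:v0} for \ref{lem:shortcut:iii}. Your explicit remark that $\fix L=\ker(\Id-L)$ is closed (justifying linearity of $P_{\fix L}$) is a helpful clarification that the paper leaves to a citation.
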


\begin{proof}
The existence of $a$ and \eqref{e:0329a}
follows from \cref{P:aff:v0} and the linearity of $L$. 
\ref{lem:shortcut:i}:
Let $y\in X$. Then 
$y\in \fix T$ 
$\siff y-a \in \fix L\siff y\in a+\fix L$.
\ref{lem:shortcut:ii}:
The first identity follows from combining 
\ref{lem:shortcut:i} and \cref{eq:proj:trans}.
It follows from, e.g.,  \cite[Corollary~3.22(ii)]{BC2011}
that $a+P_{\fix L}(x-a)
=a+P_{\fix L}x-P_{\fix L}a=P_{(\fix L)^\perp}a+P_{\fix L}x$.
\ref{lem:shortcut:iii}:
By telescoping, we have 
\begin{equation}
\label{eq:telescop}
\sum_{k=0}^{n-1}\linop^kb=\sum_{k=0}^{n-1}\linop^k(a-La)=a-L^n a.
\end{equation}
Consequently, \cref{P:aff:v0}\ref{P:aff:ii:v0}
 and \cref{eq:telescop} yield 
$
 T^n x=L^n x+a-L^n a=a+L^n (x-a).
$ 
\end{proof}

The following result extends \cref{F:lin:ar}
from the linear to the affine case.

\begin{thm}
\label{Thm:asym:Lin:Af}
Let $\linop\colon X\to X$ be linear and nonexpansive, 
let $b\in X$, set
$T\colon X\to X\colon x\mapsto \linop x + b$, 
and suppose that $\fix T\neq\fady$. 
Then the following are equivalent:
\begin{enumerate}
\item
\label{Thm:asym:Lin:Af:i}
$L$ is asymptotically regular.
\item
\label{Thm:asym:Lin:Af:iii}
$L^n \to P_{\fix L} $ pointwise.
\item
\label{Thm:asym:Lin:Af:iv}
$T^n \to P_{\fix T} $ pointwise.
\item
\label{Thm:asym:Lin:Af:ii}
$T$ is asymptotically regular.
\end{enumerate}
\end{thm}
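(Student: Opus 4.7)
The plan is to reduce everything to statements about $L$ alone using \cref{lem:shortcut}, which gives the affine conjugation identity $T^n x = a + L^n(x-a)$ and the companion formula $P_{\fix T} x = a + P_{\fix L}(x-a)$, where $a \in X$ satisfies $b = a - La$. Since the map $x \mapsto x-a$ is a bijection of $X$, any ``for all $x \in X$'' statement about $T^n$ translates to a ``for all $y \in X$'' statement about $L^n$ by setting $y = x-a$, and conversely. This is the engine that powers all four equivalences.

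I would prove the implications in the cycle \ref{Thm:asym:Lin:Af:i}$\Leftrightarrow$\ref{Thm:asym:Lin:Af:iii}, \ref{Thm:asym:Lin:Af:iii}$\Leftrightarrow$\ref{Thm:asym:Lin:Af:iv}, \ref{Thm:asym:Lin:Af:iv}$\Leftrightarrow$\ref{Thm:asym:Lin:Af:i}, although in practice a shorter circular route suffices. First, \ref{Thm:asym:Lin:Af:i}$\Leftrightarrow$\ref{Thm:asym:Lin:Af:iii} is exactly \cref{F:lin:ar} applied to $L$ at every point $x \in X$: asymptotic regularity is by definition $L^n x - L^{n+1} x \to 0$ for every $x$, which is equivalent to $L^n x \to P_{\fix L} x$ for every $x$. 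Second, for \ref{Thm:asym:Lin:Af:iii}$\Leftrightarrow$\ref{Thm:asym:Lin:Af:iv}, fix $x \in X$ and set $y = x-a$; combining \cref{lem:shortcut}\ref{lem:shortcut:ii} and \ref{lem:shortcut:iii} yields
\begin{equation}
T^n x - P_{\fix T} x = \bigl(a + L^n(x-a)\bigr) - \bigl(a + P_{\fix L}(x-a)\bigr) = L^n y - P_{\fix L} y,
\end{equation}
so pointwise convergence of $T^n$ to $P_{\fix T}$ is equivalent to pointwise convergence of $L^n$ to $P_{\fix L}$.

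Finally, for \ref{Thm:asym:Lin:Af:i}$\Leftrightarrow$\ref{Thm:asym:Lin:Af:iv}, use \cref{lem:shortcut}\ref{lem:shortcut:iii} again: for every $x \in X$, with $y = x-a$,
\begin{equation}
T^{n+1} x - T^n x = L^{n+1} y - L^n y,
\end{equation}
and the surjectivity of $x \mapsto x-a$ shows that ``$T^{n+1} x - T^n x \to 0$ for every $x$'' is the same as ``$L^{n+1} y - L^n y \to 0$ for every $y$.''

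There is no serious obstacle: the main content is already packaged in \cref{lem:shortcut}, and the rest is a change of variables plus an application of \cref{F:lin:ar}. The only point requiring a little care is to note explicitly that $x \mapsto x-a$ is a bijection so that the quantifier ``for all $x$'' on the $T$-side and ``for all $y$'' on the $L$-side carry the same information; without this observation one would only get one direction of each equivalence.
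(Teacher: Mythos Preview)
Your proof is correct and uses essentially the same approach as the paper: both reduce to $L$ via the conjugation identities of \cref{lem:shortcut} and then invoke \cref{F:lin:ar}, differing only in how the implications are arranged (the paper runs the cycle \ref{Thm:asym:Lin:Af:iii}$\Rightarrow$\ref{Thm:asym:Lin:Af:iv}$\Rightarrow$\ref{Thm:asym:Lin:Af:ii}$\Rightarrow$\ref{Thm:asym:Lin:Af:i}, using the trivial step $T^n x - T^{n+1}x \to P_{\fix T}x - P_{\fix T}x = 0$ for \ref{Thm:asym:Lin:Af:iv}$\Rightarrow$\ref{Thm:asym:Lin:Af:ii}). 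One labeling slip to fix: your final paragraph actually proves \ref{Thm:asym:Lin:Af:i}$\Leftrightarrow$\ref{Thm:asym:Lin:Af:ii} (asymptotic regularity of $L$ versus $T$), not \ref{Thm:asym:Lin:Af:i}$\Leftrightarrow$\ref{Thm:asym:Lin:Af:iv} as written, so as stated your plan never references item~\ref{Thm:asym:Lin:Af:ii} --- the mathematics is right, but the cross-reference needs correcting.
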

\begin{proof}
Let $x\in X$. 
``\ref{Thm:asym:Lin:Af:i}$\siff$\ref{Thm:asym:Lin:Af:iii}":
This is \cref{F:lin:ar}.
``\ref{Thm:asym:Lin:Af:iii}$\RA$\ref{Thm:asym:Lin:Af:iv}":
In view of \cref{lem:shortcut}\ref{lem:shortcut:iii}\&\ref{lem:shortcut:ii}
we have $T^n x=L^n (x-a)+a\to P_{\fix L}(x-a) +a=P_{\fix T}x$.
``\ref{Thm:asym:Lin:Af:iv}$\RA$\ref{Thm:asym:Lin:Af:ii}":
$T^n x-T^{n+1}x\to P_{\fix T} x-P_{\fix T} x=0$.
``\ref{Thm:asym:Lin:Af:ii}$\RA$\ref{Thm:asym:Lin:Af:i}":
Using \cref{lem:shortcut}\ref{lem:shortcut:iii}
 we have 
$ L^n x-L^{n+1} x=T^n (x+a)-T^{n+1} (x+a)\to 0$.
 \end{proof}

We now turn to linear convergence.

\begin{lem}
\label{lem:fd:iff}
Suppose that $X$ is finite-dimensional, 
and let $L:X\to X$ be linear and
nonexpansive. 
Then the following are equivalent:
\begin{enumerate}
\item
\label{lem:fd:iff:0}
$L$ is asymptotically regular.
\item
\label{lem:fd:iff:i}
$L^n \to P_{\fix L} $ pointwise $(\text{in}~ X)$.
\item
\label{lem:fd:iff:ii}
$L^n\to P_{\fix L}$ $(\text{in} ~\mathcal{B}(X))$.
\item
\label{lem:fd:iff:iii}
$L^n \to P_{\fix L} $ linearly pointwise $(\text{in}~ X)$.
\item
\label{lem:fd:iff:iv}
$L^n \to P_{\fix L} $ linearly $(\text{in}~ \mathcal{B}(X))$.
\end{enumerate}

\end{lem}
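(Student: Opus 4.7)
The proof will close a loop of implications, leaning on the three preparatory results already established: Fact~\ref{F:lin:ar}, Lemma~\ref{lem:fd:iff:c}, and Corollary~\ref{F:lin:rate:Rn}. Specifically, the plan is to verify
\[
\text{\ref{lem:fd:iff:0}}\iff\text{\ref{lem:fd:iff:i}}\iff\text{\ref{lem:fd:iff:ii}}\implies\text{\ref{lem:fd:iff:iv}}\implies\text{\ref{lem:fd:iff:iii}}\implies\text{\ref{lem:fd:iff:i}},
\]
which yields the full equivalence.

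For the asymptotic regularity block, \ref{lem:fd:iff:0}$\Leftrightarrow$\ref{lem:fd:iff:i} is a direct application of Fact~\ref{F:lin:ar}. For \ref{lem:fd:iff:i}$\Leftrightarrow$\ref{lem:fd:iff:ii}, I would apply Lemma~\ref{lem:fd:iff:c} with $L_n:=L^n$ and $L_\infty:=P_{\fix L}$; one direction is immediate, and for the other it suffices to observe that $P_{\fix L}$ is linear and nonexpansive, so the hypothesis of Lemma~\ref{lem:fd:iff:c}\ref{lem:fd:iff:c:ii} is met and operator-norm convergence follows.

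For the linear-rate statements, the key step is \ref{lem:fd:iff:ii}$\Rightarrow$\ref{lem:fd:iff:iv}, which I would obtain simply by invoking Corollary~\ref{F:lin:rate:Rn} with $L_\infty:=P_{\fix L}$: in finite dimensions, pointwise (hence norm) convergence of the powers $L^n$ upgrades automatically to linear convergence in $\mathcal{B}(X)$. The implication \ref{lem:fd:iff:iv}$\Rightarrow$\ref{lem:fd:iff:iii} is routine: if $\|L^n-P_{\fix L}\|\le M\mu^n$, then for every $x\in X$ we have $\|L^nx-P_{\fix L}x\|\le (M\|x\|)\mu^n$, so pointwise linear convergence holds with the same rate $\mu$. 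Finally \ref{lem:fd:iff:iii}$\Rightarrow$\ref{lem:fd:iff:i} is trivial, since pointwise linear convergence implies in particular pointwise convergence.

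The only subtlety is the interplay between the two notions of ``linear'' (pointwise versus in $\mathcal{B}(X)$), where the rate $\mu$ in \ref{lem:fd:iff:iii} is a priori allowed to depend on the vector. The route through Corollary~\ref{F:lin:rate:Rn} sidesteps this entirely, so there is no genuine obstacle; an alternative finite-dimensional argument would fix a basis $e_1,\dots,e_d$, extract rates $\mu_i$ on each basis vector, and apply Lemma~\ref{lem:rate:UBP} with $\mu:=\max_i\mu_i\in\left[0,1\right[$ to synchronize the rates into a single uniform one.
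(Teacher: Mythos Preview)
Your proposal is correct and follows essentially the same route as the paper: both use Fact~\ref{F:lin:ar} for \ref{lem:fd:iff:0}$\Leftrightarrow$\ref{lem:fd:iff:i}, Lemma~\ref{lem:fd:iff:c} for \ref{lem:fd:iff:i}$\Leftrightarrow$\ref{lem:fd:iff:ii}, and Corollary~\ref{F:lin:rate:Rn} for \ref{lem:fd:iff:ii}$\Rightarrow$\ref{lem:fd:iff:iv}. The only cosmetic difference is that the paper links \ref{lem:fd:iff:iii} and \ref{lem:fd:iff:iv} directly via Lemma~\ref{lem:rate:UBP}, whereas you close the loop through the trivial implications \ref{lem:fd:iff:iv}$\Rightarrow$\ref{lem:fd:iff:iii}$\Rightarrow$\ref{lem:fd:iff:i}; both are equally valid.
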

\begin{proof}
``\ref{lem:fd:iff:0}$\siff$\ref{lem:fd:iff:i}":
This follows from \cref{F:lin:ar}.
``\ref{lem:fd:iff:i}$\siff$\ref{lem:fd:iff:ii}":
Combine \cref{lem:fd:iff:c} and \cref{F:lin:ar}.
``\ref{lem:fd:iff:ii}$\RA$\ref{lem:fd:iff:iv}":
Apply \cref{F:lin:rate:Rn} with ${L_{\infty}}$
replaced by $ P_{\fix L}$.
``\ref{lem:fd:iff:iv}$\RA$\ref{lem:fd:iff:ii}":
This is obvious.
``\ref{lem:fd:iff:iii}$\siff$\ref{lem:fd:iff:iv}":
Apply \cref{lem:rate:UBP} to the sequence $(L^n)_\nnn$
and use \cref{F:lin:ar}.
\end{proof}
 \begin{thm}
\label{cor:rate}
Let $\linop\colon X\to X$ be linear and nonexpansive, 
let $b\in X$, set
$T\colon X\to X\colon x\mapsto \linop x + b$
 and let $\mu\in \left]0,1\right[$.
Then the following are equivalent:
\begin{enumerate}
\item
\label{cor:rate:i}
$T^n \to P_{\fix T}$  $\mu$-linearly
 pointwise $(\text{in}~ X)$.
\item
\label{cor:rate:ii}
$L^n  \to P_{\fix L}$
 $\mu$-linearly  pointwise $(\text{in}~ X)$.
 \item
 \label{cor:rate:iii}
$L^n  \to P_{\fix L}$  $\mu$-linearly $(\text{in} ~\mathcal{B}(X))$.
\end{enumerate}
\end{thm}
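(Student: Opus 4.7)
The plan is to reduce the affine claim to the already-established linear facts by means of the ``conjugation'' identity from \cref{lem:shortcut}, and then to invoke \cref{lem:rate:UBP} to bridge pointwise linear rates and operator-norm linear rates. Implicit throughout is that $\fix T\neq\fady$, which is needed in order for $P_{\fix T}$ in \ref{cor:rate:i} to make sense; this hypothesis activates \cref{lem:shortcut}.

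First I would apply \cref{lem:shortcut} to produce $a\in X$ with $b=a-La$ such that, for every $x\in X$ and every $n\in\NN$,
\begin{equation*}
T^n x = a + L^n(x-a) \qquad\text{and}\qquad P_{\fix T}x = a + P_{\fix L}(x-a).
\end{equation*}
Subtracting gives the key identity
\begin{equation*}
T^n x - P_{\fix T}x = L^n(x-a) - P_{\fix L}(x-a),
\end{equation*}
so the norms $\norm{T^n x-P_{\fix T}x}$ and $\norm{L^n(x-a)-P_{\fix L}(x-a)}$ coincide. For the equivalence \ref{cor:rate:i}$\siff$\ref{cor:rate:ii}, I would observe that the map $x\mapsto x-a$ is a bijection on $X$: given \ref{cor:rate:i}, for arbitrary $y\in X$ set $x=y+a$ in the identity to obtain $\mu$-linear convergence of $L^n y$ to $P_{\fix L}y$; the reverse direction is symmetric, setting $y=x-a$.

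For \ref{cor:rate:ii}$\siff$\ref{cor:rate:iii}, I would apply \cref{lem:rate:UBP} with $Y=X$, $L_n := L^n$, and $L_\infty := P_{\fix L}$. The operator $P_{\fix L}$ belongs to $\mathcal{B}(X)$ because $\fix L$ is a closed linear subspace (as $L$ is linear, $0\in\fix L$ and $\fix L$ is closed under linear combinations). Then $(\forall y\in X)\;L^n y\to P_{\fix L}y\ \mu$-linearly is exactly equivalent to $L^n\to P_{\fix L}\ \mu$-linearly in $\mathcal{B}(X)$, which is \ref{cor:rate:iii}.

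There is essentially no substantive obstacle; the proof is a bookkeeping exercise built on the two previous results. The one subtlety worth double-checking is that the additive shift by the fixed-point translate $a$ really preserves the $\mu$-linear rate --- but this is immediate from the identity above, since the quantities being bounded are literally equal, with no $a$-dependent constant appearing in either bound.
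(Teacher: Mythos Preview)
Your proposal is correct and follows essentially the same route as the paper: use \cref{lem:shortcut}\ref{lem:shortcut:ii}\&\ref{lem:shortcut:iii} to obtain the identity $T^n x - P_{\fix T}x = L^n(x-a) - P_{\fix L}(x-a)$ for \ref{cor:rate:i}$\siff$\ref{cor:rate:ii}, and then apply \cref{lem:rate:UBP} for \ref{cor:rate:ii}$\siff$\ref{cor:rate:iii}. Your write-up is in fact a bit more explicit than the paper's (you spell out the bijection $x\mapsto x-a$ and the reason $P_{\fix L}\in\mathcal{B}(X)$), and your observation that $\fix T\neq\fady$ is an implicit hypothesis is well taken.
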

\begin{proof}
``\ref{cor:rate:i}$\siff$\ref{cor:rate:ii}":
It follows from  
\cref{lem:shortcut}\ref{lem:shortcut:iii}\&\ref{lem:shortcut:ii}
that $T^n x-P_{\fix T}x=a+L^n(x-a)-(a+P_{\fix L}(x-a))=
L^n(x-a)-P_{\fix L}(x-a)\to 0$, by \cref{F:lin:ar}.
``\ref{cor:rate:ii}$\siff$\ref{cor:rate:iii}":
Combine \cref{lem:rate:UBP} and \cref{F:lin:ar}.
\end{proof}
 
 \begin{cor}
 \label{cor:rate:Rd}
Suppose that $X $
 is finite-dimensional.
 Let $\linop\colon X\to X$ be linear, nonexpansive
 and asymptotically regular,
let $b\in X$, set
$T\colon X\to X\colon x\mapsto \linop x + b$
and suppose that $\fix T\neq\fady$.  
 Then $T^n \to P_{\fix T}$
 pointwise linearly. 
 \end{cor}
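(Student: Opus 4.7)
The plan is simply to combine the two main results of this section: \cref{lem:fd:iff} upgrades asymptotic regularity of the linear part $L$ to linear convergence in operator norm (in finite dimensions), and \cref{cor:rate} then transfers this rate from $L$ to its affine perturbation $T$. The whole argument is a short chain of two implications.

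First I would note that since $L$ is linear, $0\in\fix L$, so $\fix L\neq\fady$ and $P_{\fix L}$ is well defined. Given that $X$ is finite-dimensional and $L$ is linear, nonexpansive, and asymptotically regular, the implication \ref{lem:fd:iff:0}$\Rightarrow$\ref{lem:fd:iff:iv} of \cref{lem:fd:iff} produces $\mu\in \left[0,1\right[$ and $M\ge 0$ with $\|L^n-P_{\fix L}\|\le M\mu^n$ for every $\nnn$. If $\mu=0$, replace it by any element of $\left]0,1\right[$; the inequality continues to hold. This is exactly condition \ref{cor:rate:iii} of \cref{cor:rate}. Together with the standing hypothesis $\fix T\neq\fady$, the implication \ref{cor:rate:iii}$\Rightarrow$\ref{cor:rate:i} of \cref{cor:rate} then yields $T^n\to P_{\fix T}$ $\mu$-linearly pointwise, which is the desired conclusion.

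I do not expect any real obstacle: the heavy lifting already sits in \cref{lem:fd:iff} (where finite-dimensionality converts pointwise convergence into uniform-in-norm convergence and, via \cref{F:lin:rate:Rn}, also supplies the linear rate) and in \cref{cor:rate} (where the translation identity $T^n x=a+L^n(x-a)$ from \cref{lem:shortcut}\ref{lem:shortcut:iii} absorbs the affine term $b$). The only small checkpoint is the degenerate case $\mu=0$, which forces finite-step convergence for each orbit and is handled by the trivial enlargement of $\mu$ above.
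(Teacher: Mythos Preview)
Your proof is correct and follows essentially the same route as the paper: the paper invokes \cref{F:lin:ar} and then \cref{F:lin:rate:Rn} separately to obtain $L^n\to P_{\fix L}$ linearly, whereas you use the packaged equivalence \ref{lem:fd:iff:0}$\Rightarrow$\ref{lem:fd:iff:iv} of \cref{lem:fd:iff}, which is exactly the combination of those two facts; both finish by applying \cref{cor:rate}. Your explicit handling of the degenerate case $\mu=0$ (needed because \cref{cor:rate} is stated only for $\mu\in\,]0,1[\,$) is a nice touch that the paper leaves implicit.
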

 
\begin{proof}
 It follows from \cref{F:lin:ar} that $L^n \to P_{\fix L}$ 
 pointwise. 
 Consequently, by \cref{F:lin:rate:Rn}, 
 $L^n \to P_{\fix L}$ linearly. 
Now apply \cref{cor:rate}. 
\end{proof}

\section{Attouch-Th\'{e}ra duality}

\label{s:AT}

Recall that a possibly set-valued operator $A:X\rras X$
is \emph{monotone} if for any two points 
$(x,u) $ and $(y,v)$ in the \emph{graph}
of $A$, denoted $\gra A$, we have 
$\innp{x-y,u-v}\ge 0$; $A$ is \emph{maximally monotone}
if there is no proper extension of $\gra A$ 
that preserves the monotonicity of $A$.
The \emph{resolvent}\footnote{It is well-known 
for
 a maximally monotone operator $A:X\rras X$ that 
 $J_A$ 
 is firmly nonexpansive and $R_A$ is nonexpansive 
 (see, e.g. \cite[Corollary~23.10(i)~and~(ii)]{BC2011}).}
of $A$, denoted by $J_A$, is defined by
$J_A=(\Id+A)^{-1}$ while the \emph{reflected resolvent}
of $A$ is $R_A=2J_A-\Id$.
In the following, we assume that
\begin{empheq}[box=\mybluebox]{equation*}
A:X\rras X  \text{~and ~} B:X\rras X 
\text{~are maximally monotone.}
\end{empheq} 
The \emph{Attouch-Th\'{e}ra} (see \cite{AT}) dual pair to the
primal pair $(A,B)$
is the pair\footnote{Let $A:X\rras X$. 
Then $A^\ovee=(-\Id)\circ A\circ (-\Id)$
and $A^{-\ovee}={(A^{-1}})^\ovee=(A^\ovee)^{-1}$.}
 $(A^{-1},B^{-\ovee})$.
 The \emph{primal} problem associated with 
$(A,B)$ is to 
 \begin{equation}
 \label{eq:def:prim:prom}
\text{find~} x\in X\text{~such that ~}0\in Ax+Bx, 
\end{equation}
and its
Attouch-Th\'{e}ra \emph{dual} problem is to
 \begin{equation}
 \label{eq:def:dual:prom}
\text{find~} x\in X\text{~such that ~}0\in A^{-1}x+B^{-\ovee}x. 
\end{equation}
We shall use $Z$ and $K$
to denote the sets of primal and dual
solutions of \cref{eq:def:prim:prom} and 
\cref{eq:def:dual:prom}
 respectively, i.e.,
\begin{equation}
\label{eq:def:ZK}
Z=Z_{(A,B)}=(A+B)^{-1}(0) \quad \text{and} 
\quad K=K_{(A,B)}=(A^{-1}+B^{-\ovee})^{-1}(0).
\end{equation}
The \emph{Douglas-Rachford} operator 
for the ordered pair $(A,B)$
(see \cite{L-M79})
is defined by
\begin{equation}
\label{e:defofDR}
T_{\DR}=T_{\DR}(A,B)=\Id-J_A+J_BR_A=\tfrac{1}{2}(\Id+R_BR_A).
\end{equation}
We recall that $C:X\rras X$ 
is \emph{paramonotone}\footnote{For a detailed discussion on
paramonotone operators we refer the reader to \cite{Iusem98}.}
 if 
it is monotone and $(\forall (x,u)\in \gra C)$
$(\forall (y,v)\in \gra C)$ we have
\begin{equation}
\left.
\begin{array}{c}
 (x,u)\in \gra C\\
(y,v)\in \gra C\\
\innp{x-y,u-v}=0
\end{array}
\right\}
\quad\RA\quad 
\big\{(x,v),(y,u)\big\}\subseteq \gra C.
\end{equation}

\begin{example}
\label{ex:para:goodsub}
Let $f:X\to\left]-\infty,+\infty\right]$ be 
proper, convex and lower semicontinuous.
Then $\pt f$ is paramonotone by 
\cite[Proposition~2.2]{Iusem98} 
(or by \cite[Example~22.3(i)]{BC2011}).
\end{example}

\begin{example}
\label{ex:not:para:badskew}
Suppose that $X=\RR^2$ and that
$A:\RR^2\to \RR^2:(x,y)\mapsto (y,-x)$.
Then one can easily verify that
 $A$ and $-A$ are maximally monotone but \emph{not} 
paramonotone by \cite[Section~3]{Iusem98}
(or \cite[Theorem~4.9]{BWY2014}).
\end{example}

\begin{fact}
\label{F:DR:FB}
The following hold: 
\begin{enumerate}
\item
\label{F:DR:FB:i}
$T_{\DR}$ is firmly nonexpansive.
\item
\label{F:DR:FB:ii}
$\zer(A+B)=J_A(\fix T_{\DR})$.
\end{enumerate}
If $A$ and $B$ are paramonotone, then we have additionally:
 \begin{enumerate}
 \setcounter{enumi}{2}
\item
\label{F:DR:FB:iii}
$\fix T_{\DR}=Z+K$.
\item
\label{F:DR:FB:iv}
$(K-K)\perp (Z-Z)$.
\end{enumerate}

\end{fact}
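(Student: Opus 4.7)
The plan is to treat the four parts in order, since each builds on the previous. For \ref{F:DR:FB:i}, I would combine the identity $T_\DR = \tfrac{1}{2}(\Id + R_B R_A)$ from \cref{e:defofDR} with two standard facts: $R_A$ and $R_B$ are nonexpansive (so is their composition), and $\tfrac{1}{2}(\Id + N)$ is firmly nonexpansive for any nonexpansive $N$. For \ref{F:DR:FB:ii}, I would unpack the fixed-point equation: $x \in \fix T_\DR$ is equivalent to $J_A x = J_B R_A x$. Setting $y := J_A x$, the resolvent characterizations give $x - y \in Ay$ and $R_A x - y = y - x \in By$, which add to $0 \in Ay + By$, so $y \in Z$. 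Conversely, if $y \in \zer(A+B)$ with $u \in Ay$ and $-u \in By$, a short check shows $x := y + u$ satisfies $J_A x = y$, $R_A x = y - u$, $J_B R_A x = y$, and hence $T_\DR x = x$.

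The principal obstacle is \ref{F:DR:FB:iii}. The inclusion $\fix T_\DR \subseteq Z + K$ should fall out of the computation in \ref{F:DR:FB:ii} without any paramonotonicity assumption: for $x \in \fix T_\DR$, write $x = z + u$ with $z := J_A x \in Z$ and $u := x - z$; the relations $u \in Az$ and $-u \in Bz$ rewrite as $z \in A^{-1} u$ and $-z \in B^{-\ovee} u$, witnessing $u \in K$. The reverse inclusion $Z + K \subseteq \fix T_\DR$ is where the paramonotonicity hypothesis enters. Given $z \in Z$ with $w \in Az \cap (-Bz)$ and $k \in K$ with some $y$ satisfying $k \in Ay$ and $-k \in By$, monotonicity of $A$ applied to $(z,w),(y,k) \in \gra A$ and monotonicity of $B$ applied to $(z,-w),(y,-k) \in \gra B$ yield $\innp{z-y, w-k} \geq 0$ together with $\innp{z-y, k-w} \geq 0$, forcing both inner products to vanish. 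Paramonotonicity of $A$ then promotes $(z, k) \in \gra A$, and analogously paramonotonicity of $B$ yields $(z,-k) \in \gra B$. With $k \in Az$ and $-k \in Bz$ in hand, a short direct check (identical in structure to the converse direction of \ref{F:DR:FB:ii}) confirms $T_\DR(z+k) = z + k$.

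For \ref{F:DR:FB:iv}, the structural payoff of the argument for \ref{F:DR:FB:iii} is that, under paramonotonicity, every $k \in K$ automatically lies in $Az$ for every $z \in Z$. Thus for any $z_1, z_2 \in Z$ and $k_1, k_2 \in K$, all four pairs $(z_i, k_j)$ belong to $\gra A$, and applying monotonicity of $A$ to the two crosswise pairings $\{(z_1, k_1),(z_2, k_2)\}$ and $\{(z_1, k_2),(z_2, k_1)\}$ gives $\innp{z_1 - z_2, k_1 - k_2} \geq 0$ together with $\innp{z_1 - z_2, k_2 - k_1} \geq 0$, forcing $\innp{z_1 - z_2, k_1 - k_2} = 0$ and hence $(K - K) \perp (Z - Z)$.
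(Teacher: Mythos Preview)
Your arguments for all four parts are correct. The paper, however, treats this result as a \emph{Fact} and simply cites the literature: \ref{F:DR:FB:i} to Lions--Mercier and \cite[Proposition~4.21(ii)]{BC2011}, \ref{F:DR:FB:ii} to \cite[Lemma~2.6(iii)]{Comb04} and \cite[Proposition~25.1(ii)]{BC2011}, and \ref{F:DR:FB:iii}--\ref{F:DR:FB:iv} to \cite[Corollary~5.5]{JAT2012}. So there is no proof in the paper to compare against beyond pointers.

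What you have written is essentially the standard content of those cited proofs, reconstructed cleanly: the averaged-operator argument for \ref{F:DR:FB:i}; the resolvent unpacking $x\in\fix T_\DR \Leftrightarrow J_Ax=J_BR_Ax$ for \ref{F:DR:FB:ii}; the observation that $\fix T_\DR\subseteq Z+K$ holds without paramonotonicity while the reverse inclusion uses the ``sandwich'' $\innp{z-y,w-k}\ge 0$ and $\innp{z-y,k-w}\ge 0$ to force equality and then invokes paramonotonicity of both $A$ and $B$; and the clever extraction for \ref{F:DR:FB:iv} that paramonotonicity yields $K\subseteq Az$ for \emph{every} $z\in Z$, so that cross-pairings in $\gra A$ give the orthogonality. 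Your write-up is self-contained where the paper is not; the trade-off is that the paper can lean on established references while you have done the work explicitly.
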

\begin{proof}
\ref{F:DR:FB:i}:
See \cite[Lemma~1]{L-M79}, \cite[Corollary~4.2.1]{EckThesis}, 
or \cite[Proposition~4.21(ii)]{BC2011}.
\ref{F:DR:FB:ii}:
See \cite[Lemma~2.6(iii)]{Comb04} or \cite[Proposition~25.1(ii)]{BC2011}.
\ref{F:DR:FB:iii}: See \cite[Corollary~5.5(iii)]{JAT2012}.
\ref{F:DR:FB:iv}: See \cite[Corollary~5.5(iv)]{JAT2012}.
\end{proof}

\begin{lem}
\label{lem:Brett:pw}
Suppose that $A$ and $B$ are paramonotone.
Let $k\in K$ be such that $(\forall z\in Z)$
$J_A(z+k)=P_Z(z+k)$. Then $k\in (Z-Z)^\perp$.
\end{lem}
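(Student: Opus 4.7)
The plan is to prove the lemma in two main steps: first, use paramonotonicity of $A$ and $B$ to establish the identity $J_A(z+k) = z$ for every $z\in Z$ and every $k\in K$; then combine this with the hypothesis to get $z = P_Z(z+k)$ and deduce $k\in (Z-Z)^\perp$ by the projection characterization together with a symmetry swap.

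For the first step I would unfold the definition $K = (A^{-1}+B^{-\ovee})^{-1}(0)$ to produce a witness $y\in X$ satisfying $k\in Ay$ and $-k\in By$; in particular $y\in Z$. Given an arbitrary $z\in Z$, choose $a\in Az\cap(-Bz)$, which exists since $0\in Az+Bz$. Monotonicity of $A$ applied to $(y,k),(z,a)\in\gra A$ yields $\langle z-y, a-k\rangle\ge 0$, while monotonicity of $B$ applied to $(y,-k),(z,-a)\in\gra B$ yields the opposite inequality $\langle z-y, k-a\rangle\ge 0$. These two nonnegative quantities sum to $0$, so each vanishes; paramonotonicity of $A$ then upgrades $\langle z-y, a-k\rangle=0$ to $(z,k)\in\gra A$, i.e., $k\in Az$. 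Because $(z+k)-z = k\in Az$, single-valuedness of $J_A$ gives $J_A(z+k) = z$.

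For the second step the hypothesis now reads $z = P_Z(z+k)$ for every $z\in Z$, so the variational characterization of the projection onto the closed convex set $Z$ yields $\langle k, w-z\rangle\le 0$ for all $w\in Z$. Since this holds for every $z,w\in Z$, interchanging the roles of $z$ and $w$ produces the reverse inequality; consequently $\langle k, w-z\rangle = 0$ for all $z,w\in Z$, which is precisely $k\in (Z-Z)^\perp$.

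The main obstacle is the first step: one has to recognize that the monotonicity contributions from $A$ and from $B$ are opposite in sign and therefore cancel, pinning down equality in both and unlocking paramonotonicity to conclude $(z,k)\in\gra A$. Once the identity $J_A(z+k)=z$ is in hand, the remainder is a routine manipulation of the projection characterization, and neither \cref{F:DR:FB} nor any further structural result about $T_{\DR}$ is needed.
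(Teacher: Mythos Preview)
Your proposal is correct and follows essentially the same route as the paper's proof: establish $J_A(z+k)=z$ for every $z\in Z$, combine with the hypothesis to get $z=P_Z(z+k)$, and then use the variational inequality for the projection together with the swap $z\leftrightarrow w$. The only difference is that the paper obtains $J_A(z+k)=z$ by citing \cite[Theorem~4.5]{JAT2012}, whereas you unfold that argument directly via the cancellation of the two monotonicity inequalities and paramonotonicity of $A$; your self-contained derivation is precisely the relevant content of that cited theorem.
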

\begin{proof}
By \cref{F:DR:FB}\ref{F:DR:FB:iii}, 
$\fix T_{\DR}=Z+K$. 
Let $z_1$ and $z_2$ be in $Z$. 
It follows from \cite[Theorem~4.5]{JAT2012}
that $(\forall z\in Z)~J_A(z+k)= z $.
Therefore, 
 \begin{equation}
 (\forall i\in\{1,2\})
\quad z_i+k\in \fix T_{\DR}
\;\text{and}\;
 z_i =J_A(z_i+k)
=P_Z(z_i+k). 
 \end{equation}
Furthermore, the Projection Theorem
 (see, e.g., \cite[Theorem~3.14]{BC2011}) yields
 \begin{equation}
 \innp{k,z_1-z_2}= \innp{z_1+k-z_1,z_1-z_2}
 =\innp{z_1+k-P_Z (z_1+k),P_Z(z_1+k)-z_2}\ge 0.
 \end{equation}
On the other hand, interchanging the roles
of $z_1$ and $z_2$ yields $ \innp{k,z_2-z_1}\ge 0$.
 Altogether, $ \innp{k,z_1-z_2}=0$.
\end{proof}

The next result relates the Douglas-Rachford operator to
orthogonal properties of primal and dual solutions. 

\begin{thm}
\label{prop:Brett:result}
Suppose that $A$ and $B$ are paramonotone.
Then the following are equivalent:
\begin{enumerate}
\item
\label{prop:Brett:result:iii}
$J_AP_{\fix T_{\DR}}=P_Z$.
\item
\label{prop:Brett:result:i}
${J_A}{ |}_{_{\fix T_{\DR}}}={P_Z}{|}_{_{\fix T_{\DR}}}$.
\item
\label{prop:Brett:result:ii}
$K\perp (Z-Z)$.
\end{enumerate}
\end{thm}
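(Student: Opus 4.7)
My plan is to prove the cyclic chain (i)$\Rightarrow$(ii)$\Rightarrow$(iii)$\Rightarrow$(i). The first implication is immediate: $P_{\fix T_{\DR}}$ acts as the identity on $\fix T_{\DR}$, so restricting (i) to points of $\fix T_{\DR}$ produces (ii). For (ii)$\Rightarrow$(iii), fix $k\in K$. \cref{F:DR:FB}\ref{F:DR:FB:iii} tells us that $z+k\in\fix T_{\DR}$ for every $z\in Z$, so hypothesis (ii) gives $J_A(z+k)=P_Z(z+k)$ for all $z\in Z$; \cref{lem:Brett:pw} then delivers $k\in(Z-Z)^\perp$. Since $k\in K$ was arbitrary, this is (iii).

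The substantive step is (iii)$\Rightarrow$(i). Fix $x\in X$. Using $\fix T_{\DR}=Z+K$ from \cref{F:DR:FB}\ref{F:DR:FB:iii}, write $P_{\fix T_{\DR}}x=\bar z+\bar k$ with $\bar z\in Z$ and $\bar k\in K$. By \cite[Theorem~4.5]{JAT2012}---the result already invoked inside the proof of \cref{lem:Brett:pw}---one has $J_A(z+k)=z$ for every $(z,k)\in Z\times K$, so $J_A P_{\fix T_{\DR}}x=\bar z$, and the task reduces to identifying $\bar z$ with $P_Zx$.

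To achieve this, I pick any $z_0\in Z$ and set $V:=\cspan(Z-Z)$; then $Z\subseteq z_0+V$, while hypothesis (iii) rephrases as $K\subseteq V^\perp$. Putting $y:=P_{z_0+V}x=z_0+P_V(x-z_0)$, one has $z-y\in V$ for every $z\in Z$, and both $k$ and $x-y=P_{V^\perp}(x-z_0)$ lie in $V^\perp$ for every $k\in K$, so Pythagoras gives
\begin{equation*}
\|(z+k)-x\|^2=\|z-y\|^2+\|k-(x-y)\|^2\quad\text{on}~Z\times K.
\end{equation*}
The two summands can be minimised independently, yielding $\bar z=P_Zy$ (and $\bar k=P_K(x-y)$). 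Finally, because $Z$ is contained in the closed affine subspace $z_0+V$ and $y=P_{z_0+V}x$, projection onto $Z$ factors through that subspace, so $P_Zx=P_Zy=\bar z$, which closes the chain.

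The main obstacle is this decoupling in the (iii)$\Rightarrow$(i) step: the standing orthogonality $(K-K)\perp(Z-Z)$ from \cref{F:DR:FB}\ref{F:DR:FB:iv} is not strong enough to separate the joint minimisation over $Z\times K$, and the essential use of hypothesis (iii) is precisely to upgrade this to $K\subseteq V^\perp$, which is what licenses the Pythagorean split. Once that split is in place, the routine observation that projection onto a subset of an affine subspace factors through the projection onto that subspace completes the argument.
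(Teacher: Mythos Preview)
Your proof is correct, and for the implications \ref{prop:Brett:result:iii}$\Rightarrow$\ref{prop:Brett:result:i} and \ref{prop:Brett:result:i}$\Rightarrow$\ref{prop:Brett:result:ii} it coincides exactly with the paper's argument: the first is immediate, and the second is precisely the combination of \cref{F:DR:FB}\ref{F:DR:FB:iii} with \cref{lem:Brett:pw}.

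Where you diverge is in \ref{prop:Brett:result:ii}$\Rightarrow$\ref{prop:Brett:result:iii}. The paper dispatches this step in one line by invoking \cite[Theorem~6.7(ii)]{JAT2012}, whereas you supply a self-contained Pythagorean argument: you embed $Z$ in the closed affine subspace $z_0+V$ with $V=\cspan(Z-Z)$, use the hypothesis $K\subseteq V^\perp$ to decouple the minimisation of $\|(z+k)-x\|^2$ over $Z\times K$ into two independent projections, identify the $z$-component of any minimiser with $J_AP_{\fix T_{\DR}}x$ via \cite[Theorem~4.5]{JAT2012}, and finally use the factoring property $P_Zx=P_Z(P_{z_0+V}x)$ to conclude. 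This is a legitimate and transparent alternative; its merit is that it makes explicit exactly how the orthogonality hypothesis enters (the upgrade from $(K-K)\perp(Z-Z)$ to $K\subseteq V^\perp$ is what licenses the split), at the cost of being longer than a citation. Both routes rely on the same external ingredient \cite[Theorem~4.5]{JAT2012} for the identity $J_A(z+k)=z$ on $Z\times K$, so neither is more elementary in that respect.
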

\begin{proof}
``\ref{prop:Brett:result:iii}$\RA$\ref{prop:Brett:result:i}": 
This is obvious.
``\ref{prop:Brett:result:i}$\RA$\ref{prop:Brett:result:ii}": 
Let $k\in K$ and let $z\in Z$. 
Then $\fix T_{\DR}=Z+K$ by 
\cref{F:DR:FB}\ref{F:DR:FB:iii};
hence, $z+k\in \fix T_{\DR}$.
Therefore $J_A(z+k)=P_Z(z+k)$.
Now apply \cref{lem:Brett:pw}.
``\ref{prop:Brett:result:ii}$\RA$\ref{prop:Brett:result:iii}":  This follows from
\cite[Theorem~6.7(ii)]{JAT2012}.
\end{proof}

\begin{cor}
\label{cor:aff:subsp:ZK}
Let \footnote{Let $C$ be nonempty closed 
convex subset of $X$.
Then $J_{N_C}=P_C$ by, e.g., \cite[Example~23.4]{BC2011}.}
 $U$ be a closed affine subspace of 
$X$, suppose that $A=N_U$ and that $B$ is paramonotone
such that $Z\neq \fady$.
Then the following hold\footnote{Suppose that
$U$ is a closed affine subspace of $X$.
We use $\parl U$ to denote the \emph{parallel space}
of $U$ defined by $\parl U=U-U$.}:
\begin{enumerate}
\item
\label{cor:aff:subsp:ZK:0}
$Z=U\cap (B^{-1}(\parl U)^\perp)\subseteq U$.
\item
\label{cor:aff:subsp:ZK:i}
$(\forall z\in Z)$ 
$K=(-Bz)\cap (\parl U)^\perp\subseteq (\parl U)^\perp$.
\item
\label{cor:aff:subsp:ZK:ii}
$K\perp (Z-Z)$.
\item
\label{cor:aff:subsp:ZK:iii}
$J_AP_{\fix T_{\DR}}=P_UP_{\fix T_{\DR}}=P_Z$.
\end{enumerate}
\end{cor}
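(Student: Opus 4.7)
My plan is to first unpack the structure induced by $A=N_U$. Since $U$ is a closed affine subspace with parallel space $\parl U$, we have $J_A=J_{N_U}=P_U$, and the graph of $A=N_U$ satisfies $N_U(u)=(\parl U)^\perp$ for $u\in U$ and $N_U(u)=\varnothing$ otherwise; hence $A^{-1}(y)=U$ when $y\in(\parl U)^\perp$ and $\varnothing$ otherwise, and $B^{-\ovee}y=-B^{-1}(-y)$ by the footnote definition. All four items will follow from a careful bookkeeping on top of these identifications.

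For \ref{cor:aff:subsp:ZK:0}, I would expand $0\in Ax+Bx$ to mean the existence of $u\in N_U(x)$ and $v\in Bx$ with $u+v=0$, forcing $x\in U$ and $-v=u\in(\parl U)^\perp$, so $Bx\cap(\parl U)^\perp\neq\varnothing$; this gives $Z\subseteq U\cap B^{-1}((\parl U)^\perp)$, and the reverse inclusion is by the same computation in reverse.

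For \ref{cor:aff:subsp:ZK:i}, a similar unpacking of the dual problem $0\in A^{-1}y+B^{-\ovee}y = A^{-1}y - B^{-1}(-y)$ shows that $y\in K$ iff $y\in(\parl U)^\perp$ and $-y\in Bw$ for some $w\in U$, and by \ref{cor:aff:subsp:ZK:0} that $w$ must actually lie in $Z$. Thus $K=\bigcup_{z\in Z}(-Bz)\cap(\parl U)^\perp$. The key point is to show that the set on the right is independent of $z\in Z$; I expect this to be the main obstacle of the proof, and it is where paramonotonicity enters. Given $z_1,z_2\in Z$ with $-y_1\in Bz_1$, $-y_2\in Bz_2$, and $y_1,y_2\in(\parl U)^\perp$, monotonicity of $B$ gives
\begin{equation*}
\innp{z_1-z_2,\,y_2-y_1}=\innp{z_1-z_2,(-y_1)-(-y_2)}\geq 0,
\end{equation*}
and since $z_1-z_2\in\parl U$ is orthogonal to $y_1,y_2\in(\parl U)^\perp$, both sides vanish. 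Paramonotonicity of $B$ then swaps: $-y_1\in Bz_2$ and $-y_2\in Bz_1$, proving $(-Bz_1)\cap(\parl U)^\perp = (-Bz_2)\cap(\parl U)^\perp$. Fixing any $z\in Z$ thus yields $K=(-Bz)\cap(\parl U)^\perp\subseteq(\parl U)^\perp$, as required.

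Items \ref{cor:aff:subsp:ZK:ii} and \ref{cor:aff:subsp:ZK:iii} are then quick consequences. Since $Z\subseteq U$ by \ref{cor:aff:subsp:ZK:0}, we have $Z-Z\subseteq\parl U$, and since $K\subseteq(\parl U)^\perp$ by \ref{cor:aff:subsp:ZK:i}, the orthogonality $K\perp(Z-Z)$ is immediate. Finally, for \ref{cor:aff:subsp:ZK:iii}, the identity $J_A=P_U$ gives $J_AP_{\fix T_{\DR}}=P_UP_{\fix T_{\DR}}$, while applying \cref{prop:Brett:result}, whose hypothesis ``$A$ and $B$ are paramonotone'' is met because $A=N_U=\pt\iota_U$ is paramonotone by \cref{ex:para:goodsub}, converts the orthogonality $K\perp(Z-Z)$ just established in \ref{cor:aff:subsp:ZK:ii} into $J_AP_{\fix T_{\DR}}=P_Z$. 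This completes the plan.
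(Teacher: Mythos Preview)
Your proof is correct and follows the paper's overall structure: item \ref{cor:aff:subsp:ZK:0} by unpacking $0\in Ax+Bx$ with $N_U(x)=(\parl U)^\perp$ for $x\in U$; items \ref{cor:aff:subsp:ZK:ii} and \ref{cor:aff:subsp:ZK:iii} exactly as the paper does, via $Z-Z\subseteq\parl U$, $K\subseteq(\parl U)^\perp$, and then \cref{prop:Brett:result}.

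The one substantive difference is in \ref{cor:aff:subsp:ZK:i}. The paper dispatches this in one line by invoking \cite[Remark~5.4]{JAT2012} applied to the dual pair $(A^{-1},B^{-\ovee})$, which directly gives $K=(Az)\cap(-Bz)=(\parl U)^\perp\cap(-Bz)$ for every $z\in Z$; the paramonotonicity argument is hidden inside that citation. You instead unpack the dual inclusion to get $K=\bigcup_{z\in Z}(-Bz)\cap(\parl U)^\perp$ and then prove by hand, using monotonicity plus the orthogonality $Z-Z\subseteq\parl U\perp(\parl U)^\perp$ and paramonotonicity of $B$, that the sets $(-Bz)\cap(\parl U)^\perp$ coincide for all $z\in Z$. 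Your route is more self-contained and makes explicit exactly where paramonotonicity of $B$ is used (and shows that paramonotonicity of $A$ is not actually needed for \ref{cor:aff:subsp:ZK:i} itself, only for the appeal to \cref{prop:Brett:result} in \ref{cor:aff:subsp:ZK:iii}); the paper's route is shorter but relies on the reader consulting \cite{JAT2012}.
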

\begin{proof}
Since $A=N_C=\partial \iota_C$, 
it is paramonotone by \cref{ex:para:goodsub}.
\ref{cor:aff:subsp:ZK:0}:
Let $x\in X$. Then $x\in Z \siff 0\in Ax+Bx=(\parl U)^\perp + Bx$
$\siff$ [$x\in U$ and there exists $y\in X$ such that
$y\in (\parl U)^\perp$ and $y\in Bx$] 
$\siff$ [$x\in U$ and there exists $y\in X$ such that
 $x\in B^{-1}y$ and $y\in (\parl U)^\perp$] 
 $\siff x\in U\cap B^{-1}((\parl U)^\perp)$.
 \ref{cor:aff:subsp:ZK:i}:
Let $z\in Z$. Applying \cite[Remark~5.4]{JAT2012}
to $(A^{-1}, B^{-\ovee})$ yields 
$K=(-Bz) \cap (Az)=(-Bz) \cap(\parl U)^\perp.$
\ref{cor:aff:subsp:ZK:ii}: 
By \ref{cor:aff:subsp:ZK:0} $Z-Z\subseteq U-U=\parl U$.
Now use \ref{cor:aff:subsp:ZK:i}. 
\ref{cor:aff:subsp:ZK:iii}:
Combine \ref{cor:aff:subsp:ZK:ii} and 
\cref{prop:Brett:result}.
\end{proof}

Using  \cite[Proposition~2.10]{JAT2012} we have 
\begin{equation}
\label{common:z}
\zer A\cap \zer B\neq \fady~ \siff~0\in K.
\end{equation}
\begin{thm}
\label{P:commz}
Suppose that $A$ and $B$ are 
paramonotone and that $\zer A\cap \zer B\neq\fady$.
Then the following hold:
\begin{enumerate}[]
\item 
\label{P:commz:i}
$Z=(\zer  A)\cap(\zer B)$ and $0\in K$.
\item
\label{P:commz:i:i}
$J_AP_{\fix T_{\DR}}=P_Z$.
\item
\label{P:commz:i:ii}
$K\perp(Z-Z)$.
\end{enumerate}
If, in addition, $A$ or
$B$ is single-valued, then we also have:
 \begin{enumerate}
 \setcounter{enumi}{3}
\item
\label{P:commz:ii}
$K=\stb{0}$.
\item
\label{P:commz:iii}
$\fix \TDR[]=(\zer  A)\cap(\zer B)
$.
\end{enumerate}
\end{thm}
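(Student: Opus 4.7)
The plan is to handle the five conclusions roughly in the order (i), (iii), (ii), (iv), (v), exploiting paramonotonicity and the results already collected in \cref{F:DR:FB} and \cref{prop:Brett:result}.

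For \ref{P:commz:i}, the assertion $0\in K$ is just \cref{common:z}. For the set equality, the inclusion $(\zer A)\cap(\zer B)\subseteq Z$ is immediate. For the reverse inclusion, I would take $z\in Z$ and fix $u\in X$ with $u\in Az$ and $-u\in Bz$; then pick any $z_{0}\in (\zer A)\cap(\zer B)$. Monotonicity of $A$ applied to $(z,u),(z_{0},0)\in\gra A$ gives $\innp{z-z_{0},u}\geq 0$, while monotonicity of $B$ applied to $(z,-u),(z_{0},0)\in\gra B$ gives $\innp{z-z_{0},-u}\geq 0$. Adding the two inequalities forces both to be zero. Paramonotonicity of $A$ then yields $(z,0)\in\gra A$, i.e., $z\in\zer A$; and paramonotonicity of $B$ analogously yields $z\in\zer B$.

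For \ref{P:commz:i:ii}, I would use \cref{F:DR:FB}\ref{F:DR:FB:iv}, which gives $(K-K)\perp (Z-Z)$, together with $0\in K$ from \ref{P:commz:i} to conclude $K\subseteq K-K$ and hence $K\perp(Z-Z)$. Then \ref{P:commz:i:i} follows at once by applying the implication \ref{prop:Brett:result:ii}$\Rightarrow$\ref{prop:Brett:result:iii} in \cref{prop:Brett:result}.

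For the single-valued case, \ref{P:commz:ii} comes from the identity $K=(Az)\cap(-Bz)$ valid for any $z\in Z$ (the same application of \cite[Remark~5.4]{JAT2012} to $(A^{-1},B^{-\ovee})$ used in the proof of \cref{cor:aff:subsp:ZK:i}). Choose $z\in Z=(\zer A)\cap(\zer B)$ as supplied by \ref{P:commz:i}. If $A$ is single-valued, then from $0\in Az$ we get $Az=\{0\}$, so $K\subseteq Az=\{0\}$; combined with $0\in K$, this gives $K=\{0\}$. The case where $B$ is single-valued is symmetric. Finally, \ref{P:commz:iii} is immediate from \cref{F:DR:FB}\ref{F:DR:FB:iii}: $\fix T_{\DR}=Z+K=Z+\{0\}=Z=(\zer A)\cap(\zer B)$.

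The only mildly delicate step is the paramonotonicity argument establishing $Z=(\zer A)\cap(\zer B)$; everything else is essentially bookkeeping on top of the facts already assembled in \cref{s:AT}.
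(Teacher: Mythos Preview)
Your proposal is correct and follows essentially the same approach as the paper. The only differences are cosmetic: for \ref{P:commz:i} you give a direct paramonotonicity argument for $Z=(\zer A)\cap(\zer B)$ where the paper simply cites \cite[Remark~5.4]{JAT2012}, and for \ref{P:commz:i:i} you invoke the internal \cref{prop:Brett:result} rather than the paper's external citation \cite[Corollary~6.8]{JAT2012}; your version is thus slightly more self-contained but otherwise identical in substance.
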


\begin{proof}
\ref{P:commz:i}:
Since $\zer A\cap \zer B\neq\fady$,
it follows from \cref{common:z} 
that $0\in K$. Now apply 
\cite[Remark~5.4]{JAT2012} to get $Z=A^{-1}(0)\cap B^{-1}(0)=
(\zer  A)\cap(\zer B)$.  
\ref{P:commz:i:i}:
This is \cite[Corollary~6.8]{JAT2012}.
\ref{P:commz:i:ii}:
Combine \cref{common:z} 
and 
\cref{F:DR:FB}\ref{F:DR:FB:iv}.
\ref{P:commz:ii}: Let $C\in \stb{A,B}$ be single-valued.
Using \ref{P:commz:i} we have $Z\subseteq \zer C$.
Suppose that $C=A$ and let $z\in Z$. 
We use \cite[Remark~5.4]{JAT2012}
applied to $(A^{-1},B^{-\ovee})$
to
learn that $K=(Az)\cap(-Bz)$. Therefore
$\stb{0}\subseteq K\subseteq Az\subseteq A(\zer A)=\stb{0}$.
A similar argument applies if $C=B$.
\ref{P:commz:iii}:
Combine \cref{F:DR:FB}\ref{F:DR:FB:iii} with 
\ref{P:commz:i} \& \ref{P:commz:ii}.
\end{proof}

\begin{rem}
The conclusion of \cref{P:commz}\ref{P:commz:i}
generalizes the setting of convex feasibility problems.
Indeed, suppose that $A=N_U$ and $B=N_V$,
where $U $ and $V $ are nonempty closed convex subsets of
$X$ such that $U\cap V\neq\fady$.
Then $Z=U\cap V = \zer A\cap\zer B$. 
\end{rem}

The assumptions that $A$ and $B$ are paramonotone
are critical in the conclusion of \cref{P:commz}\ref{P:commz:i} 
as we illustrate now.
\begin{example} 
Suppose that 
$X=\RR^2$, that $U=\RR\times \stb{0}$,
that 
$A=N_U$ and that $B:\RR^2\to \RR^2:(x,y)\mapsto (-y,x)$,
is the counterclockwise rotator in the plane by $\pi/2$.
Then one verifies that
$\zer A=U$, $\zer B=\stb{(0,0)}$, $Z=\zer(A+B)=U$; however
$(\zer  A)\cap(\zer B)=\stb{(0,0)}\neq U=Z$.
Note that $A$ is paramonotone by
\cref{ex:para:goodsub} while $B$ is \emph{not} paramonotone 
by \cref{ex:not:para:badskew}.
\end{example}
In view of \cref{common:z} and \cref{P:commz}\ref{P:commz:i:i}, 
when $A$ and $B$ are paramonotone,
we have the implication 
$
0\in K\RA J_AP_{\fix T_{\DR}}=P_Z
$.
However the converse implication is
 not true, as we show in the next example.

\begin{example}
Suppose that $a\in X\smallsetminus\stb{0}$,
that $A=\Id-2a$ and that $B=\Id$. Then $Z=\stb{a}$,
$(A^{-1},B^{-\ovee})=(\Id+2a,\Id)$, hence $K=\stb{-a}$,
$Z-Z=\stb{0}$ and therefore $K\perp(Z-Z)$ which implies that
$J_AP_{\fix T_{\DR}}=P_Z$ by \cref{prop:Brett:result}, but
$0\not\in K$.
\end{example}

If neither $A$ nor $B$ is 
single-valued, then the conclusion of 
\cref{P:commz}\ref{P:commz:ii}\&\ref{P:commz:iii}
may fail as we now illustrate. 

\begin{example}
Suppose that $X=\RR^2$, 
that $U=\RR\times \stb{0}$,
that\footnote{Let $u\in X$ and let $r>0$.
We use
$\BB{u}{r}$ to denote the closed ball
in $X$
centred  at $u$ with radius $r$.
We also use $\RR_+$
to denote the set of nonnegative real numbers
$\left[0,+\infty\right[$.
} 
$V=\BB{(0,1)}{1}$, 
that
$A=N_U$ and that $B=N_V$.
By \cite[Example~2.7]{JAT2012} 
$Z=U\cap V=\stb{(0,0)}$ and 
$K=N_{\overline{U-V}}(0)=\RR_{+}\cdot(0,1)\neq \stb{(0,0)}$.
Therefore $\fix T_{\DR}=\RR_{+}\cdot(0,1)
\neq \stb{(0,0)}=U\cap V=\zer A
\cap \zer B$.
\end{example}

Recall that the Passty's \emph{parallel sum} (see e.g., \cite{Passty86} or \cite[Section~24.4]{BC2011})
is defined by
\begin{equation}
\label{eq:def:pars}
A\infconv B =(A^{-1}+B^{-1})^{-1}.
\end{equation}
In view of \cref{eq:def:ZK} and 
\cref{eq:def:pars}, one readily verifies that
\begin{equation}
\label{eq:desc:K}
K=(A\infconv  B^\ovee)(0). 
\end{equation}

\begin{lem}
\label{lem:B:lin}
Suppose that $B:X\rras X$ is linear\footnote{
$A\colon X\rras X$ is a \emph{linear relation} if 
$\gra A$ is a linear subspace of $X\times X$.}. 
Then the following hold:
\begin{enumerate}
\item
\label{lem:B:lin:i-}
$B^\ovee = B$ and $B^{-\ovee} = B^{-1}$. 
\item
\label{lem:B:lin:i}
$(A^{-1}, B^{-\ovee})=(A^{-1}, B^{-1})$.
\item
\label{lem:B:lin:ii}
$K=(A\infconv  B)(0)$.
\end{enumerate}
\end{lem}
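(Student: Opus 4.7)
The plan is to derive \ref{lem:B:lin:i-} directly from the definition of a linear relation, then obtain \ref{lem:B:lin:i} and \ref{lem:B:lin:ii} as immediate consequences via the already-recorded identities.

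For \ref{lem:B:lin:i-}, I would unwind the definition $B^\ovee = (-\Id)\circ B\circ (-\Id)$, which at the level of graphs means $(x,u)\in\gra B^\ovee \siff (-x,-u)\in\gra B$. Since $\gra B$ is a linear subspace of $X\times X$, it is stable under the map $(x,u)\mapsto(-x,-u)$, so $(-x,-u)\in\gra B \siff (x,u)\in\gra B$. This gives $\gra B^\ovee=\gra B$, i.e., $B^\ovee=B$. The second identity then follows from the footnote definition $B^{-\ovee}=(B^\ovee)^{-1}=B^{-1}$.

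Part \ref{lem:B:lin:i} is then literally a substitution of the equality $B^{-\ovee}=B^{-1}$ from \ref{lem:B:lin:i-} into the pair $(A^{-1},B^{-\ovee})$.

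For \ref{lem:B:lin:ii}, I would invoke \cref{eq:desc:K}, which states $K=(A\infconv B^\ovee)(0)$, and replace $B^\ovee$ by $B$ using \ref{lem:B:lin:i-}. There is no real obstacle here; the only point one needs to be careful about is that all the manipulations are at the level of set-valued relations (i.e., graphs), which is exactly where the linearity hypothesis on $\gra B$ does its work.
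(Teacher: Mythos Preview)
Your argument is correct and is exactly the unpacking of ``straightforward from the definitions'' that the paper's one-line proof alludes to. There is nothing to add.
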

\begin{proof}
This is straightforward from the definitions. 
\end{proof}

Let $f:X\to \left]-\infty,+\infty\right]$ be proper, 
convex and lower semicontinuous.
In the following we make use of the well-known identity\footnote{Let
$f:X\to \left]-\infty,+\infty\right]$ 
be proper, convex and lower semicontinuous.
We use $f^*$ to denote the \emph{convex conjugate}
(a.k.a. Fenchel conjugate) of $f$, defined by 
$f^*:X\to \left]-\infty,+\infty\right]:x\mapsto 
\sup_{u\in X}(\innp{x,u}-f(x))$.
} (see, e.g., 
\cite[Corollary~16.24]{BC2011}):
\begin{equation}
\label{eq:inv:sub:conj}
(\pt f)^{-1}=\pt f^*.
\end{equation}

\begin{cor}[{\bf subdifferential operators}]
\label{cor:subd}
Let $f:X\to \left]-\infty,+\infty\right]$ and 
$g:X\to \left]-\infty,+\infty\right]$
be proper, convex and lower semicontinuous.
Suppose that $A=\pt f $ and  that $B=\pt g$. 
 Then the following hold\footnote{Let $f:X\to \left]-\infty,+\infty\right]$.
 Then $f^\veet:X\to \left]-\infty,+\infty\right]:x\mapsto f(-x)$.
 }:
 \begin{enumerate}
 \item
 \label{cor:subd:i}
 $Z=(\pt f^*\infconv \pt g^*)(0)$.
 \item
  \label{cor:subd:ii}
 $K=(\pt f\infconv  \pt g^\veet)(0)
$.
\item
 \label{cor:subd:iii}
Suppose that\footnote{Let $f:X\to \left]-\infty,+\infty\right]$
be proper. The \emph{set of minimizers of} $f$,
$\stb{x\in X~|~f(x)=\inf f(X)}$, is 
denoted by $\argmin f$.}
$\argmin f\cap \argmin  g\neq \fady$.
Then $Z=\pt f^*(0)\cap \pt g^*(0)$.
 \item
 \label{cor:subd:iv}
 Suppose that\footnote{Let $S$ be nonempty subset 
 of $X$. The \emph{strong relative interior} of $S$, denoted 
 by $\sri S$, is the interior with respect to the closed affine hull of 
 $S$.}
  $0\in \sri(\dom f-\dom g)$.
 Then\footnote{Let 
 $f:X\to \left]-\infty,+\infty\right]$ 
 and $g:X\to \left]-\infty,+\infty\right]$ be 
proper, convex and lower semicontinuous.
The \emph{infimal convolution} of $f$ and $g$,
denoted by $f\infconv  g$, 
is the convex function 
$f\infconv  g:X\to \RR:x\mapsto \inf_{x\in X}\bk{f(y)+g(x-y)}$.} 
 $Z=\pt( f^*\infconv  g^*)(0)$.
  \item
 \label{cor:subd:v}
 Suppose that $0\in \sri(\dom f^*+\dom g^*)$.
 Then 
 $K=\pt( f\infconv  g^\veet)(0)$.

 \end{enumerate}

\end{cor}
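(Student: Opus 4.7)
The plan is to tackle the five assertions separately. Items (i) and (ii) fall out of the abstract descriptions of $Z$ and $K$ already derived in the paper, combined with the conjugate--inverse identity \cref{eq:inv:sub:conj}. Item (iii) specializes \cref{P:commz}. Items (iv) and (v) are obtained by invoking \cref{eq:inv:sub:conj} after the Attouch--Brezis / Fenchel--Rockafellar sum rule is applied under the appropriate constraint qualification.

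For (i), I would expand the parallel sum by \cref{eq:def:pars} and invert using \cref{eq:inv:sub:conj}:
\begin{equation*}
(\pt f^*\infconv\pt g^*)(0)=\bk{(\pt f^*)^{-1}+(\pt g^*)^{-1}}^{-1}(0)=(\pt f+\pt g)^{-1}(0)=(A+B)^{-1}(0)=Z.
\end{equation*}
For (ii), \cref{eq:desc:K} gives $K=(\pt f\infconv(\pt g)^\ovee)(0)$, so it suffices to verify the pointwise identity $(\pt g)^\ovee=\pt g^\veet$. This is a one-line check from the subgradient inequality after the change of variable $y\mapsto -y$, so no obstacle is expected here.

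Item (iii) reduces to \cref{P:commz}\ref{P:commz:i}. Indeed, \cref{eq:inv:sub:conj} gives $\argmin f=(\pt f)^{-1}(0)=\pt f^*(0)=\zer A$ and likewise $\argmin g=\zer B$, so the hypothesis is exactly $\zer A\cap\zer B\neq\fady$; because $A$ and $B$ are paramonotone by \cref{ex:para:goodsub}, \cref{P:commz}\ref{P:commz:i} yields $Z=\zer A\cap\zer B=\pt f^*(0)\cap\pt g^*(0)$. For (iv), the hypothesis $0\in\sri(\dom f-\dom g)$ is exactly the Attouch--Brezis constraint qualification, which delivers simultaneously $\pt(f+g)=\pt f+\pt g$ and $(f+g)^*=f^*\infconv g^*$ (with exact inf-convolution). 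Chaining these identities with \cref{eq:inv:sub:conj},
\begin{equation*}
Z=(\pt f+\pt g)^{-1}(0)=(\pt(f+g))^{-1}(0)=\pt(f+g)^*(0)=\pt(f^*\infconv g^*)(0).
\end{equation*}

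The main obstacle I anticipate is in (v): one must translate the hypothesis through the involution $\veet$ to recover a usable CQ. Since $\dom g^{*\veet}=-\dom g^*$, the assumption rewrites as $0\in\sri(\dom f^*-\dom g^{*\veet})$, which is the Attouch--Brezis CQ for the pair $(f^*,g^{*\veet})$ and therefore yields $\pt(f^*+g^{*\veet})=\pt f^*+\pt g^{*\veet}$ together with $(f^*+g^{*\veet})^*=f\infconv g^\veet$. Starting from (ii) and using $(\pt h)^{-1}=\pt h^*$ twice (once for $h=f$ and once for $h=g^\veet$, noting $(g^\veet)^*=g^{*\veet}$), the bookkeeping mirrors (iv):
\begin{equation*}
K=(\pt f\infconv\pt g^\veet)(0)=(\pt f^*+\pt g^{*\veet})^{-1}(0)=\pt(f^*+g^{*\veet})^*(0)=\pt(f\infconv g^\veet)(0).
\end{equation*}
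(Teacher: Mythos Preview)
Your proposal is correct and follows essentially the same approach as the paper for items \ref{cor:subd:i}--\ref{cor:subd:iii}. For \ref{cor:subd:iv} and \ref{cor:subd:v} there is a minor presentational difference worth noting: the paper simply combines \ref{cor:subd:i} (resp.\ \ref{cor:subd:ii}) with \cite[Proposition~24.27]{BC2011}, which under the stated constraint qualification directly identifies the parallel sum $\pt f^*\infconv\pt g^*$ with $\pt(f^*\infconv g^*)$ (resp.\ $\pt f\infconv\pt g^\veet$ with $\pt(f\infconv g^\veet)$); you instead pass through the subdifferential sum rule $\pt(f+g)=\pt f+\pt g$ and the conjugate formula $(f+g)^*=f^*\infconv g^*$ before inverting via \cref{eq:inv:sub:conj}. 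Both routes are standard consequences of the Attouch--Brezis theorem and lead to the same conclusion, so this is a difference in packaging rather than in substance.
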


\begin{proof}
Note that $A$ and $B$ are paramonotone 
by \cref{ex:para:goodsub}.
\ref{cor:subd:i}:
Using \cref{eq:inv:sub:conj} and \cref{eq:def:pars}
we have $Z=(A+B)^{-1}(0)=(((\pt f)^{-1})^{-1}+((\pt g)^{-1})^{-1})^{-1}(0)=
((\pt f^*)^{-1}+(\pt g^*)^{-1})^{-1}(0)=(\pt f^*\infconv \pt g^*)(0)$.
\ref{cor:subd:ii}:
Observe that $(\pt g)^{-\ovee}=((\pt g)^{\ovee})^{-1}=(\pt g^\veet)^{-1}$.
Therefore using \cref{eq:def:pars} we  have
$K=((\pt f)^{-1}+((\pt g)^{\ovee})^{-1})^{-1}(0)
=((\pt f)^{-1}+(\pt g^\veet)^{-1})^{-1}(0)
=(\pt f\infconv  \pt g^\veet)(0) $.
\ref{cor:subd:iii}:
Using \cref{P:commz}\ref{P:commz:i},
Fermat's rule (see, e.g., \cite[Theorem~16.2]{BC2011}) 
and \cref{eq:inv:sub:conj} %\cite[Corollary~16.24]{BC2011}
we have
$Z=(\zer A)\cap(\zer B)=\argmin f\cap \argmin  g
 =(\pt f)^{-1}(0)\cap (\pt g)^{-1}(0)=
\pt f^*(0)\cap \pt g^*(0)$.
\ref{cor:subd:iv}: Combine \ref{cor:subd:i} 
and \cite[Proposition~24.27]{BC2011}
applied to the functions $f^*$ and $g^*$.
\ref{cor:subd:v}: Combine \ref{cor:subd:ii} 
and \cite[Proposition~24.27]{BC2011}
applied to the functions $f$ and $g^\veet
$.
\end{proof}

\section{The Douglas-Rachford algorithm in the affine case}

\label{s:DRA}
In this section we assume\footnote{
$A\colon X\rras X$ is an \emph{affine relation} if 
$\gra A$ is affine subspace of $X\times X$, i.e.,
a translation of a linear subspace of $X\times X$. 
For further information of affine relations we refer the reader to 
\cite{BWY2010}.} that
\begin{empheq}[box=\mybluebox]{equation*}
\label{T:assmp}
A:X\rras X  \text{~and ~} B:X\rras X 
\text{~are maximally monotone and affine,}
\end{empheq} 
and that 
\begin{empheq}[box=\mybluebox]{equation}
\label{eq:Z:fixT}
Z=\stb{x\in X~|~0\in Ax+Bx}\neq \fady.
\end{empheq} 
Since the resolvents $J_A$ and $J_B$ are affine
(see \cite[Theorem~2.1(xix)]{BMW2012}), so is $T_{\DR}$.
\begin{thm}
\label{cor:DR}
Let $x\in X$.
Then the following hold:
\begin{enumerate}
\item
\label{eq:DR:lim}
$T_{\DR}^n x\to P_{\fix T_{\DR}} x$.
\item
\label{eq:lim:JA}
Suppose that
$A$ and $B$ are 
paramonotone such that 
$K\perp (Z-Z)$ (as is the case when\footnote{See 
\cref{P:commz}\ref{P:commz:i:ii}.
}
$A$ and $B$ are paramonotone and 
$(\zer A)\cap(\zer B)\neq \fady$).
Then $J_AT_{\DR}^n x\to P_{Z} x $.
\item
\label{eq:DR:lim:fd}
Suppose that $X$ is finite-dimensional.
Then $T_{\DR}^n x\to P_{\fix T_{\DR}} x$ linearly
 and $J_AT_{\DR}^n x\to J_AP_{\fix T_{\DR}} x$ linearly.
\end{enumerate}
\end{thm}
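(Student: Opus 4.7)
\medskip

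\noindent\emph{Proof proposal.} The plan is to exploit the affine structure of $T_{\DR}$ and then plug into the preparatory machinery of \cref{nexp:f:nexp} and \cref{itaff}. Because $A$ and $B$ are maximally monotone affine relations, the resolvents $J_A$ and $J_B$ are affine (cited from \cite[Theorem~2.1(xix)]{BMW2012}), hence $R_A=2J_A-\Id$ is affine and so is $T_{\DR}=\Id-J_A+J_B R_A$. Write $T_{\DR}=L+b$ with $L\colon X\to X$ linear and $b\in X$. From $T_{\DR}x-T_{\DR}y=L(x-y)$ and \cref{F:DR:FB}\ref{F:DR:FB:i} one reads off that $L$ is nonexpansive. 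Moreover, by the standing assumption \cref{eq:Z:fixT} and \cref{F:DR:FB}\ref{F:DR:FB:ii} we have $\fix T_{\DR}\neq\fady$ since $J_A(\fix T_{\DR})=Z\neq\fady$.

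For \ref{eq:DR:lim}: \cref{F:DR:FB}\ref{F:DR:FB:i} and \cref{F:av:Asr} (applied with $T=T_{\DR}$ and using $\fix T_{\DR}\neq\fady$) yield that $T_{\DR}$ is asymptotically regular. Since $T_{\DR}=L+b$ with $L$ linear and nonexpansive, the equivalence \ref{Thm:asym:Lin:Af:ii}$\siff$\ref{Thm:asym:Lin:Af:iv} in \cref{Thm:asym:Lin:Af} delivers $T_{\DR}^n x\to P_{\fix T_{\DR}}x$ pointwise, as desired.

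For \ref{eq:lim:JA}: combine \ref{eq:DR:lim} with the (Lipschitz) continuity of the firmly nonexpansive resolvent $J_A$ to conclude $J_A T_{\DR}^n x\to J_A P_{\fix T_{\DR}}x$. Under the paramonotonicity of $A,B$ and the orthogonality hypothesis $K\perp(Z-Z)$, \cref{prop:Brett:result} (implication \ref{prop:Brett:result:ii}$\Rightarrow$\ref{prop:Brett:result:iii}) gives $J_A P_{\fix T_{\DR}}=P_Z$, finishing this part; the parenthetical case is exactly \cref{P:commz}\ref{P:commz:i:ii}.

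For \ref{eq:DR:lim:fd}: asymptotic regularity of $T_{\DR}$ (established in the proof of \ref{eq:DR:lim}) together with the implication \ref{Thm:asym:Lin:Af:ii}$\Rightarrow$\ref{Thm:asym:Lin:Af:i} of \cref{Thm:asym:Lin:Af} shows the linear part $L$ is asymptotically regular. In the finite-dimensional setting, \cref{cor:rate:Rd} (applied to $T_{\DR}=L+b$) yields the pointwise linear convergence $T_{\DR}^n x\to P_{\fix T_{\DR}}x$. The linear rate is then inherited by $J_A T_{\DR}^n x\to J_A P_{\fix T_{\DR}}x$ via the nonexpansiveness estimate
\begin{equation*}
\norm{J_A T_{\DR}^n x-J_A P_{\fix T_{\DR}}x}\le \norm{T_{\DR}^n x-P_{\fix T_{\DR}}x}.
\end{equation*}

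No real obstacle arises: the proof is essentially a careful orchestration of the affinity of $T_{\DR}$, \cref{F:av:Asr}, \cref{Thm:asym:Lin:Af}, \cref{prop:Brett:result}, and \cref{cor:rate:Rd}. The only point demanding a line of verification is that $T_{\DR}$ inherits affinity from $A,B$ and that its linear part is nonexpansive; both are immediate once one writes $T_{\DR}x-T_{\DR}y=L(x-y)$.
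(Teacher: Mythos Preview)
Your proposal is correct and follows essentially the same route as the paper: for \ref{eq:DR:lim} you combine \cref{F:DR:FB}\ref{F:DR:FB:i}, \cref{F:av:Asr}, and \cref{Thm:asym:Lin:Af}; for \ref{eq:lim:JA} you invoke \cref{prop:Brett:result}; and for \ref{eq:DR:lim:fd} you appeal to \cref{cor:rate:Rd} plus the nonexpansiveness of $J_A$. The only difference is cosmetic---you spell out $T_{\DR}=L+b$ and the nonexpansiveness of $L$ more explicitly than the paper does.
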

\begin{proof}
\ref{eq:DR:lim}:
Note that in view of \cref{F:DR:FB}\ref{F:DR:FB:ii}
and \cref{eq:Z:fixT} we have $\fix T_{\DR}\neq \fady$.
Moreover \cref{F:DR:FB}\ref{F:DR:FB:i}
and \cref{F:av:Asr} imply that $T_{\DR}$ is 
asymptotically regular. %\cite[Corollar~5.16(ii)]{BC2011}
It follows from \cref{Thm:asym:Lin:Af} that \ref{eq:DR:lim} holds.
\ref{eq:lim:JA}: Use \ref{eq:DR:lim} and 
\cref{prop:Brett:result}.
%\cite[Proposition~2.10, Corollary~5.5(iii)~and Corollary~6.8]
%{JAT2012}.
\ref{eq:DR:lim:fd}:
The linear convergence of $(T_{\DR}^n x)_\nnn$
 follows from \cref{cor:rate:Rd}.
 The linear convergence of $(J_AT_{\DR}^n x)_\nnn$
is a direct consequence of 
the  linear convergence of $(T_{\DR}^n x)_\nnn$
and the fact that $J_A$ is (firmly) nonexpansive. 
\end{proof}
\begin{rem}
\cref{cor:DR} generalizes the convergence results for
the original Douglas-Rachford
algorithm \cite{DR56} 
from particular symmetric matrices/affine operators
on a finite-dimensional space to 
general affine relations defined on possibly infinite
dimensional spaces, 
while keeping strong and linear convergence of 
the iterates of the \emph{governing sequence} 
$(T^n_{\DR}x)_\nnn$ 
and identifying the limit to be $P_{\fix T_{\DR}}x$.
Paramonotonicity coupled with common zeros yields convergence of
the shadow sequence $(J_AT^n_{\DR}x)_\nnn$ to $P_Zx$. 
\end{rem}

Suppose that $U$ and $V$ are 
nonempty closed convex subsets of 
$X$. Then 
\begin{equation}
T_{U,V}=T_{\DR}(N_U,N_V)=\Id-P_U+P_V(2P_U-\Id).
\end{equation}
\begin{proposition}
Suppose that $U$ and $V$ are closed linear subspaces of $X$.
Let $w\in X$.
Then $w+U$ and $w+V$ are closed affine subspaces of $X$,
$(w+U)\cap (w+V)\neq \fady$ and $(\forall n\in \NN)$
\begin{equation}
\label{ex:NC:trans}
T^n_{w+U,w+V}=T^n_{U,V}(\cdot-w)+w.
\end{equation}
\end{proposition}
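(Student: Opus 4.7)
The plan is to verify the three assertions in sequence: that $w+U$ and $w+V$ are closed affine subspaces, that they intersect, and that the iteration formula \eqref{ex:NC:trans} holds. The first is immediate, since translating a closed linear subspace by a vector yields a closed affine subspace. The second is equally trivial: $0\in U\cap V$ forces $w\in(w+U)\cap(w+V)$.

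For the identity \eqref{ex:NC:trans}, I would first establish the base case $n=1$ by a direct computation from the definition $T_{w+U,w+V}x=x-P_{w+U}x+P_{w+V}(2P_{w+U}x-x)$, substituting the translation formula \eqref{eq:proj:trans} with $S\in\{U,V\}$. After rewriting the argument of $P_{w+V}$ as $2P_{w+U}x-x=w+\bigl(2P_U(x-w)-(x-w)\bigr)$ and applying \eqref{eq:proj:trans} once more, the expression collapses to $T_{w+U,w+V}x=T_{U,V}(x-w)+w$.

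To promote this to all $n\in\NN$, I would invoke \cref{lem:shortcut}\ref{lem:shortcut:iii}. Because $U$ and $V$ are closed linear subspaces, $P_U$ and $P_V$ are linear and nonexpansive, and hence so is $L:=T_{U,V}$. The base case shows $T_{w+U,w+V}=L(\cdot)+b$ with $b:=w-Lw$; setting $a:=w$ we have $b=a-La$, so \cref{lem:shortcut}\ref{lem:shortcut:iii} delivers $T^n_{w+U,w+V}x=a+L^n(x-a)=w+T^n_{U,V}(x-w)$ for every $n\in\NN$, which is exactly \eqref{ex:NC:trans}. Alternatively, one can argue by a direct induction using only the base case, but routing through \cref{lem:shortcut} keeps the proof short and conceptual.

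There is no genuine obstacle here; the only item requiring bookkeeping is the base case, specifically the rearrangement $2P_{w+U}x-x=w+(2P_U(x-w)-(x-w))$ that reveals the inner $P_{w+V}$ as a translate of $P_V$ acting on $2P_U(x-w)-(x-w)$. Once this rearrangement is in hand, \cref{lem:shortcut}\ref{lem:shortcut:iii} handles the iteration mechanically.
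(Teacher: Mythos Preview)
Your proof is correct, and the base case $n=1$ is handled exactly as in the paper. The difference lies in the step from $n=1$ to general $n$: the paper carries out the direct induction you mention as an alternative (applying the $n=1$ identity to $T^{n}_{w+U,w+V}x$ to unwind one more iterate), whereas you route through \cref{lem:shortcut}\ref{lem:shortcut:iii} by recognizing $T_{w+U,w+V}=L(\cdot)+b$ with $L=T_{U,V}$ linear nonexpansive and $b=w-Lw$. Your path is cleaner and exploits machinery already in place; the paper's direct induction is more self-contained but slightly repetitive. One small point worth making explicit in your write-up: \cref{lem:shortcut} is stated as ``there exists $a$ with $b=a-La$'', so when you set $a:=w$ you are really observing that the proof of \cref{lem:shortcut}\ref{lem:shortcut:iii} goes through for \emph{any} such $a$ (which it does, since only the telescoping $\sum_{k=0}^{n-1}L^k(a-La)=a-L^na$ is used); alternatively, note that the base case already exhibits $T_{w+U,w+V}x=L(x-w)+w$, which is precisely \eqref{e:0329a} with $a=w$, so the lemma's conclusions apply with this choice.
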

\begin{proof}
Let $x\in X$.
We proceed by induction.
The case $n=0$ is clear.
We now prove the case
when $n=1$, i.e.,
\begin{equation}
\label{eq:n=1:case}
T_{w+U,w+V}=T_{U,V}(\cdot-w)+w. 
\end{equation}
Indeed,
$T_{w+U,w+V}x=(\Id-P_{w+U}+P_{w+V}(2P_{w+U}-\Id))x
=x-w-P_U(x-w)+w+P_V(2P_{w+U}x-x-w)=
x-w-P_U(x-w)+w+P_V(2w+2P_{U}(x-w)-x-w)
=(x-w)-P_U(x-w)+P_V(2P_U(x-w)-(x-w))+w=
(\Id-P_U+P_VR_U)(x-w)+w=T_{V,U}(x-w)+w $.
We now assume that
\cref{ex:NC:trans} holds for some $\nnn$.
Applying \cref{eq:n=1:case}
with $x$ replaced by $T^{n}_{w+V,w+U}x$ yields
\begin{align}
T^{n+1}_{w+U,w+V}x&=
T_{w+U,w+V}(T^{n}_{w+U,w+V}x)=T_{U,V}(T^{n}_{w+U,w+V}x-w)+w
\nonumber\\
&=T_{U,V}(T^{n}_{U,V}(x-w)+w-w)+w=T^{n+1}_{U,V}(x-w)+w;
\end{align}
hence \cref{ex:NC:trans} holds for all $\nnn$.
\end{proof}

\begin{example}[{\bf Douglas-Rachford in the
affine feasibility case }](see also \cite[Corollary~4.5]{JAT2014})
\label{ex:NC:app}
Suppose that $U$ and $V$ are closed linear subspaces of $X$.
Let $w\in X$ and let $x\in X$.
Suppose that $A=N_{w+U}$ and 
that $B=N_{w+V}$. Then 
$T_{w+U,w+V}x=Lx+b$, where $L=T_{U,V}$ and 
$b=w-T_{U,V}w$. Moreover,   
\begin{equation}
\label{eq:lim:NC}
T_{w+V,w+U}^n x\to P_{\fix T_{w+V,w+U}} x
\end{equation}
and 
\begin{equation}
\label{eq:lim:NC:JA}
J_AT_{w+V,w+U}^n x=P_{w+U}T_{w+V,w+U}^n x\to P_{Z} x=P_{(w+V)\cap(w+U)}x .
\end{equation}
Finally, if $U+V$ is closed (as is the 
case when $X$ is finite-dimensional)
then the 
convergence is linear with rate 
$c_F(U,V)<1$, where $c_F(U,V)$
is the cosine of the Friedrich's angle\footnote{
Suppose that $U$ and $V $ are closed linear subspaces of 
$X$.
The cosine of the Friedrichs angle is
$
c_F(\parl U,\parl V)=\sup_{\small\substack{u\in \parl U\cap W^\perp\cap{\BB{0}{1}} 
\\v\in \parl V\cap W^\perp\cap{\BB{0}{1}} }}
{\abs{\innp{u,v}}}<1,
$
where $W=\parl U\cap \parl V$.
%and $\BB{0}{1}$ is the closed unit ball.
}
between $U$ and $V$.
\end{example}
\begin{proof}
Using \cref{ex:NC:trans} with $n=1$ and the linearity of 
$T_{U,V}$ we have 
\begin{equation}
\label{eq:TUV:L:b}
T_{w+U,w+V}=T_{U,V}(\cdot-w)+w=T_{U,V}+w-T_{U,V}w.
\end{equation}
Hence $L=T_{U,V}$ and $b=w-T_{U,V}w$, as claimed.
To obtain  \cref{eq:lim:NC} and \eqref{eq:lim:NC:JA}, use 
\cref{cor:DR}\ref{eq:DR:lim} and 
\cref{cor:DR}\ref{eq:lim:JA}, respectively. 
The claim about the linear rate follows by combining
\cite[Corollary~4.4]{JAT2014} and 
\cref{cor:rate} with $T$ replaced by $T_{w+U,w+V}$,
$L$ replaced by $T_{U,V}$ and $b$ replaced by 
$w-T_{U,V}w$.
\end{proof}
\begin{rem}
When $X$ is infinite-dimensional, it is possible to
construct an example (see \cite[Section~6]{JAT2014}) 
of two linear subspaces
$U$ and $V$ where $c_F(U,V)=1$, and the rate
of convergence of $T_{\DR}$ is not linear. 
\end{rem}
The assumption that both operators be 
paramonotone is critical for the conclusion 
in \cref{cor:DR}\ref{eq:lim:JA}, as shown below.
\begin{example}
Suppose that $X=\RR^2$, that 
\begin{equation}
A=
\begin{pmatrix}
0&-1\\
1&0
\end{pmatrix}
\text{~and that~}B=N_{ \stb{0}\times\RR}.
\end{equation}
Then $T_{\DR}:\RR^2\to \RR^2:(x,y)\mapsto \tfrac{1}{2}(x-y)\cdot(1,-1)$,
 $\fix T_{\DR}=\RR\cdot(1,-1)$, $Z=\stb{0}\times\RR $,
 $K=\stb{0}$, hence $K\perp (Z-Z)$,
 and
$(\forall (x,y)\in \RR^2) (\forall n\ge 1)T_{\DR}^n(x,y)
=T_{\DR}(x,y)=\tfrac{1}{2}(x-y,y-x)
\in \fix T$,
however $(\forall (x,y)\in (\RR\smallsetminus \stb{0})\times \RR)$
$(\forall n\ge 1)$
\begin{equation}
(0,y-x)=J_A T_{\DR}^n(x,y)\neq P_Z(x,y)=(0,y).
\end{equation}
Note that $A$ is not paramonotone by 
\cref{ex:not:para:badskew}.
\end{example}
\begin{proof}
We have
\begin{equation}
J_A
=(\Id+A)^{-1}=
\begin{pmatrix}
1&-1\\
1&1
\end{pmatrix}^{-1}
=
\frac{1}{2}
\begin{pmatrix}
1&1\\
-1&1
\end{pmatrix},
\end{equation} 
 and 
\begin{equation}
R_A=2J_A-\Id=
\begin{pmatrix}
0&1\\
-1&0
\end{pmatrix}.
\end{equation}
Moreover, by \cite[Example~23.4]{BC2011}, 
 \begin{equation}
 J_B=P_{\RR\times\stb{0}}=\begin{pmatrix}
0&0\\
0&1
\end{pmatrix}.
 \end{equation}
 Consequently
 \begin{equation}
 T_{\DR}=\Id-J_A+J_BR_A
 =\begin{pmatrix}
1&0\\
0&1
\end{pmatrix}
-
\frac{1}{2}
\begin{pmatrix}
1&1\\
-1&1
\end{pmatrix}
+
\begin{pmatrix}
0&0\\
0&1
\end{pmatrix}
\begin{pmatrix}
0&1\\
-1&0
\end{pmatrix}
=\frac{1}{2}
\begin{pmatrix}
1&-1\\
-1&1
\end{pmatrix},
 \end{equation}
i.e.,
 \begin{equation}
 \label{eq:T:ex}
 T_{\DR}:\RR^2\to \RR^2:(x,y)\mapsto \tfrac{x-y}{2}(1,-1).
 \end{equation}
Now let $(x,y)\in \RR^2$. Then 
$(x,y)\in \fix T_{\DR}\siff (x,y)=(\tfrac{x-y}{2},-\tfrac{x-y}{2})$
$\siff x=\tfrac{x-y}{2}$ and $y=-\tfrac{x-y}{2}$
$\siff x+y=0$, hence $\fix T_{\DR}=\RR\cdot(1,-1)$ as claimed.
It follows from \cite[Lemma 2.6(iii)]{Comb04}
that $Z=J_A(\fix T_{\DR})=\RR\cdot J_A(1,-1)=\RR\cdot \tfrac{1}{2}(0,2)
=\stb{0} \times\RR $, as claimed.
Now let $(x,y)\in \RR^2$. By \cref{eq:T:ex} we have
$T_{\DR}(x,y)=\tfrac{x-y}{2}(1,-1)\in \fix T_{\DR}$,
hence $(\forall n\ge 1)$ $T_{\DR}^n (x,y)=T(x,y)=\tfrac{x-y}{2}(1,-1)$.
Therefore, 
$(\forall n\ge 1)$ $J_AT_{\DR}^n (x,y)=J_AT_{\DR}(x,y)=
J_A\bk{\tfrac{x-y}{2}(1,-1)}=(0,y-x)\neq (0,y)=
 P_Z(x,y)$ whenever $x \neq 0$.
\end{proof}

The next example illustrates that the assumption 
 $K\perp (Z-Z)$
is critical for the conclusion 
in \cref{cor:DR}\ref{eq:lim:JA}.
\begin{example}[{\bf when $K\not\perp(Z-Z)$}]
\label{ex:no:comm:zeros:Brett}
Let $u\in X\smallsetminus\stb{0}$.
Suppose that $A:X\to X:x\mapsto u$
and $B:X\to X:x\mapsto -u$. Then $A$ and 
$B$ are paramonotone, 
$A+B\equiv 0$ and therefore $Z=X$. 
Moreover, by \cite[Remark~5.4]{JAT2012} 
$(\forall z\in Z=X)$ $K=(Az)\cap(-Bz)
=\stb{u}\not \perp(Z-Z)=X$.
Note that $\fix T=Z+K=X+\stb{u}=X$ and 
$J_A:X\to X:x\mapsto x-u$. Consequently
\begin{equation}
(\forall x\in X)(\forall\nnn)\quad J_AT_{\DR}^n x=J_AP_{\fix T}x=J_Ax=x-u\neq x=P_Zx.
 \end{equation}
\end{example}

\begin{prop}[{\bf parallel splitting}]
Let $m\in \stb{2,3,\ldots}$,
and let $B_i:X\rras X$ be maximally monotone and affine,
$i\in\stb{1,2,\ldots,m}$,
such that $\zer(\sum_{i=1}^{m} B_i)\neq\fady$. 
Set ${\bf\Delta}=\stb{(x,\ldots,x)\in X^m~|~x\in X}$,
set ${\bf A}=N_{{\bf\Delta}}$,
 set ${\bf B}={\ds \times_{i=1}^m}B_i$,
set ${\bf T}=T_{\DR}({\bf A}, {\bf B})$,
 let $j:X\to X^m:x\mapsto(x,x,\ldots,x)$,
  and 
  let $e:X^m\to X:(x_1,x_2,\ldots,x_m)
  \mapsto\tfrac{1}{m}\bk{\sum_{i=1}^m x_i}$.  
 Let ${\bf x}\in X^m$.
 Then ${\bf \Delta}^\perp=\stb{(u_1,\ldots,u_m)
 \in X^m~|~\sum_{i=1}^{m}u_i=0}$,
 \begin{equation}
 \label{eq:paral:sp:ZK}
 {\bf Z}=Z_{({\bf A},{\bf B})}
 =j(\zer\bk{\sum\nolimits_{i=1}^{m} B_i})\subseteq {\bf \Delta}
 \quad
 \text{and}
 \quad
  {\bf K}=K_{({\bf A},{\bf B})}=(-{\bf B}( {\bf Z}) )
  \cap {\bf \Delta}^\perp \subseteq {\bf \Delta}^\perp.
% \left\{j(z)~|~z\in \zer\bk{\sum\nolimits_{i=1}^{m} B_i}\right\}
% \subseteq {\bf \Delta}
  \end{equation}
%  and 
%  \begin{equation}
%   \label{eq:paral:sp:K}
% {\bf K}=K_{({\bf A},{\bf B})}
% =\left\{{\bf B}(j(z))~|~z\in \zer\bk{\sum\nolimits_{i=1}^{m} B_i}\right\}
%  \subseteq {\bf \Delta}^\perp.
% \end{equation}
%Then ${\bf A}:X^m\rras X^x:{\bf x}\to N_{\Delta} {\bf x}$
% and ${\bf B}:X^m\rras X^x:(x)\to N_{\Delta} x$
Moreover, the following hold:
\begin{enumerate}
\item
\label{eq:DR:pars}
${\bf T}^n {\bf x}\to P_{\fix {\bf T}} {\bf x}$.
\item
\label{eq:DR:pars:fd}
Suppose that $X$ is finite-dimensional.
Then ${\bf T}^n {\bf x}\to P_{\fix {\bf T}} {\bf x}$ linearly and
$J_{\bf A}{\bf T}^n {\bf x}=P_{\bf \Delta}{\bf T}^n {\bf x}
%={\bf T}^n {\bf x}
\to P_{\bf \Delta}P_{\fix {\bf T}} {\bf x}$ linearly.
\item
\label{eq:lim:JA:pars}
Suppose that
$B_i:X\rras X$,
$i\in\stb{1,2,\ldots,m}$,  are
paramonotone. 
Then ${\bf B}$ is paramonotone
and $J_{\bf A}{\bf T}^n {\bf x}=
P_{\bf \Delta}{\bf T}^n {\bf x}\to P_{{\bf Z}} {\bf x} $.
Consequently, 
$e(J_{\bf A}{\bf T}^n {\bf x})=
e(P_{\bf \Delta}{\bf T}^n {\bf x})\to e(P_{{\bf Z}} {\bf x})\in Z $.
\end{enumerate}
\end{prop}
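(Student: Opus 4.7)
The plan is to reduce the statement to \cref{cor:DR} applied to the operators ${\bf A}$ and ${\bf B}$ on the product Hilbert space $X^m$, so that convergence, the linear rate, and the paramonotone shadow-sequence conclusion all descend from the ambient theorem.

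First I would verify the structural prerequisites. Both ${\bf A}=N_{\bf\Delta}$ and ${\bf B}=\times_{i=1}^m B_i$ are maximally monotone on $X^m$: the former as the normal cone of a closed linear subspace, the latter as a product of maximally monotone operators. Both are affine relations, since $\gra {\bf A}={\bf\Delta}\times{\bf\Delta}^\perp$ is a linear subspace of $X^m\times X^m$ and $\gra {\bf B}=\prod_i \gra B_i$ is a product of affine subspaces. The description of ${\bf\Delta}^\perp$ follows from $\innp{(x,\ldots,x),(u_1,\ldots,u_m)} = \innp{x,\sum_{i=1}^m u_i}$, which vanishes for every $x\in X$ precisely when $\sum_i u_i=0$.

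Next I would identify ${\bf Z}$ and ${\bf K}$. For ${\bf Z}$: the inclusion $0\in{\bf A}({\bf x})+{\bf B}({\bf x})$ forces ${\bf x}\in\dom {\bf A}={\bf\Delta}$, so ${\bf x}=j(x)$ for some $x\in X$, and then demands $u_i\in B_i(x)$ with $\sum_i u_i=0$, equivalently $x\in\zer(\sum_i B_i)$. This yields ${\bf Z}=j(\zer(\sum_i B_i))\subseteq{\bf\Delta}$, and nonemptiness is immediate from the hypothesis. For ${\bf K}$: unfolding ${\bf K}=({\bf A}^{-1}+{\bf B}^{-\ovee})^{-1}(0)$ shows ${\bf k}\in{\bf K}$ iff there is some ${\bf z}$ with ${\bf k}\in{\bf A}({\bf z})\cap(-{\bf B}({\bf z}))$. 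Nonemptiness of ${\bf A}({\bf z})$ forces ${\bf z}\in{\bf\Delta}$, whence ${\bf A}({\bf z})={\bf\Delta}^\perp$, giving ${\bf k}\in{\bf\Delta}^\perp$ and $-{\bf k}\in{\bf B}({\bf z})$; conversely, any such configuration witnesses ${\bf z}\in{\bf Z}$, because it exhibits a point in ${\bf A}({\bf z})\cap(-{\bf B}({\bf z}))$. Thus ${\bf K}=(-{\bf B}({\bf Z}))\cap{\bf\Delta}^\perp$, with no appeal to paramonotonicity.

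For part \ref{eq:DR:pars}, \cref{cor:DR}\ref{eq:DR:lim} applies directly since ${\bf Z}\neq\fady$. Part \ref{eq:DR:pars:fd} follows from \cref{cor:DR}\ref{eq:DR:lim:fd} together with $J_{\bf A}=P_{\bf\Delta}$ and the nonexpansivity of $P_{\bf\Delta}$, which transfers linear convergence to the shadow sequence. For part \ref{eq:lim:JA:pars}, I would first verify paramonotonicity of ${\bf B}$ by a direct product argument: whenever $\innp{{\bf x}-{\bf y},{\bf u}-{\bf v}}=\sum_i\innp{x_i-y_i,u_i-v_i}=0$ with each summand nonnegative by monotonicity of $B_i$, every summand vanishes, and paramonotonicity of each $B_i$ supplies the coordinatewise swap yielding ${\bf u}\in{\bf B}({\bf y})$ and ${\bf v}\in{\bf B}({\bf x})$. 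The orthogonality hypothesis ${\bf K}\perp({\bf Z}-{\bf Z})$ of \cref{cor:DR}\ref{eq:lim:JA} then holds automatically, since ${\bf Z}-{\bf Z}\subseteq{\bf\Delta}-{\bf\Delta}={\bf\Delta}$ (as ${\bf\Delta}$ is a linear subspace) and ${\bf K}\subseteq{\bf\Delta}^\perp$. Applying the theorem gives $J_{\bf A}{\bf T}^n{\bf x}=P_{\bf\Delta}{\bf T}^n{\bf x}\to P_{\bf Z}{\bf x}$; continuity of the linear map $e$ transfers this to $e(J_{\bf A}{\bf T}^n{\bf x})\to e(P_{\bf Z}{\bf x})$, and the identification ${\bf Z}=j(\zer(\sum_i B_i))$ forces $e(P_{\bf Z}{\bf x})\in\zer(\sum_i B_i)=Z$. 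No substantive difficulty arises; the entire proof is bookkeeping around the product-space translation, and the only mildly subtle step is the paramonotonicity-free derivation of the formula for ${\bf K}$, which hinges on ${\bf A}({\bf z})={\bf\Delta}^\perp$ being constant across ${\bf\Delta}$.
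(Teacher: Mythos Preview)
Your proof is correct and follows essentially the same route as the paper: reduce to \cref{cor:DR} on $X^m$, identify ${\bf Z}$ and ${\bf K}$, and for \ref{eq:lim:JA:pars} verify paramonotonicity of ${\bf B}$ componentwise before invoking the shadow-sequence conclusion (the paper routes this last step through \cref{cor:aff:subsp:ZK}\ref{cor:aff:subsp:ZK:iii} rather than \cref{cor:DR}\ref{eq:lim:JA}, but both rest on \cref{prop:Brett:result}). Your direct derivation of the formula for ${\bf K}$ is in fact slightly cleaner than the paper's, which cites \cref{cor:aff:subsp:ZK}---a corollary stated under a paramonotonicity hypothesis on $B$---whereas your argument, exploiting that ${\bf A}({\bf z})={\bf\Delta}^\perp$ is constant on ${\bf\Delta}$, shows the identity holds unconditionally as the proposition asserts.
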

\begin{proof}
The claim about ${\bf \Delta}^\perp$
and
first identity in \cref{eq:paral:sp:ZK} 
follows from  
\cite[Proposition~25.5(i)\&(vi)]{BC2011},
whereas the second identity in \cref{eq:paral:sp:ZK} 
follows from \cref{cor:aff:subsp:ZK}\ref{cor:aff:subsp:ZK:ii}
applied
to $({\bf A},{\bf B})$.
\ref{eq:DR:pars}:
Apply \cref{cor:DR}\ref{eq:DR:lim} to $({\bf A},{\bf B})$.
\ref{eq:DR:pars:fd}:
Apply \cref{cor:DR}\ref{eq:DR:lim:fd} to $({\bf A},{\bf B})$.
\ref{eq:lim:JA:pars}: Let $({\bf x},{\bf u}), ({\bf y},{\bf v})$
be in $\gra {\bf B}$. On the one hand $\innp{{\bf x}-{\bf y}, {\bf u}-{\bf v}}=0$
$\siff \sum_{i=1}^{m}\innp{x_i-y_i,u_i-v_i}=0$, 
$(x_i,u_i), (y_i, v_i)$ are in $\gra B_i$, $i\in \stb{1,\ldots, m }$.
On the other hand, since $(\forall i\in \stb{1,\ldots, m }) ~B_i$ are monotone
we learn that $(\forall i\in \stb{1,\ldots, m }) ~\innp{x_i-y_i,u_i-v_i}\ge 0$.
Altogether, $(\forall i\in \stb{1,\ldots, m }) 
~\innp{x_i-y_i,u_i-v_i}= 0$.
Now use that paramonotonicity of $B_i$ to
deduce that $(x_i,v_i), (y_i, u_i)$ are 
in $\gra B_i$, $i\in \stb{1,\ldots, m }$; equivalently,
$({\bf x},{\bf v}), ({\bf y},{\bf u})$
in $\gra {\bf B}$. 
Finally, apply \cref{cor:aff:subsp:ZK}\ref{cor:aff:subsp:ZK:iii}.
\end{proof}

\section{Examples of linear monotone operators}

\label{s:ex}

In this section we present examples of monotone 
operators that are partly motivated by applications in partial differential
equations; see, e.g., \cite{Glow15} and \cite{Thomee90}. 
Let $M\in \RR^{n\times n}$.
Then we have the following equivalences:
\begin{subequations}
\label{e:0329b}
\begin{align}
M \text{~is monotone~} &\siff
\frac{M+M\tran}{2}~\text{is positive semidefinite}
\label{eq:ch:mono:symp}\\
&\siff
\text{the eigenvalues of\, $\displaystyle \frac{M+M\tran}{2}$\, lie in $\RP$.}
\label{eq:mono:ev}
\end{align}
\end{subequations}

\begin{lem}%Brett's Inserted Lemma
\label{lem:mon:2by2:Brett}
Let \begin{equation}
M=
\begin{pmatrix}
\alpha &\beta\\
\gamma &\delta
\end{pmatrix}
\in \RR^{2\times 2}.
\end{equation}
Then
$M$ is monotone if and only if $\alpha\geq 0$, $\delta\geq 0$ 
and $4\alpha\delta\geq (\beta+\gamma)^2$.
\end{lem}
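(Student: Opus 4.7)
The plan is to invoke the characterization of monotonicity for real matrices given in \eqref{eq:ch:mono:symp}, namely that $M$ is monotone if and only if the symmetric part $S := (M+M\tran)/2$ is positive semidefinite, and then verify directly that positive semidefiniteness of the $2\times 2$ symmetric matrix $S$ is equivalent to the three scalar inequalities in the statement.

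First I would compute the symmetric part explicitly:
\begin{equation*}
S = \frac{M+M\tran}{2} = \begin{pmatrix} \alpha & (\beta+\gamma)/2 \\ (\beta+\gamma)/2 & \delta \end{pmatrix}.
\end{equation*}
Next I would apply the standard fact that a $2\times 2$ real symmetric matrix is positive semidefinite if and only if both diagonal entries are nonnegative and the determinant is nonnegative. (This is the $2\times 2$ case of Sylvester-type criteria for PSD matrices, and can either be cited from a linear algebra reference or proved in one line by noting that a symmetric matrix is PSD iff both of its eigenvalues are in $\RP$, which by Vi\`ete's formulas is equivalent to the trace and determinant being nonnegative; in the $2\times 2$ case trace $\geq 0$ and determinant $\geq 0$ further force each diagonal entry to be nonnegative via the principal $1\times 1$ minors.)

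Applying this to $S$ gives the three conditions $\alpha\geq 0$, $\delta\geq 0$, and
\begin{equation*}
\det S = \alpha\delta - \Bigl(\frac{\beta+\gamma}{2}\Bigr)^2 \geq 0,
\end{equation*}
which rearranges to $4\alpha\delta \geq (\beta+\gamma)^2$. Combining with \eqref{eq:ch:mono:symp} yields the claim.

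There is no real obstacle here; the proof is essentially a one-line reduction to the PSD criterion for $2\times 2$ symmetric matrices. The only mild subtlety is that the off-diagonal entries $\beta$ and $\gamma$ enter the condition only through their sum $\beta+\gamma$, which is visible immediately from the symmetrization.
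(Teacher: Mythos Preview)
Your proof is correct and follows essentially the same approach as the paper: both reduce to positive semidefiniteness of the symmetric part via \eqref{eq:ch:mono:symp} and then check the principal minors (equivalently, diagonal entries and determinant) of the resulting $2\times 2$ symmetric matrix. The only cosmetic difference is that the paper works with $M+M\tran$ rather than $(M+M\tran)/2$, obtaining minors $2\alpha$, $2\delta$, and $4\alpha\delta-(\beta+\gamma)^2$ directly.
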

\begin{proof}
Indeed, the principal minors of 
$M+M\tran$ 
are 
$2\alpha$, $2\delta$ and $4\alpha\delta-(\beta+\gamma)^2$;
by, e.g., 
\cite[(7.6.12)~on~page~566]{Meyer2000}. 
\end{proof}

Note that if $M=M\tran$, then
$M$ is monotone if and only if the eigenvalues of $M$ lie in
$\RP$. If $M\neq M\tran$, then some information about the
location of the (possibly complex) eigenvalues of $M$ is
available:

\begin{lem}
\label{lem:re:part:+ve}
Let $M\in \RR^{n\times n}$ be monotone, 
and let $\stb{\lam_k}_{k=1}^n$ denote the set of 
eigenvalues of $M$. 
Then\footnote{Let $\mathbb{C}$ be
the set of complex numbers and let $z\in\mathbb{C}$.
We use $\operatorname{Re}(z)$ to refer to the real part
of the complex number $z$.} 
$\operatorname{Re}(\lam_k)\ge 0$ for
every $k\in \stb{1,\ldots, n}$.
\end{lem}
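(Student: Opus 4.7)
The plan is to exploit the characterization from \eqref{eq:ch:mono:symp} that monotonicity of $M$ is equivalent to positive semidefiniteness of the symmetric part $S := (M+M\tran)/2$, and then to pass from a (possibly complex) eigenvalue–eigenvector pair of $M$ back to a real quadratic form involving $S$.

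First I would fix $k \in \{1,\dots,n\}$ and pick a (possibly complex) eigenvector $v \in \mathbb{C}^n \setminus \{0\}$ with $Mv = \lambda_k v$. Writing $v = a + ib$ with $a,b \in \RR^n$, I would form the Hermitian quadratic form $v^* M v$. Since $v^*v = \|a\|^2 + \|b\|^2 > 0$, the eigenvalue relation yields
\begin{equation*}
\lambda_k \, v^* v \;=\; v^* M v.
\end{equation*}

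Next I would take real parts. A direct expansion gives
\begin{equation*}
v^* M v \;=\; (a\tran M a + b\tran M b) \;+\; i\,(a\tran M b - b\tran M a),
\end{equation*}
so that
\begin{equation*}
\operatorname{Re}(v^* M v) \;=\; a\tran M a + b\tran M b \;=\; a\tran S a + b\tran S b,
\end{equation*}
where in the last equality I use that $x\tran M x = x\tran S x$ for every real vector $x$. By \eqref{eq:ch:mono:symp}, $S$ is positive semidefinite, so both summands on the right-hand side are nonnegative.

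Combining the two displays, $\operatorname{Re}(\lambda_k)\,\|v\|^2 = \operatorname{Re}(v^* M v) \ge 0$, and dividing by $\|v\|^2 > 0$ gives $\operatorname{Re}(\lambda_k) \ge 0$. No step here is really an obstacle; the only subtlety is being careful that $M$ is real while $v$ may be complex, which is precisely what justifies replacing $M$ by its symmetric part $S$ in the quadratic form on real vectors $a$ and $b$.
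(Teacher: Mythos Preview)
Your proof is correct and follows essentially the same approach as the paper's: both split a complex eigenvector into real and imaginary parts and reduce to showing that $a\tran M a + b\tran M b \ge 0$, which follows from monotonicity. The only cosmetic difference is that you package the computation via the Hermitian form $v^* M v$ and take real parts at the end, whereas the paper first separates the eigenvalue equation $(M-\lambda\Id)w=0$ into its real and imaginary components, takes real inner products, and adds; the resulting identity $\operatorname{Re}(\lambda)(\|a\|^2+\|b\|^2)=a\tran M a + b\tran M b$ is the same in both.
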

\begin{proof}
Write $\lam =\alpha+\mathrm{i}\beta$, where $\alpha$ and $\beta$
belong to $\RR$ and $\mathrm{i}=\sqrt{-1}$ and assume that 
$\lambda$ is an eigenvalue of
$M$ with (nonzero) eigenvector $w=u+\mathrm{i}v$, where
$u$ and $v$ are in $\RR^n$.
Then 
$(M-\lam \Id)w=0$ $\siff$
$((M-\alpha\Id)-\mathrm{i}\beta\Id)(u+\mathrm{i}v)=0$
$\siff$ $(M-\alpha\Id) u+\beta v=0$
and $(M-\alpha\Id)v-\beta u=0$.
Hence 
\begin{subequations}
\label{eq:re:part:+ve}
\begin{align}
\innp{u,(M-\alpha\Id) u}+\beta\innp{u,v}&=0,\\
\innp{v,(M-\alpha\Id) v}-\beta\innp{v,u}&=0.
\end{align}
\end{subequations}
Adding \cref{eq:re:part:+ve} yields
$\innp{u,(M-\alpha\Id) u}+\innp{v,(M-\alpha\Id) v}=0$;
equivalently, 
$\innp{u,Mu}+\innp{v,Mv}-\alpha\normsq{u}-\alpha\normsq{v}=0$.
Solving for $\alpha$ yields
\begin{equation}
\operatorname{Re}(\lam)=
\alpha=\frac{\innp{u,Mu}+\innp{v,Mv}}{\normsq{u}+\normsq{v}}\ge
0,
\end{equation}
as claimed. 
\end{proof}

  The converse of \cref{lem:re:part:+ve} is not true in
general, as we demonstrate in the following example.

\begin{example}%Brett's Modified Example
Let $\xi\in\RR\smallsetminus [-2,2]$, and set 
\begin{equation}
M=
\begin{pmatrix}
1&\xi\\
0&1
\end{pmatrix}.
\end{equation}
Then $M$ has $1$ as its only eigenvalue (with multiplicity $2$),
$M$ is not monotone
by \cref{lem:mon:2by2:Brett}, 
and $M$ is not symmetric. 
\end{example}

\begin{prop}
\label{prop:tri:mono:ch}

Consider the tridiagonal Toeplitz matrix 
\begin{equation} 
\label{eq:trid:def} 
M =% \left[ 
\begin{pmatrix}
\beta & \gamma & & 0\\
\alpha & \ddots & \ddots & \\
& \ddots & \ddots & \gamma \\
0 & & \alpha & \beta 
\end{pmatrix} \in \RR^{n\times n}.% \right]
\end{equation}
Then $M$ is monotone if and only if 
$\beta\ge \abs{\alpha+\gamma}\cos(\pi/(n+1))$. 
\end{prop}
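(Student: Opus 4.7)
The plan is to reduce the question to a symmetric eigenvalue problem via the characterization in \cref{e:0329b}, then invoke the well-known closed form for the spectrum of a symmetric tridiagonal Toeplitz matrix.

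First, I would observe that $M+M\tran$ is itself a symmetric tridiagonal Toeplitz matrix, with constant diagonal $2\beta$ and constant sub/superdiagonal $\alpha+\gamma$; equivalently, $(M+M\tran)/2$ is symmetric tridiagonal Toeplitz with diagonal $\beta$ and off-diagonal $(\alpha+\gamma)/2$. By \cref{eq:mono:ev}, $M$ is monotone if and only if every eigenvalue of this symmetric matrix is nonnegative.

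Second, I would quote the classical formula for the eigenvalues of an $n\times n$ symmetric tridiagonal Toeplitz matrix with diagonal $a$ and off-diagonal $b$, namely
\begin{equation}
\lambda_k = a + 2b\cos\!\left(\frac{k\pi}{n+1}\right), \qquad k\in\{1,\ldots,n\}
\end{equation}
(see, e.g., Meyer \cite{Meyer2000}). Applied here this yields that the eigenvalues of $(M+M\tran)/2$ are
\begin{equation}
\mu_k = \beta + (\alpha+\gamma)\cos\!\left(\frac{k\pi}{n+1}\right), \qquad k\in\{1,\ldots,n\}.
\end{equation}

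Third, I would compute $\min_{k} \mu_k$. Because $k\pi/(n+1)$ ranges over $\{\pi/(n+1),\ldots,n\pi/(n+1)\}\subseteq (0,\pi)$, the values $\cos(k\pi/(n+1))$ fill a symmetric set whose extremes are $\pm\cos(\pi/(n+1))$. Hence
\begin{equation}
\min_{1\le k\le n}\mu_k = \beta - |\alpha+\gamma|\cos\!\left(\tfrac{\pi}{n+1}\right),
\end{equation}
the sign of $\alpha+\gamma$ dictating whether the minimum is attained at $k=n$ or $k=1$. Requiring this quantity to be nonnegative gives exactly $\beta\ge|\alpha+\gamma|\cos(\pi/(n+1))$, and the equivalence is proved.

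The only nonroutine ingredient is the eigenvalue formula, which the paper is clearly comfortable citing; the rest is an elementary extremization of $\cos$ on a discrete grid in $(0,\pi)$. Note as a side remark that the condition forces $\beta\ge 0$, consistent with the $n=2$ case of \cref{lem:mon:2by2:Brett}.
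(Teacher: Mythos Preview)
Your proposal is correct and follows essentially the same route as the paper: symmetrize to $(M+M\tran)/2$, invoke the tridiagonal Toeplitz eigenvalue formula (the paper cites \cite[Example~7.2.5]{Meyer2000}), and read off the nonnegativity condition. The only cosmetic difference is that the paper treats the case $\alpha+\gamma=0$ separately (where the symmetrized matrix is $\beta\Id$), whereas you absorb it into the general formula; your uniform treatment is fine since the eigenvalue formula degenerates correctly when the off-diagonal vanishes.
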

\begin{proof}
Note that 
 \begin{equation}
%  \label{eq:trid:def}
\tfrac{1}{2}(M+M{\tran})=% \left[ 
\begin{pmatrix}
\beta & \tfrac{1}{2}(\alpha+\gamma) & & 0\\
\tfrac{1}{2}(\alpha+\gamma) & \ddots & \ddots & \\
& \ddots & \ddots &\tfrac{1}{2}(\alpha+\gamma)\\
0 & & \tfrac{1}{2}(\alpha+\gamma) & \beta 
\end{pmatrix}.% \right]
\end{equation}
By \cref{eq:ch:mono:symp},
$M$ is monotone 
$\siff $ $\tfrac{1}{2}(M+M{\tran})$ is positive semidefinite.
If $\alpha+\gamma= 0$ then $\tfrac{1}{2}(M+M{\tran})=\beta \Id$
 and therefore $\tfrac{1}{2}(M+M{\tran})$ 
 is positive semidefinite $\siff$
 $\beta\ge 0=\abs{\alpha+\gamma}$. 
 Now suppose that $\alpha+\gamma\neq 0$.
It follows from \cite[Example~7.2.5]{Meyer2000}
that the eigenvalues of $\tfrac{1}{2}(M+M{\tran})$ are 
\begin{equation}
\lambda_k=\beta+(\alpha+\gamma)
\cos\big(\tfrac{k\pi}{n+1}\big), 
\end{equation}
where $k\in \stb{1,\ldots,n}$.
Consequently, $\{\lam_k\}_{k=1}^{n}\subseteq \RP$ $\siff$ 
$\beta\ge \abs{(\alpha+\gamma)\cos(\pi/(n+1))}$.
Therefore, the characterization of monotonicity of $M$ follows
from \cref{eq:mono:ev}. 
\end{proof}

\begin{prop}
\label{prop:trid:inv}
Let 
\begin{equation} 
\label{eq:trid:def:rep} 
M =% \left[ 
\begin{pmatrix}
\beta & \gamma & & 0\\
\alpha & \ddots & \ddots & \\
& \ddots & \ddots & \gamma \\
0 & & \alpha & \beta 
\end{pmatrix}\in\RR^{n\times n}.% \right]
\end{equation}
Then exactly one of the following holds:
\begin{enumerate}
\item
\label{prop:trid:inv:i}
$\alpha\gamma=0 $ and $\operatorname{det} (M)=\beta^n$.
Consequently $M$ is invertible
$\siff \beta\neq 0$, in which case 
\begin{equation}
\label{eq:triang}
[M^{-1}]_{i,j}=(-\alpha)^{\max\stb{i-j,0}}
(-\gamma)^{\max\stb{j-i,0}} \beta^{\min\stb{j-i,i-j}-1}.
\end{equation}
\item
\label{prop:trid:inv:ii}
$\alpha\gamma\neq 0$.
Set $r=\tfrac{1}{2\alpha}(-\beta+\sqrt{\beta^2-4\alpha\gamma})$,
 $s=\tfrac{1}{2\alpha}(-\beta-\sqrt{\beta^2-4\alpha\gamma})$
  and $ {\Lambda}
  =\menge{\beta+2\gamma\sqrt{\alpha/\gamma} 
  \cos(k\pi/(n+1))}{k\in \stb{1,2,\ldots,n}}$.
  Then $rs\neq 0$. 
Moreover, 
$M$ is invertible\footnote{In the special case, when 
$\beta =0$, this is equivalent to saying that $M$
is invertible $\siff$ $n$ is even.}
$\siff 0\not\in   {\Lambda}$, in which case
%\end{enumerate}
\begin{subequations}
\label{eq:triang:rs}
\begin{align}
r\neq s&\RA
[M^{-1}]_{i,j}=-\frac{\gamma^{j-1}(r^{\min\stb{i,j}}-s^{\min\stb{i,j}})(r^{n+1}
s^{\max\stb{i,j}}-r^{\max\stb{i,j}}s^{n+1})}{\alpha^j(r-s)(r^{n+1}-s^{n+1})},
\\
r= s&\RA [M^{-1}]_{i,j}=
-
\frac{\gamma^{j-1}\min\stb{i,j}(n+1-\max\stb{i,j})r^{i+j-1}}{\alpha^j(n+1)}.
\end{align}
\end{subequations}
Alternatively, define the recurrence relations
\begin{subequations}
\label{subeq:recurr}
\begin{align}
u_0=0,&&u_1=1,&&u_k=-\tfrac{1}{\gamma}(\alpha u_{k-2}+\beta u_{k-1}),&& k\ge 2;\\
v_{n+1}=0,&&v_n=1,&&v_k=-\tfrac{1}{\alpha}(\beta v_{k+1}+\gamma v_{k+2}),&& k\le n-1.
\end{align}
\end{subequations}
Then 
\begin{equation}
\label{eq:triang:recur}
 [M^{-1}]_{i,j}=-\frac{u_{\min\stb{i,j}}v_{\max\stb{i,j}}}{v_0}\bk{\frac{\gamma}{\alpha}}^{j-1}. 
\end{equation}
\end{enumerate}
\end{prop}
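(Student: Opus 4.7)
The plan is to split the argument cleanly along the dichotomy $\alpha\gamma=0$ versus $\alpha\gamma\neq 0$, since these cases have fundamentally different matrix structure and require different tools.

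For \ref{prop:trid:inv:i}, if $\alpha=0$ then $M$ is upper triangular, if $\gamma=0$ it is lower triangular, and in either subcase (including $\alpha=\gamma=0$) expansion along the first column immediately gives $\det M=\beta^n$, so invertibility is equivalent to $\beta\neq 0$. With $\alpha\gamma=0$, the proposed formula \eqref{eq:triang} collapses: when $\alpha=0$ the factor $(-\alpha)^{\max\{i-j,0\}}$ vanishes for $i>j$ (so $M^{-1}$ is upper triangular), while $\max\{i-j,0\}=0$ when $i\le j$; a symmetric collapse occurs for $\gamma=0$. In each collapsed form the identity $M M^{-1}=\Id$ reduces to standard geometric-series identities for the inverse of a triangular Toeplitz matrix, which I would verify by a short direct multiplication.

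For \ref{prop:trid:inv:ii}, first handle invertibility. Since $\alpha\gamma\neq 0$ I can form $D=\operatorname{diag}(1,\rho,\rho^2,\ldots,\rho^{n-1})$ with $\rho=\sqrt{\gamma/\alpha}$ (interpreted in $\mathbb{C}$ if $\alpha\gamma<0$). Then $DMD^{-1}$ is the symmetric tridiagonal Toeplitz matrix with $\beta$ on the diagonal and $\sqrt{\alpha\gamma}$ on both off-diagonals, whose eigenvalues are the classical values $\beta+2\sqrt{\alpha\gamma}\cos\bigl(k\pi/(n+1)\bigr)$ for $k=1,\ldots,n$ (as in \cite[Example~7.2.5]{Meyer2000}, already invoked in \cref{prop:tri:mono:ch}). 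Since similar matrices share spectra, this set equals $\Lambda$, so $M$ is invertible iff $0\notin\Lambda$. The footnote case $\beta=0$ follows because $\cos\bigl(k\pi/(n+1)\bigr)=0$ for some $k\in\{1,\dots,n\}$ iff $n$ is even.

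For the closed-form inverse, I would use the standard structural fact that each column of $M^{-1}$ satisfies, away from the pivot index $j$, the homogeneous recurrence induced by $M$, namely $\alpha x_{k-1}+\beta x_k+\gamma x_{k+1}=0$, which is exactly the recurrence driving $(u_k)$ and $(v_k)$ in \eqref{subeq:recurr}; matching the two boundary conditions and the jump condition at $i=j$ yields \eqref{eq:triang:recur} after a short determinant calculation (the denominator $v_0$ is the discrete Wronskian). Solving the recurrence explicitly via the characteristic equation $\gamma t^2+\beta t+\alpha=0$ with reciprocal roots $1/r,1/s$ of the companion polynomial $\alpha t^2+\beta t+\gamma=0$, and applying the boundary conditions $u_0=0,u_1=1$ and $v_{n+1}=0,v_n=1$, produces closed forms in $r,s$ which, after substitution into \eqref{eq:triang:recur}, yield the first line of \eqref{eq:triang:rs}. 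The main obstacle, and the step I expect to consume the most care, is the repeated-root case $r=s$: here the two-dimensional solution space of the recurrence is spanned by $r^k$ and $k r^k$ rather than $r^k,s^k$, so the closed form has to be re-derived (or obtained as the limit $s\to r$ via L'Hôpital), giving the $(n+1-\max\{i,j\})\min\{i,j\}\, r^{i+j-1}$ numerator in the second line of \eqref{eq:triang:rs}. Once the two cases are written out, a final check that the formulas in \eqref{eq:triang:rs} and \eqref{eq:triang:recur} agree (they must, since both compute $M^{-1}$) completes the argument.
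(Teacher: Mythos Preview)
Your outline is sound and covers all parts of the statement. For part \ref{prop:trid:inv:i} and for the invertibility criterion in \ref{prop:trid:inv:ii} you proceed essentially as the paper does (triangularity gives $\det M=\beta^n$; the eigenvalues of $M$ are the set $\Lambda$ via \cite[Example~7.2.5]{Meyer2000}, with your diagonal similarity $D=\operatorname{diag}(1,\rho,\ldots,\rho^{n-1})$ just making explicit the reduction that Meyer's formula already subsumes). The paper also checks $rs\neq 0$ directly from $r,s\in\{0\}\Leftrightarrow \alpha\gamma=0$, which you could add in one line (or simply note $rs=\gamma/\alpha$ by Vieta).

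Where you genuinely diverge is in obtaining the inverse formulas \eqref{eq:triang:rs} and \eqref{eq:triang:recur}. The paper does not derive these: it cites \cite[Theorem~2 and Remark~2]{Yama97} and is done. Your plan instead reconstructs the argument from scratch---observing that columns of $M^{-1}$ satisfy the three-term recurrence off the pivot, identifying $(u_k)$ and $(v_k)$ as the solutions matched to the two boundary conditions, assembling \eqref{eq:triang:recur} from the jump/Casoratian computation, and then solving the characteristic equation (with the repeated-root case handled separately) to obtain \eqref{eq:triang:rs}. This is the standard Green's-function derivation for tridiagonal inverses and is exactly what underlies \cite{Yama97}; it buys self-containment at the cost of a page or two of linear-recurrence bookkeeping, whereas the paper trades that for an external reference.

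One small slip: in your footnote check you wrote that $\cos\bigl(k\pi/(n+1)\bigr)=0$ for some $k\in\{1,\dots,n\}$ iff $n$ is even. In fact this happens iff $(n+1)/2$ is an integer, i.e., iff $n$ is \emph{odd}; hence $0\in\Lambda$ iff $n$ is odd, and $M$ is invertible iff $n$ is even, which is what the footnote asserts.
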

\begin{proof}
\ref{prop:trid:inv:i}:
$\alpha\gamma=0\siff \alpha =0$ or $\gamma=0$,
in which case $M$ is a (lower or upper) triangular 
 matrix. Hence $\operatorname{det} (M)=\beta^n$, 
 and the characterization follows.
The formula in \cref{eq:triang}
is easily verified.
\ref{prop:trid:inv:ii}:
Note that $0\in \stb{r,s}\siff$
$\beta\in\{\pm\sqrt{\beta^2-4\alpha\gamma}\}$ 
$\siff \beta^2=\beta^2-4\alpha\gamma\siff \alpha\gamma=0$.
Hence $rs\neq 0$. 
Moreover, it follows 
from \cite[Example~7.2.5]{Meyer2000} that 
${\Lambda}$ is the set of eigenvalues of $M$; therefore, 
$M$ is invertible 
$\siff 0\not\in   {\Lambda}$.
The formulae \cref{eq:triang:rs}
follow from
\cite[Remark~2~on~page~110]{Yama97}.
The recurrence formulae defined in \cref{subeq:recurr}
and \cite[Theorem~2]{Yama97} yield
\cref{eq:triang:recur}.
\end{proof}

\begin{rem}
Concerning \cref{prop:trid:inv}, 
it follows from \cite[Section~2~on~page~44]{Torii66}
that we also have the alternative formulae
\begin{subequations}
\begin{align}
\label{eq:triang:rs:Torii}
r\neq s&\RA [M^{-1}]_{i,j}=\left\{
                \begin{array}{ll}
                  \displaystyle -\frac{1}{\gamma}\frac{s^{-i}-r^{-i}}{s^{-1}-r^{-1}}
                  \frac{s^{-n+j-1}-r^{-n+j-1}}{s^{-(n+1)}-r^{-(n+1)}},&j\ge i;\\
                  \\
             \displaystyle -\frac{1}{\alpha}\frac{s^{j}-r^{j}}{s-r} 
              \frac{s^{n-i+1}-r^{n-i+1}}{s^{n+1}-r^{n+1}},&j\le i,\\
                \end{array}
              \right . \\[+5mm]
\label{eq:triang:rr:Torri}
r= s&\RA [M^{-1}]_{i,j}=
\left\{
                \begin{array}{ll}
                  \displaystyle -\frac{i}{\gamma}\bk{1-\frac{j}{n+1}}r^{j-i+1}
                  ,&j\ge i;\\
                  \\
              \displaystyle -\frac{j}{\alpha} \bk{1-\frac{i}{n+1}}r^{j-i-1},&j\le i.\\
                \end{array}
              \right . 
              \end{align}
\end{subequations}
Using the binomial expansion, \cref{eq:triang:rs:Torii},
and a somewhat tedious calculation
which we omit here, one can show that $[M^{-1}]_{i,j}$ is equal
to 
\begin{equation}
-\tfrac{2\bk{\sum\limits_{m=0}^{\ceil{\sfrac{\min\stb{i,j}}{2}}-1}
{\mychoose{\min\stb{i,j}}{2m+1}} (-\beta)^{\min\stb{i,j}-(2m+1)} 
(\beta^2-4\alpha\gamma)^m}
      \bk{   \sum\limits_{m=0}^{\ceil{\sfrac{(n+1-\max\stb{i,j})}{2}}-1}
      {\mychoose{n-\max\stb{i,j}+1}{2m+1}}
         (-\beta)^{n-\max\stb{i,j}-2m} (\beta^2-4\alpha\gamma)^m}}
         {(2\alpha)^{\min\stb{0,j-i}} (2\gamma)^{\min\stb{0,i-j}}   
       \bk{ \sum\limits_{m=0}^{\ceil{\sfrac{(n+1)}{2}}-1}
       \mychoose{n+1}{2m+1}(-\beta)^{n-2m}(\beta^2-4\alpha\gamma)^m}}
         \end{equation}

provided that $r\neq s$. 
\end{rem}

\begin{example}
\label{ex:res:general}
Let $\beta\ge 2$,
set 
 \begin{equation} 
\label{eq:trid:lapl} 
M =% \left[ 
\begin{pmatrix}
\beta &-1 & & 0\\
-1 & \ddots & \ddots & \\
& \ddots & \ddots &-1\\
0 & &-1& \beta 
\end{pmatrix}\in\RR^{n\times n},% \right]
\end{equation}
set $r=\tfrac{1}{2}(\beta+\sqrt{\beta^2-4})$
 and set $s=\tfrac{1}{2}(\beta-\sqrt{\beta^2-4})$.
 Then $M$ is monotone and invertible.
Moreover, 
\begin{subequations}
\label{eq:triang:rs:lap:sp}
\begin{align}
r\neq s &\RA 
[M^{-1}]_{i,j}=\frac{(r^{\min\stb{i,j}}-s^{\min\stb{i,j}})(r^{n+1}
s^{\max\stb{i,j}}-r^{\max\stb{i,j}}s^{n+1})}{(r-s)(r^{n+1}-s^{n+1})},
\\
r= s &\RA  [M^{-1}]_{i,j}=\frac{
\min\stb{i,j}(n+1-\max\stb{i,j})}{n+1}.
\end{align}
\end{subequations}

Alternatively, define the recurrence relations
\begin{subequations}
\label{subeq:recurr:sym:sp}
\begin{align}
u_0=0,&&u_1=1,&&u_k=\beta u_{k-1}- u_{k-2},&& k\ge 2,\\
v_{n+1}=0,&&v_n=1,&&v_k=\beta v_{k+1}-v_{k+2},&& k\le n-1.
\end{align}
\end{subequations}
Then 
\begin{equation}
\label{eq:triang:recur:sym:sp}
 [M^{-1}]_{i,j}
 =-\frac{u_{\min\stb{i,j}}v_{\max\stb{i,j}}}{v_0}.
\end{equation}
\end{example}
\begin{proof}
The monotonicity of $M $ follows from
\cref{prop:tri:mono:ch}  by noting that 
$\beta\ge 2> 2\cos(\pi/(n+1))$. 
The same argument implies that
\begin{equation}
0\not\in{\Lambda}
  =\Menge{\beta-2
  \cos\big(\tfrac{k\pi}{n+1}\big)}{k\in \stb{1,2,\ldots,n}}.
  \end{equation}
Hence $M$ is invertible by \cref{prop:trid:inv}\ref{prop:trid:inv:ii}.
Note that $\beta=2\siff \beta^2-4=0\siff r=s=1$.
Now apply \cref{prop:trid:inv}\ref{prop:trid:inv:ii}.
\end{proof}

Let
$M_1=[\alpha_{i,j}]_{i,j=1}^{n}\in \RR^{n\times n}
$ and $M_2=[\beta_{i,j}]_{i,j=1}^{n}\in \RR^{n\times n}$.
Recall that the \emph{Kronecker product}
of $M_1$ and $M_2$ (see, e.g., \cite[page~407]{Lanc85}
or \cite[Exercise~7.6.10]{Meyer2000})
is defined by the block matrix
\begin{equation}
M_1\otimes M_2=[\alpha_{i,j}M_2]\in \RR^{n^2\times n^2}.
\end{equation}

\begin{lem}
\label{lem:kp:symm}
Let $M_1$ and $M_2$  be
symmetric matrices in $ \RR^{n\times n}$.
Then $M_1\otimes M_2\in  \RR^{n^2\times n^2}$
is symmetric.
\end{lem}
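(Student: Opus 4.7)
The plan is to verify symmetry entrywise, using the standard index-unfolding that underlies the Kronecker block structure. Writing $M_1 = [\alpha_{i,j}]$ and $M_2 = [\beta_{k,l}]$, each entry of $M_1 \otimes M_2$ can be labeled by a pair of indices $((i-1)n+k,\, (j-1)n+l)$ with $i,j,k,l \in \{1,\ldots,n\}$, and the corresponding entry is $\alpha_{i,j}\,\beta_{k,l}$.

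First I would record this explicit entry formula, then apply the transpose: the $((i-1)n+k,\, (j-1)n+l)$ entry of $(M_1\otimes M_2)\tran$ is the $((j-1)n+l,\,(i-1)n+k)$ entry of $M_1\otimes M_2$, which equals $\alpha_{j,i}\,\beta_{l,k}$. Invoking $M_1\tran = M_1$ and $M_2\tran = M_2$, i.e.\ $\alpha_{j,i}=\alpha_{i,j}$ and $\beta_{l,k}=\beta_{k,l}$, gives $\alpha_{j,i}\beta_{l,k}=\alpha_{i,j}\beta_{k,l}$, matching the original entry. Since $i,j,k,l$ were arbitrary, $(M_1\otimes M_2)\tran = M_1\otimes M_2$.

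Equivalently, and perhaps more cleanly, I could just cite (or briefly prove via the same entry computation) the general identity $(M_1\otimes M_2)\tran = M_1\tran \otimes M_2\tran$ and then apply symmetry of each factor to collapse the right-hand side back to $M_1\otimes M_2$.

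There is no real obstacle here: the statement is essentially a bookkeeping consequence of the definition of the Kronecker product, and the only care needed is to keep the double-index notation straight between the ``outer'' indices $(i,j)$ selecting a block and the ``inner'' indices $(k,l)$ selecting an entry within that block. I would therefore keep the proof to a couple of lines.
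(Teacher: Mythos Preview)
Your proposal is correct and essentially matches the paper's proof: the paper simply cites the standard identity $(M_1\otimes M_2)\tran = M_1\tran \otimes M_2\tran$ (from Meyer or Lancaster--Tismenetsky) and then uses symmetry of $M_1$ and $M_2$, which is exactly your second (``cleaner'') approach, while your entrywise verification is just an explicit proof of that cited identity.
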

\begin{proof}
Using \cite[Exercise~7.8.11(a)]{Meyer2000} or 
\cite[Proposition~1(e)~on~page~408]{Lanc85}
we have $(M_1\otimes M_2){\tran}
=M_1{\tran}\otimes M_2{\tran}=M_1\otimes M_2$.
\end{proof}

The following fact is very useful in the conclusion of the upcoming
results.
\begin{fact}
\label{F:eigenv:prod}
Let $M_1$ and $M_2$ be in $\RR^{n\times n}$, 
with eigenvalues $\menge{\lam_k}{k\in\stb{1,\ldots, n}}$
 and $\menge{\mu_k}{k\in\stb{1,\ldots, n}}$. 
 Then the eigenvalues of $M_1\otimes M_2$
 are $\menge{\lam_j\mu_k}{j,k\in\stb{1,\ldots, n}}$.
 \end{fact}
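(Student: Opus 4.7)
The plan is to reduce to the upper-triangular case via Schur's triangularization theorem and then simply read off the diagonal of the resulting Kronecker product. First, invoke Schur's theorem over $\mathbb{C}$ to obtain unitary matrices $U_1,U_2\in\mathbb{C}^{n\times n}$ and upper-triangular matrices $T_1,T_2$ with $M_i=U_iT_iU_i^*$, arranged so that the diagonal of $T_i$ lists the eigenvalues $\{\lambda_k\}$ of $M_1$ and $\{\mu_k\}$ of $M_2$ respectively. Next, apply the mixed-product identity $(AB)\otimes(CD)=(A\otimes C)(B\otimes D)$ (see, e.g., \cite[Exercise~7.8.11]{Meyer2000} or \cite[Proposition~1~on~page~408]{Lanc85}) twice to obtain
\begin{equation}
M_1\otimes M_2=(U_1\otimes U_2)(T_1\otimes T_2)(U_1^*\otimes U_2^*).
\end{equation}
Because $(U_1\otimes U_2)^*=U_1^*\otimes U_2^*$ and the Kronecker product of unitaries is unitary, this exhibits $M_1\otimes M_2$ as unitarily similar to $T_1\otimes T_2$, so both matrices carry the same spectrum with multiplicities.

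It remains to inspect $T_1\otimes T_2$. A direct look at the block form shows that the Kronecker product of two upper-triangular matrices is itself upper-triangular, with diagonal entries consisting of all pairwise products of the diagonal entries of the factors. Thus the diagonal of $T_1\otimes T_2$ is precisely $\{\lambda_j\mu_k : j,k\in\{1,\ldots,n\}\}$, and since the eigenvalues of an upper-triangular matrix are its diagonal entries, the claim follows.

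The only mild obstacle is that the seemingly most natural argument — namely $(M_1\otimes M_2)(v_j\otimes w_k)=\lambda_j\mu_k(v_j\otimes w_k)$ whenever $M_1v_j=\lambda_jv_j$ and $M_2w_k=\mu_kw_k$ — only yields all $n^2$ eigenvalues of $M_1\otimes M_2$ when both $M_1$ and $M_2$ happen to be diagonalizable. Working with Schur forms sidesteps this limitation and treats the non-diagonalizable case uniformly, without requiring any extra perturbation or density argument.
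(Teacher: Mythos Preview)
Your argument is correct. Schur triangularization over $\mathbb{C}$, the mixed-product identity, and the observation that the Kronecker product of upper-triangular matrices is upper-triangular with diagonal entries $\lambda_j\mu_k$ together give a clean, complete proof that handles non-diagonalizable matrices without any perturbation argument.

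By contrast, the paper does not actually prove this fact: it simply cites \cite[Corollary~1~on~page~412]{Lanc85} and \cite[Exercise~7.8.11(b)]{Meyer2000}. So you have supplied a genuine self-contained proof where the paper merely points to the literature. Your Schur-based route is in fact one of the standard textbook arguments for this result (and is essentially what one finds in the cited sources), so there is no tension between your approach and the paper's intent. Your closing remark about why the eigenvector argument $(M_1\otimes M_2)(v_j\otimes w_k)=\lambda_j\mu_k(v_j\otimes w_k)$ is insufficient in the non-diagonalizable case is also a nice touch and correctly identifies the reason a triangular-form argument is preferable.
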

\begin{proof}
See
\cite[Corollary~1~on~page~412]{Lanc85}
or \cite[Exercise~7.8.11(b)]{Meyer2000}.
\end{proof} 

\begin{cor}
\label{cor:sym:kp:mono}
Let $M_1$ and $M_2$ 
in $\RR^{n\times n}$ be monotone such that $M_1$ or $M_2$ 
is symmetric. 
Then $M_1\otimes M_2$ is monotone.
\end{cor}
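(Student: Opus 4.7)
The plan is to use the symmetric-plus-transpose characterization of monotonicity from \cref{e:0329b} together with the eigenvalue formula for Kronecker products in \cref{F:eigenv:prod}. Without loss of generality, assume $M_2$ is symmetric; the argument when $M_1$ is symmetric is entirely analogous since $M_1 \otimes M_2$ and $M_2 \otimes M_1$ are similar via a permutation (or one can simply repeat the same steps).

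First I would rewrite the symmetric part of the Kronecker product. Using the standard identity $(M_1 \otimes M_2)\tran = M_1\tran \otimes M_2\tran$, together with $M_2\tran = M_2$, one obtains
\begin{equation}
(M_1 \otimes M_2) + (M_1 \otimes M_2)\tran
= M_1 \otimes M_2 + M_1\tran \otimes M_2
= (M_1 + M_1\tran) \otimes M_2.
\end{equation}
Here the last equality is just the bilinearity of the Kronecker product in its first argument.

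Next I would verify that this matrix is symmetric and list the relevant spectra. Since $M_1 + M_1\tran$ is symmetric and $M_2$ is symmetric by assumption, \cref{lem:kp:symm} applied to $(M_1 + M_1\tran, M_2)$ shows that $(M_1 + M_1\tran) \otimes M_2$ is symmetric. Because $M_1$ is monotone, \cref{e:0329b} tells us that $M_1 + M_1\tran$ is positive semidefinite, so its eigenvalues lie in $\RP$. Similarly, $M_2$ is symmetric and monotone, so (again by \cref{e:0329b}, noting that for symmetric matrices monotonicity coincides with positive semidefiniteness) its eigenvalues also lie in $\RP$.

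Finally I would invoke \cref{F:eigenv:prod} to conclude. The eigenvalues of $(M_1 + M_1\tran) \otimes M_2$ are all products $\lambda_j \mu_k$ with $\lambda_j$ an eigenvalue of $M_1 + M_1\tran$ and $\mu_k$ an eigenvalue of $M_2$; each such product lies in $\RP$. Combined with symmetry, this proves that $(M_1 + M_1\tran) \otimes M_2$ is positive semidefinite, i.e., $\tfrac{1}{2}\big((M_1 \otimes M_2) + (M_1 \otimes M_2)\tran\big)$ is positive semidefinite, so by \cref{eq:ch:mono:symp} the matrix $M_1 \otimes M_2$ is monotone. There is no real obstacle here; the only thing to be a little careful about is that $M_1$ itself need not be symmetric, which is precisely why we pass through $M_1 + M_1\tran$ before taking the Kronecker product.
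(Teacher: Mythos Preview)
Your proof is correct and follows essentially the same route as the paper: both arguments compute the symmetric part of $M_1\otimes M_2$ using $(A\otimes B)\tran=A\tran\otimes B\tran$ and bilinearity, then invoke \cref{F:eigenv:prod} together with \cref{e:0329b} to conclude. The only cosmetic difference is that the paper handles the case $M_1$ symmetric first (obtaining $M_1\otimes(M_2+M_2\tran)$), whereas you take $M_2$ symmetric and arrive at $(M_1+M_1\tran)\otimes M_2$.
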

\begin{proof}
According to \cref{e:0329b}, 
it is suffices to show that all the eigenvalues of  
$M_1\otimes M_2+(M_1\otimes M_2)\tran$
are nonnegative.
Suppose first that $M_1$ is symmetric.
Then using \cite[Proposition~1(e)\&(c)]{Lanc85} we
have $M_1\otimes M_2+(M_1\otimes M_2)\tran
=M_1\otimes M_2+M_1\otimes M_2\tran
=M_1\otimes(M_2+M_2\tran)$.
Since $M_2$ is monotone, 
it follows from \cref{e:0329b}
that all the eigenvalues of  $M_2+M_2\tran$
are nonnegative.
Now apply \cref{F:eigenv:prod} to $M_1$
 and $M_2+M_2\tran$  to learn that
  all the eigenvalues of 
 $M_1\otimes M_2+(M_1\otimes M_2)\tran$
are nonnegative, hence $M_1\otimes M_2$
 is monotone by \cref{eq:mono:ev}.
 A similar argument applies if $M_2$ is monotone.
\end{proof}

Note that the assumption that  at least one
matrix is 
symmetric is critical in 
\cref{cor:sym:kp:mono}, as we  show in the next example.
\begin{ex}
Suppose that 
\begin{equation}
M=
\begin{pmatrix}
0&-1\\
1
&0
\end{pmatrix}.
\end{equation}
Then $M$ is monotone,
with eigenvalues $\{\pm\mathrm{i}\}$, 
but \emph{not symmetric}. However, 
\begin{equation}
M\otimes M= 
\begin{pmatrix}
0&0&0&1\\
0&0&-1&0\\
0&-1&0&0\\
1&0&0&0
\end{pmatrix}.
\end{equation} 
is a symmetric matrix 
with eigenvalues $\stb{\pm 1}$
by \cref{F:eigenv:prod}.
Therefore 
$M\otimes M$ is 
 \emph{not monotone}
 by \cref{e:0329b}. 
\end{ex}

\begin{prop}
\label{prop:mono:M:kp:Id}
Let $M\in\RR^{n\times n}$ be symmetric.
Then  $\Id\otimes M$ is monotone
$\siff$ $M\otimes \Id$ is monotone
$\siff$ $M$ is monotone,
in which case we have
\begin{equation}
\label{eq:res:kp}
J_{\Id_n\otimes M}=\Id_{n}\otimes J_M\quad \text{and } 
\quad
J_{M\otimes \Id_n}=J_M\otimes\Id_{n}.
\end{equation}
\end{prop}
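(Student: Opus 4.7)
The plan is to handle the monotonicity equivalences first and then derive the resolvent formulas from the basic Kronecker calculus. Both tasks rest on standard identities for the Kronecker product.

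For the equivalences, I would first note that since $M$ is symmetric, both $\Id_n\otimes M$ and $M\otimes\Id_n$ are symmetric by \cref{lem:kp:symm}. Therefore, by the characterization in \cref{e:0329b}, each of the three monotonicity statements is equivalent to the corresponding matrix having all eigenvalues in $\RP$. Now apply \cref{F:eigenv:prod} with one of the factors being $\Id_n$: the eigenvalues of $\Id_n\otimes M$ are exactly the eigenvalues of $M$ (each appearing with multiplicity $n$), since the eigenvalues of $\Id_n$ are all equal to $1$. The same holds for $M\otimes \Id_n$. Thus the three eigenvalue conditions coincide, and the equivalence follows.

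For the resolvent identities, I would use the two standard Kronecker product identities $(A_1\otimes B_1)(A_2\otimes B_2)=(A_1A_2)\otimes(B_1B_2)$ and $\Id_n\otimes\Id_n=\Id_{n^2}$. Starting from the definition,
\begin{equation}
\Id_{n^2}+\Id_n\otimes M=\Id_n\otimes\Id_n+\Id_n\otimes M=\Id_n\otimes(\Id_n+M).
\end{equation}
Since $\Id_n+M$ is invertible (being symmetric with eigenvalues in $[1,\infty)$ when $M$ is monotone, more generally invertible whenever $J_M$ exists), and since $(A\otimes B)^{-1}=A^{-1}\otimes B^{-1}$ whenever $A$ and $B$ are invertible, we obtain
\begin{equation}
J_{\Id_n\otimes M}=(\Id_n\otimes(\Id_n+M))^{-1}=\Id_n\otimes(\Id_n+M)^{-1}=\Id_n\otimes J_M.
\end{equation}
The derivation of $J_{M\otimes\Id_n}=J_M\otimes\Id_n$ is entirely analogous, factoring as $\Id_{n^2}+M\otimes\Id_n=(\Id_n+M)\otimes\Id_n$ and inverting.

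There is no real obstacle: the only mildly delicate point is making sure to justify that $(A\otimes B)^{-1}=A^{-1}\otimes B^{-1}$ (which is a standard fact, e.g., \cite[Proposition~1]{Lanc85} or \cite[Exercise~7.8.11]{Meyer2000}) and that $\Id_n+M$ is invertible when $M$ is monotone and symmetric, which is immediate since its eigenvalues lie in $[1,\infty)$. The argument is essentially a direct assembly of \cref{lem:kp:symm}, \cref{F:eigenv:prod}, \cref{e:0329b}, and the multiplicativity/invertibility properties of the Kronecker product.
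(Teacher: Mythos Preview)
Your proposal is correct and follows essentially the same approach as the paper: both use \cref{lem:kp:symm} to get symmetry, \cref{F:eigenv:prod} to identify the eigenvalue sets, and \cref{e:0329b} for the monotonicity equivalence, then derive the resolvent formula by factoring $\Id_{n^2}+\Id_n\otimes M=\Id_n\otimes(\Id_n+M)$ and inverting via the Kronecker-product inverse identity (the paper cites \cite[Proposition~1(c) and Corollary~1(b)]{Lanc85} for these same steps).
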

\begin{proof}
In view of \cref{F:eigenv:prod}
the sets of eigenvalues of $\Id\otimes M$,
$M\otimes \Id$, and 
$M$ coincide.
It follows from \cref{lem:kp:symm} that $\Id\otimes M$
 and $M\otimes \Id$ are symmetric.
Now apply
\cref{eq:mono:ev} and use the monotonicity of
$M$.
To prove \cref{eq:res:kp}, we use
\cite[Proposition~1(c)~on~page~408]{Lanc85}
to learn that $\Id_{n^2}+\Id_n\otimes M
=\Id_n\otimes \Id_n+\Id_n\otimes M=
\Id_n\otimes (\Id_n+M)$.
Therefore, by \cite[Corollary~1(b)~on~page~408]{Lanc85}
 we have 
 \begin{align}
 J_{\Id_n\otimes M}&=(\Id_{n^2}+\Id_n\otimes M)^{-1}
 =(\Id_n\otimes (\Id_n+M))^{-1}
 =\Id_n\otimes (\Id_n+M)^{-1}
 \nonumber\\
 &=\Id_{n}\otimes J_M.
 \end{align}
The other identity in \cref{eq:res:kp} is proved similarly.
\end{proof}

\begin{cor}
\label{cor:res:general}
Let $\beta\in \RR$. 
Set 
\begin{equation} 
\label{eq:trid:kp:ID}
\Mbeta{\beta} =% \left[ 
\begin{pmatrix}
\beta  & -1 & & 0\\
-1 & \ddots & \ddots & \\
& \ddots & \ddots & -1 \\
0 & & -1& \beta
\end{pmatrix}\in \RR^{n\times n},
\end{equation}
and let ${\bf M}_{\rightarrow}$ and 
 ${\bf M}_{\uparrow}$ be block matrices 
 in $\RR^{n^2\times n^2}$ defined by
\begin{equation}
 \label{eq:trid:def:rl:Kp}
 {\bf M}_{\rightarrow} =% \left[ 
\begin{pmatrix}
\Mbeta{\beta} & 0_{n} & & 0_{n}\\
0_{n} & \ddots & \ddots & \\
& \ddots & \ddots & 0_{n} \\
0_{n} & &0_{n} & \Mbeta{\beta}
\end{pmatrix}
 \quad\text{and}\quad
 {\bf M}_{\uparrow} =% \left[ 
\begin{pmatrix}
 \beta\Id_{n} & -\Id_{n} & & 0_{n}\\
-\Id_{n} & \ddots & \ddots & \\
& \ddots & \ddots & -\Id_{n} \\
0_{n} & & -\Id_{n} & \beta\Id_{n}
\end{pmatrix}
%
%\quad
.% \right]
\end{equation}
Then ${\bf M}_{\rightarrow}=\Id_{n}\otimes \Mbeta{\beta}$ and
${\bf M}_{\uparrow}= \Mbeta{\beta}\otimes\Id_{n}$.
Moreover, 
${\bf M}_{\rightarrow}$ is monotone $\siff$ ${\bf M}_{\uparrow}$
is monotone $\siff$ $\Mbeta{\beta} $ is monotone
$\siff$
$\beta\ge 2\cos(\pi/(n+1))$, in which case
\begin{equation}
\label{eq:resol:beta}
J_{{\bf M}_{\rightarrow}}= \Id_{n}\otimes \Mbeta{\beta+1}^{-1}
\quad
\text{and}
\quad
J_{{\bf M}_{\uparrow}}=\Mbeta{\beta+1}^{-1} \otimes \Id_{n} .
\end{equation}
\end{cor}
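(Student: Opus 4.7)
The plan is to unpack the corollary into three quick verifications, each leveraging a result already in the paper.

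First, I would verify the Kronecker-product identities ${\bf M}_{\rightarrow}=\Id_{n}\otimes \Mbeta{\beta}$ and ${\bf M}_{\uparrow}= \Mbeta{\beta}\otimes\Id_{n}$ directly from the definition of the Kronecker product. For ${\bf M}_{\rightarrow}$, the matrix $\Id_n$ has $1$'s on its diagonal and $0$'s elsewhere, so $\Id_n\otimes \Mbeta{\beta}$ is block-diagonal with the block $\Mbeta{\beta}$ repeated $n$ times, matching \cref{eq:trid:def:rl:Kp}. For ${\bf M}_{\uparrow}$, the scalar entries of $\Mbeta{\beta}$ are $\beta$ on the diagonal and $-1$ on the sub- and super-diagonals, so $\Mbeta{\beta}\otimes \Id_n$ is block-tridiagonal with diagonal blocks $\beta\Id_n$ and off-diagonal blocks $-\Id_n$, again matching \cref{eq:trid:def:rl:Kp}.

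Second, for the monotonicity chain, I would observe that $\Mbeta{\beta}$ is symmetric, so \cref{prop:mono:M:kp:Id} applies and yields
\begin{equation*}
{\bf M}_{\rightarrow}~\text{monotone}
\siff
{\bf M}_{\uparrow}~\text{monotone}
\siff
\Mbeta{\beta}~\text{monotone}.
\end{equation*}
The equivalence with $\beta\ge 2\cos(\pi/(n+1))$ then comes from \cref{prop:tri:mono:ch} with $\alpha=\gamma=-1$, giving the threshold $|\alpha+\gamma|\cos(\pi/(n+1))=2\cos(\pi/(n+1))$.

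Third, for the resolvent formulas, the main observation is that $\Id_n+\Mbeta{\beta}=\Mbeta{\beta+1}$, which is immediate from \cref{eq:trid:kp:ID} since adding $\Id_n$ only shifts each diagonal entry of $\Mbeta{\beta}$ up by one. Hence $J_{\Mbeta{\beta}}=(\Id_n+\Mbeta{\beta})^{-1}=\Mbeta{\beta+1}^{-1}$ (invertibility being guaranteed under the monotonicity hypothesis because $\beta+1\ge 1+2\cos(\pi/(n+1))>2\cos(\pi/(n+1))$, so \cref{prop:trid:inv}\ref{prop:trid:inv:ii} applies to $\Mbeta{\beta+1}$). Now applying \cref{eq:res:kp} from \cref{prop:mono:M:kp:Id} to the symmetric matrix $\Mbeta{\beta}$ produces
\begin{equation*}
J_{{\bf M}_{\rightarrow}}=J_{\Id_n\otimes \Mbeta{\beta}}=\Id_n\otimes J_{\Mbeta{\beta}}=\Id_n\otimes \Mbeta{\beta+1}^{-1},
\end{equation*}
and analogously $J_{{\bf M}_{\uparrow}}=\Mbeta{\beta+1}^{-1}\otimes\Id_n$, completing the proof.

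There is no genuine obstacle here; every ingredient (Kronecker-product rules, the symmetric-factor monotonicity criterion, the tridiagonal monotonicity characterization, and the explicit invertibility) has already been established earlier in the paper. The only thing to be careful about is checking that $\beta+1$ still satisfies the strict form of the invertibility condition from \cref{prop:trid:inv}\ref{prop:trid:inv:ii} so that $\Mbeta{\beta+1}^{-1}$ genuinely exists whenever $\Mbeta{\beta}$ is monotone.
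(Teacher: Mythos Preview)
Your proposal is correct and follows essentially the same route as the paper's proof: verify the Kronecker identities directly, invoke \cref{prop:mono:M:kp:Id} and \cref{prop:tri:mono:ch} for the monotonicity equivalences, observe $\Id_n+\Mbeta{\beta}=\Mbeta{\beta+1}$, and apply \cref{eq:res:kp} for the resolvents. The only difference is that you spell out the invertibility of $\Mbeta{\beta+1}$ explicitly via \cref{prop:trid:inv}\ref{prop:trid:inv:ii}, which the paper leaves implicit.
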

\begin{proof}
It is straightforward to verify that
${\bf M}_{\rightarrow}=\Id_{n}\otimes \Mbeta{\beta}$
and
${\bf M}_{\uparrow}= \Mbeta{\beta}\otimes\Id_{n}$ .
It follows from \cref{prop:tri:mono:ch}
that $\Mbeta{\beta}$ is monotone 
$\siff $ 
$\beta\ge 2\cos\bk{\tfrac{\pi}{n+1}}$.
Now combine with \cref{prop:mono:M:kp:Id}.
To prove \cref{eq:resol:beta} note that 
$\Id_n+\Mbeta{\beta}=\Mbeta{\beta+1}$, and 
therefore $J_{\Mbeta{\beta}}=\Mbeta{\beta+1}^{-1}$.
The conclusion follows
by applying \cref{eq:res:kp}.
\end{proof}

The above matrices play a key role in the original design of the
Douglas-Rachford algorithm --- see the Appendix for details. 

\begin{prop}
\label{prop:mono:kp:M}
Let $n\in\{2,3,\ldots\}$, let 
$M\in \RR^{n\times n}$ and consider the block matrix
\begin{equation} 
\label{eq:trid:block}
{\bf M}=% \left[ 
\begin{pmatrix}
M & -\Id_{n} & & 0_{n}\\
-\Id_{n} & \ddots & \ddots & \\
& \ddots & \ddots & -\Id_{n} \\
0_{n} & & -\Id_{n} & M
\end{pmatrix}.
\end{equation}
Let $x=(x_1,x_2,\ldots,x_n)\in \RR^{n^2}$,
where $x_i\in \RR^n$, $i\in \stb{1,2,\ldots n}$.
Then
\begin{equation}
\label{eq:mono:kp:M}
\begin{split}
\innp{x,{\bf M} x}=
\innp{x_1,(M-\Id)x_1}+\sum_{k=2}^{n-1}\innp{x_k,(M-2\Id)x_k}\\
+\innp{x_n,(M-\Id)x_n}
+\sum_{i=1}^{n-1}\normsq{x_i-x_{i+1}}.
\end{split}
\end{equation}
Moreover the following hold:
\begin{enumerate}
\item 
\label{prop:mono:kp:M:0}
Suppose that $n=2$. Then
$M- \Id$ is monotone $ \siff {\bf M}$
is monotone.
\item 
\label{prop:mono:kp:M:i}
$M-2\Id$ is monotone $ \RA$ ${\bf M}$
is monotone. 
\item  
\label{prop:mono:kp:M:ii}
${\bf M}$
is monotone $\RA $ $M-2(1-\tfrac{1}{n})\Id$ is monotone $\RA$
$M$ is monotone.
\end{enumerate}
\end{prop}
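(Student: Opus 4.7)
The plan is to first prove the quadratic form identity and then use it as a tool to derive each of the three implications by choosing appropriate test vectors.

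\textbf{Step 1 (the identity).} I expand $\innp{x,{\bf M}x}$ directly: by the block structure of ${\bf M}$,
\begin{equation*}
\innp{x,{\bf M}x}=\sum_{i=1}^{n}\innp{x_i,Mx_i}-2\sum_{i=1}^{n-1}\innp{x_i,x_{i+1}}.
\end{equation*}
Substituting $-2\innp{x_i,x_{i+1}}=\normsq{x_i-x_{i+1}}-\normsq{x_i}-\normsq{x_{i+1}}$ into the cross-term sum and collecting $\normsq{x_i}$ terms gives a coefficient of $-1$ on $\normsq{x_1}$ and $\normsq{x_n}$ and of $-2$ on each interior $\normsq{x_i}$; absorbing these into the $M$-terms produces exactly \cref{eq:mono:kp:M}.

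\textbf{Step 2 (items \ref{prop:mono:kp:M:0} and \ref{prop:mono:kp:M:i}).} In view of \cref{e:0329b}, $M$-type blocks are monotone iff their quadratic form is nonnegative. For \ref{prop:mono:kp:M:i}, if $M-2\Id$ is monotone, then $M-\Id=(M-2\Id)+\Id$ is monotone as well, and \cref{eq:mono:kp:M} expresses $\innp{x,{\bf M}x}$ as a sum of three clearly nonnegative terms (the two boundary contributions, the interior contributions, and the telescoping squared norms). For \ref{prop:mono:kp:M:0} with $n=2$ the sum over $k$ from $2$ to $n-1$ is empty, so
\begin{equation*}
\innp{x,{\bf M}x}=\innp{x_1,(M-\Id)x_1}+\innp{x_2,(M-\Id)x_2}+\normsq{x_1-x_2}.
\end{equation*}
The ``$\Leftarrow$'' direction is immediate; for ``$\Rightarrow$'' I plug in the diagonal test vector $x=(y,y)$, which kills the square term and yields $2\innp{y,(M-\Id)y}\geq 0$ for every $y\in\RR^n$.

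\textbf{Step 3 (item \ref{prop:mono:kp:M:ii}, the main obstacle).} The delicate direction is ${\bf M}\text{ monotone}\Rightarrow M-2(1-\tfrac{1}{n})\Id\text{ monotone}$, because the constant $2(1-1/n)$ has to be produced from the identity. The trick is again to test on the ``diagonal'' $x_i=y$ for all $i$, so that $\sum_{i=1}^{n-1}\normsq{x_i-x_{i+1}}=0$ and the sum in \cref{eq:mono:kp:M} collapses to
\begin{equation*}
2\innp{y,(M-\Id)y}+(n-2)\innp{y,(M-2\Id)y}=n\innp{y,(M-2(1-\tfrac{1}{n})\Id)y}.
\end{equation*}
Nonnegativity of the left-hand side (from ${\bf M}$ monotone) for every $y$ gives monotonicity of $M-2(1-\tfrac{1}{n})\Id$ via \cref{e:0329b}. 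The final implication is routine: since $n\geq 2$ we have $2(1-1/n)\geq 0$, so $M=(M-2(1-\tfrac{1}{n})\Id)+2(1-\tfrac{1}{n})\Id$ is a sum of two monotone operators and hence monotone.
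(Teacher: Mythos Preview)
Your proof is correct and follows essentially the same route as the paper: expand the quadratic form, rewrite each cross term via $-2\innp{x_i,x_{i+1}}=\normsq{x_i-x_{i+1}}-\normsq{x_i}-\normsq{x_{i+1}}$, and for the converse directions test on the diagonal vector $x=(y,\ldots,y)$. One small slip in \ref{prop:mono:kp:M:0}: your arrow labels are swapped --- the direction that is ``immediate'' from the displayed identity is $\Rightarrow$ (i.e., $M-\Id$ monotone $\Rightarrow{\bf M}$ monotone), while the diagonal test vector $x=(y,y)$ yields $\Leftarrow$.
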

\begin{proof}
%Let $(x_1,x_2,\ldots,x_n)\in \RR^{n^2}$,
%where $x_i\in \RR^n$, $i\in \stb{1,2,\ldots,n}$.
%Then 
We have
%\begin{subequations}
\begin{align}
\innp{x,{\bf M} x}
&= \innp{(x_1,x_2,\ldots,x_n),(Mx_1-x_2,-x_1+Mx_2-x_3, \ldots, -x_{n-1}+Mx_n )}
\label{eq:needed:seq}\nonumber\\
&= \innp{x_1, Mx_1}-\innp{x_1,x_2}-\innp{x_1,x_2}
+\innp{x_2, Mx_2}-\innp{x_2,x_3}\nonumber\\
&\qquad -\innp{x_2,x_3}+\innp{x_3, Mx_3}-\ldots
-\innp{x_2,x_3}+\innp{x_n, Mx_n}-\innp{x_{n-1},x_n}
\nonumber\\
&= \innp{x_1, Mx_1}-2\innp{x_1,x_2}
+\innp{x_2, Mx_2}-2\innp{x_2,x_3}\nonumber\\
&\qquad-\ldots-2\innp{x_2,x_3}+\innp{x_n, Mx_n}-2\innp{x_{n-1},x_n}\nonumber\\
&= \innp{x_1, Mx_1}+\normsq{x_1-x_2}-\normsq{x_1}-\normsq{x_2}+\innp{x_2, Mx_2}
+\normsq{x_2-x_3}
\nonumber\\
&\qquad-\normsq{x_2}-\normsq{x_3}+\ldots
+\normsq{x_{n-1}-x_n}-\normsq{x_{n-1}}-\normsq{x_n}+\innp{x_n, Mx_n}\nonumber\\
&= \innp{x_1, Mx_1}-\normsq{x_1}+\innp{x_2, Mx_2}-2\normsq{x_2}+\ldots
+\innp{x_n, Mx_n}-\normsq{x_n}+\ldots \nonumber\\
&\qquad +\normsq{x_1-x_2}+\normsq{x_2-x_3}+\ldots+\normsq{x_{n-1}-x_n}\nonumber\\
&=\innp{x_1,(M-\Id)x_1}+\bigg(\sum_{k=2}^{n-1}\innp{x_k,(M-2\Id)x_k}\bigg)
+\innp{x_n,(M-\Id)x_n}\nonumber\\
&\qquad+\sum_{i=1}^{n-1}\normsq{x_i-x_{i+1}}.\nonumber
%\label{eq:needed:seq:ii}
\end{align}
%\end{subequations}
\ref{prop:mono:kp:M:0}:
``$\RA$":
Apply \cref{eq:mono:kp:M} with $n=2$.
``$\LA$":
Let $y\in \RR^2$. Applying \cref{eq:mono:kp:M}
to the point $x=(y,y)\in \RR^4$,  we get 
$0\le \innp{x,{\bf M}x}=2\innp{y,(\Id-M)y}$.
\ref{prop:mono:kp:M:i}:
This is clear from \cref{eq:mono:kp:M}.
\ref{prop:mono:kp:M:ii}:
Let $y\in \RR^n$. Applying \cref{eq:mono:kp:M}
to the point $x=(y,y,\ldots, y)\in \RR^{n^2}$ yields
$0\le \innp{x,{\bf M}x}=2\innp{y,(M-\Id)y}+(n-2)\innp{y,(M-2\Id)y}=
\innp{y,(nM-2(n-1)\Id)y}$. 
Therefore, $M-2(1-\tfrac{1}{n})\Id$ is monotone. 
\end{proof}

The converse of \cref{prop:mono:kp:M}\ref{prop:mono:kp:M:i} 
is not true in general, as we illustrate now.
\begin{example}
Set 
\begin{equation}
M=
\begin{pmatrix}
1&-1\\
1
&1
\end{pmatrix},
\end{equation}
and let ${\bf M}$ be as defined in
\cref{prop:mono:kp:M}. 
Then one verifies easily that 
 ${\bf M}$ 
is monotone while 
$M-2\Id$ is not.
\end{example}

We now show that the converse of the implications in
 \cref{prop:mono:kp:M}\ref{prop:mono:kp:M:ii} 
are not true in general.

\begin{example}
Set $n=2$, set $M=\tfrac{1}{2}\Id\in\RR^{2\times 2}$, 
and let ${\bf M}$ be as defined in
\cref{prop:mono:kp:M}..
Then $M$ is monotone
but $M-2(1-\tfrac{1}{2})\Id=-\tfrac{1}{2}\Id$ is not monotone, 
and 
\begin{equation}
{\bf M}=
\frac{1}{2}
\begin{pmatrix}
1&0&-2&0\\
0&1&0&-2\\
-2&0&1&0\\
0&-2&0&1
\end{pmatrix}.
\end{equation}
Note the ${\bf M}$ is symmetric and has eigenvalues
$\stb{-\sfrac{1}{2},\sfrac{3}{2}}$, hence ${\bf M}$ is not monotone 
by \cref{e:0329b}.
\end{example}

\subsection*{Acknowledgments}
HHB was partially supported by the Natural Sciences 
and Engineering Research Council of Canada 
and by the Canada Research Chair Program.

{\small 

}
\section*{Appendix}
In this section we briefly show the connection between the 
original Douglas-Rachford algorithm
introduced in \cite{DR56} (see also \cite{D55}, \cite{Milne53}
 and \cite{DR55} for variations of this method)
to solve certain 
types of heat equations
and the general algorithm 
introduced by Lions and Mercier
in \cite{L-M79} (see also \cite{Comb04}). 

Suppose that $\Omega$ is a bounded square region
in $\RR^2$.
Consider the Dirichlet
problem for the Poisson equation: 
Given $f$ and $g$, find $u:\Omega\to \RR$
such that  
\begin{empheq}[box=\mybluebox]{equation}
\label{eq:def:Dir:Lap}
 \Delta u=f \text{~on~}\Omega
 \quad\text{and}\quad u=g  \text{~on~}\bdry \Omega,
\end{empheq}
where  
$\Delta=\grad^2 =\frac{\pt^2 }{\pt x^2}+\frac{\pt^2 }{\pt y^2}$
is the Laplace operator and
 and $\bdry\Omega $ denotes 
 the boundary of $\Omega$.
Discretizing $u$ followed by converting it into a ``long vector"
$y$
(see \cite[Example~7.6.2~\&~Problem~7.6.9]{Meyer2000}) we obtain the system 
of linear equations
\begin{equation}
L_{\rightarrow}y+L_{\uparrow}y=-b.
\end{equation} 
Here $L_{\rightarrow}$ and $L_{\uparrow}$
denote the horizontal (respectively vertical) positive definite discretization
of the {negative} Laplacian over a square mesh with $n^2$
points at equally spaced intervals
(see, \cite[Problem~7.6.10]{Meyer2000}). We have 
\begin{equation}
L_{\rightarrow}=\Id\otimes M
\quad\text{and}\quad
L_{\uparrow}= M\otimes \Id, 
\end{equation}
where
\begin{equation} 
\label{eq:lap:build:block}
M =% \left[ 
\begin{pmatrix}
2  & -1 & & 0\\
-1 & \ddots & \ddots & \\
& \ddots & \ddots & -1 \\
0 & & -1&2
\end{pmatrix}\in \RR^{n\times n}.
\end{equation}
%and $b=h^2f-g$.
To see the connection to monotone operators,
set $A=L_{\rightarrow}$
and $B: L_{\uparrow}+b: y\mapsto L_{\uparrow}y+b$.
Then $A$ and $B$ are affine and 
strictly monotone.
The problem then reduces to 
\begin{equation}
\text{find $y\in \RR^{n^2}$ such
that  $Ay+By=0$},
\end{equation}
 and the algorithm proposed by
 Douglas and Rachford in \cite{DR56} 
becomes
\begin{subequations}
\begin{align}
y_{n+\sfrac{1}{2}}+Ay_n+By_{n+\sfrac{1}{2}}-y_n&=0,
\label{eq:sub:c:1}\\
y_{n+1}-y_{n+\sfrac{1}{2}}-Ay_{n}+Ay_{n+1}&= 0.
\label{eq:sub:c:2}
\end{align}
\end{subequations}
Consequently, 
\begin{subequations}
\begin{align}
%\begin{equation}
\cref{eq:sub:c:1} 
&~\siff~ (\Id+B)(y_{n+\sfrac{1}{2}})
=(\Id-A)y_n
~\siff~
y_{n+\sfrac{1}{2}}=J_B(\Id-A)y_n,
\label{eq:dr:to:lm:1}\\
%\end{equation}
%and 
%\begin{equation}
\cref{eq:sub:c:2} 
&~\siff~  (\Id+A) y_{n+1}=Ay_n+y_{n+\sfrac{1}{2}}
~\siff~ 
y_{n+1}=J_A(Ay_n+y_{n+\sfrac{1}{2}}).
\label{eq:dr:to:lm:2}
%\end{equation}
\end{align}
\end{subequations}
Substituting \cref{eq:dr:to:lm:1} into \cref{eq:dr:to:lm:2}
to eliminate $y_{n+\sfrac{1}{2}}$
yields
\begin{equation}
\label{eq:pre:ch:var}
y_{n+1}=J_A\big(Ay_n+J_B(\Id-A)y_n\big).
\end{equation}
To proceed further, we must show that
\begin{subequations}
\label{e:0329c}
\begin{align}
(\Id-A)J_A&=R_A\label{eq:sub:d:1}\\
AJ_A&=\Id-J_A.\label{eq:sub:d:2}
\end{align}
\end{subequations}
Indeed, note that
$\Id -A=2\Id-(\Id+A)$, therefore multiplying by $J_A=(\Id+A)^{-1}$
from the right yields
%\begin{equation}
$
(\Id-A)J_A=(2\Id-(\Id+A))J_A=2J_A-\Id =R_A$.
Hence $J_A-AJ_A=J_A-(\Id-J_A)$;
equivalently, $AJ_A=\Id-J_A$.
Now consider the change of variable
\begin{equation}
\label{eq:change:var}
(\forall \nnn)\quad x_n=(\Id+A)y_n,
\end{equation}
which is equivalent to
$y_n=J_A x_n$.
Substituting \cref{eq:pre:ch:var} into \cref{eq:change:var},
 and using \cref{e:0329c}, 
yield
\begin{align}
x_{n+1}&=(\Id+A)y_{n+1}=(\Id+A)J_A(Ay_n+J_B(\Id-A)y_n)
=Ay_n+J_B(\Id-A)y_n\nonumber\\
&=AJ_A x_n+J_B(\Id-A)J_A x_n=x_n-J_Ax_n+J_BR_Ax_n=(\Id-J_A+J_BR_A)x_n,
\end{align}
which is the Douglas-Rachford update formula \eqref{e:defofDR}.

We point out that $J_A=J_{L_{\rightarrow}}$,
 and using \cite[Proposition~23.15(ii)]{BC2011} 
 we have $J_B=J_{L_{\uparrow}+b}=J_{L_{\uparrow}}-J_{L_{\uparrow}}b$.
To calculate
$J_A$ and $J_B$ 
apply \cref{cor:res:general} 
to
 get
 \begin{equation}
J_A=\Id_n\otimes J_M
 \quad\text{and}\quad
 J_B
 =J_M\otimes\Id_n-(J_M\otimes\Id_n)(b). 
\end{equation}
For instance, when $n=3$, the above calculations yield
\begin{equation}
J_M=
\begin{pmatrix}
 \frac{8}{21} & \frac{1}{7} & \frac{1}{21} \\[+1mm]
 \frac{1}{7} & \frac{3}{7} & \frac{1}{7} \\[+1mm]
 \frac{1}{21} & \frac{1}{7} & \frac{8}{21} \\[+1mm]
\end{pmatrix}, 
\end{equation}
\begin{equation}
\Id_3\otimes J_M = \begin{pmatrix}
 \frac{8}{21} & \frac{1}{7} & \frac{1}{21} & 0 & 0 & 0 & 0 & 0 &
 0 \\[+1mm]
 \frac{1}{7} & \frac{3}{7} & \frac{1}{7} & 0 & 0 & 0 & 0 & 0 & 0
 \\[+1mm]
 \frac{1}{21} & \frac{1}{7} & \frac{8}{21} & 0 & 0 & 0 & 0 & 0 &
 0 \\[+1mm]
 0 & 0 & 0 & \frac{8}{21} & \frac{1}{7} & \frac{1}{21} & 0 & 0 &
 0 \\[+1mm]
 0 & 0 & 0 & \frac{1}{7} & \frac{3}{7} & \frac{1}{7} & 0 & 0 & 0
 \\[+1mm]
 0 & 0 & 0 & \frac{1}{21} & \frac{1}{7} & \frac{8}{21} & 0 & 0 &
 0 \\[+1mm]
 0 & 0 & 0 & 0 & 0 & 0 & \frac{8}{21} & \frac{1}{7} &
 \frac{1}{21} \\[+1mm]
 0 & 0 & 0 & 0 & 0 & 0 & \frac{1}{7} & \frac{3}{7} & \frac{1}{7}
 \\[+1mm]
 0 & 0 & 0 & 0 & 0 & 0 & \frac{1}{21} & \frac{1}{7} &
 \frac{8}{21} \\[+1mm]
\end{pmatrix},
\end{equation}
and 
\begin{equation}
J_M\otimes \Id_3 = 
\begin{pmatrix}
 \frac{8}{21} & 0 & 0 & \frac{1}{7} & 0 & 0 & \frac{1}{21} & 0 &
 0 \\[+1mm]
 0 & \frac{8}{21} & 0 & 0 & \frac{1}{7} & 0 & 0 & \frac{1}{21} &
 0 \\[+1mm]
 0 & 0 & \frac{8}{21} & 0 & 0 & \frac{1}{7} & 0 & 0 &
 \frac{1}{21} \\[+1mm]
 \frac{1}{7} & 0 & 0 & \frac{3}{7} & 0 & 0 & \frac{1}{7} & 0 & 0
 \\[+1mm]
 0 & \frac{1}{7} & 0 & 0 & \frac{3}{7} & 0 & 0 & \frac{1}{7} & 0
 \\[+1mm]
 0 & 0 & \frac{1}{7} & 0 & 0 & \frac{3}{7} & 0 & 0 & \frac{1}{7}
 \\[+1mm]
 \frac{1}{21} & 0 & 0 & \frac{1}{7} & 0 & 0 & \frac{8}{21} & 0 &
 0 \\[+1mm]
 0 & \frac{1}{21} & 0 & 0 & \frac{1}{7} & 0 & 0 & \frac{8}{21} &
 0 \\[+1mm]
 0 & 0 & \frac{1}{21} & 0 & 0 & \frac{1}{7} & 0 & 0 &
 \frac{8}{21} \\[+1mm]
\end{pmatrix}. 
\end{equation}

\end{document}